\titleformat{\chapter}[display]
{\normalfont\huge\bfseries}{\chaptertitlename\\thechapter}{20pt}{\Huge}
\titleformat{\subsubsection}[runin]
{\normalfont\normalsize\bfseries}{\thesubsubsection}{1em}{}
\titleformat{\paragraph}[runin]
{\normalfont\normalsize\bfseries}{\theparagraph}{1em}{}
\titleformat{\subparagraph}[runin]
{\normalfont\normalsize\bfseries}{\thesubparagraph}{1em}{}
\titlespacing*{\chapter} {0pt}{50pt}{40pt}
\titlespacing*{\section} {0pt}{3.5ex plus 1ex minus .2ex}{2.3ex plus .2ex}
\titlespacing*{\subsection} {0pt}{3.25ex plus 1ex minus .2ex}{1.5ex plus .2ex}
\titlespacing*{\subsubsection}{0pt}{3.25ex plus 1ex minus .2ex}{1.5ex plus .2ex}
\titlespacing*{\paragraph} {0pt}{3.25ex plus 1ex minus .2ex}{1em}
\titlespacing*{\subparagraph} {\parindent}{3.25ex plus 1ex minus .2ex}{1em}
\subjclass[2000]{Primary 16S35; Secondary 16W30}
\titleformat{\chapter}[display]
{\normalfont\huge\bfseries}{\chaptertitlename\\thechapter}{20pt}{\Huge}
\titleformat{\paragraph}[runin]
{\normalfont\normalsize\bfseries}{\theparagraph}{1em}{}
\titleformat{\subparagraph}[runin]
{\normalfont\normalsize\bfseries}{\thesubparagraph}{1em}{}
\titlespacing*{\chapter} {0pt}{50pt}{40pt}
\titlespacing*{\section} {0pt}{3.5ex plus 1ex minus .2ex}{2.3ex plus .2ex}
\titlespacing*{\subsection} {0pt}{3.25ex plus 1ex minus .2ex}{1.5ex plus .2ex}
\titlespacing*{\subsubsection}{0pt}{3.25ex plus 1ex minus .2ex}{1.5ex plus .2ex}
\titlespacing*{\paragraph} {0pt}{3.25ex plus 1ex minus .2ex}{1em}
\titlespacing*{\subparagraph} {\parindent}{3.25ex plus 1ex minus .2ex}{1em}
\newtheorem{theorem}{Theorem}[section]
\newtheorem{lemma}[theorem]{Lemma}
\newtheorem{proposition}[theorem]{Proposition}
\newtheorem{corollary}[theorem]{Corollary}
\theoremstyle{definition}
\newtheorem{definition}[theorem]{Definition}
\theoremstyle{remark}
\newtheorem{remark}[theorem]{Remark}
\DeclareMathOperator{\Aut}{Aut}
\DeclareMathOperator{\Supp}{Supp}
\DeclareMathOperator{\wdeg}{wdeg}
\newcommand{\ov}{\overline}
\newcommand{\wt}{\widetilde}
\newcommand{\BigFig}[1]{\parbox{12pt}{\Huge #1}}
\newcommand{\BigZero}{\BigFig{0}}
\begin{document}

\title{A system of polynomial equations related to the Jacobian Conjecture}

\author{Jorge A. Guccione}
\address{Departamento de Matem\'atica\\ Facultad de Ciencias Exactas y Naturales-UBA,
Pabell\'on~1-Ciudad Universitaria\\ Intendente Guiraldes 2160 (C1428EGA) Buenos Aires, Argentina.}
\address{Instituto de Investigaciones Matem\'aticas ``Luis A. Santal\'o"\\ Facultad de
Ciencias Exactas y Natu\-ra\-les-UBA, Pabell\'on~1-Ciudad Universitaria\\ Intendente
Guiraldes 2160 (C1428EGA) Buenos Aires, Argentina.}
\email{vander@dm.uba.ar}

\author{Juan J. Guccione}
\address{Departamento de Matem\'atica\\ Facultad de Ciencias Exactas y Naturales-UBA\\
Pabell\'on~1-Ciudad Universitaria\\ Intendente Guiraldes 2160 (C1428EGA) Buenos Aires, Argentina.}
\address{Instituto Argentino de Matem\'atica-CONICET\\ Savedra 15 3er piso\\ (C1083ACA)
Buenos Aires, Argentina.}
\email{jjgucci@dm.uba.ar}
\thanks{Jorge A. Guccione and Juan J. Guccione were supported by CONICET PIP 2021-2023 GI,11220200100423CO}

\author[C. Valqui]{Christian Valqui}
\address{Pontificia Universidad Cat\'olica del Per\'u, Secci\'on Matem\'aticas, PUCP,
Av. Universitaria 1801, San Miguel, Lima 32, Per\'u.}
\address{Instituto de Matem\'atica y Ciencias Afines (IMCA) Calle Los Bi\'ologos 245.
Urb San C\'esar. La Molina, Lima 12, Per\'u.}
\email{cvalqui@pucp.edu.pe}
\thanks{Christian Valqui was supported by PUCP-CAP 2023-PI0991}

\subjclass[2010]{primary 14R15; secondary 13P15, 13F20}
\keywords{Jacobian conjecture, polynomial system}

\begin{abstract} We prove that the Jacobian conjecture is false if and only if there exists
a solution to a certain system of polynomial equations. We analyse the solution set of this
system. In particular we prove that it is zero dimensional.
\end{abstract}


\maketitle

\section*{Introduction}
Let $K$ be a characteristic zero field. The Jacobian Conjecture (JC) in dimension two, stated
by Keller in \cite{K}, says that any pair of polynomials $P,Q\in R:= K[x,y]$ with
$$
[P,Q]:=\partial_x P \partial_y Q - \partial_x Q \partial_y P\in K^{\times}
$$
defines an automorphism of $R$.

T. T. Moh analyses in \cite{M} the existence of possible counterexamples $(P,Q)$ with total
degree of $P$ and $Q$ lower than 101 and finds four exceptional cases $(m,n)=(48,64)$,
$(m,n)=(50,75)$, $(m,n)=(56,84)$ or $(m,n)=(66,99)$, where $(n,m)=(\deg(P),\deg(Q))$. Then
he discards these cases by hand solving certain Ad Hoc systems of equations for the
coefficients of the possible counterexamples. Motivated by this we introduce and begin the
study of a polynomial system~$S_t(n,m,(\lambda_i),F_{1-n})$ of $m+n-2$ equations with
coefficients in a commutative $K$-algebra $D$ and $m+n-2$ variables. Here
$(\lambda_i)_{0\le i\le m+n-2}$ is a family of $m+n-2$ elements of $K$ and $F_{1-n}\in D$.
Among other results, we prove that a particular instance of this system (with $D=K[y]$ and
$F_{1-n}=y$) has a solution in $D^{m+n-2}$ if and only if there exists a counterexample
$(P,Q)$ to JC  with $(n,m)=(\deg(P),\deg(Q))$. For this we use an equivalent formulation
of the JC due to Abhyankar \cite{A}, which asserts that JC is true if for all Jacobian
pairs $(P,Q)$ either $\deg(P)$ divides $\deg(Q)$ or viceversa.  We also prove that if
$D$ is an integral domain, then the set of solutions of $S_t(n,m,(\lambda_i),F_{1-n})$
is finite. After that, we analyse the case in which $\lambda_i=0$ for $i>0$, which we call
the homogeneous system, giving a very detailed description of its solutions. In
Proposition~\ref{zannier} we show that the homogeneous system has always a solution,
using a result of~\cite{Z} (See also~\cite{APZ}).

 Our system
provides a significative reduction of the number of equations and variables needed in order
to verify the existence of a counterexample to JC at $(n,m)$, where the most naive approach
needs $m(m+1)/2+n(n+1)/2$ variables and $(m+n-1)(m+n-2)/2$ equations. However the number of
equations is still too big to have a realistic chance to verify the existence of a
counterexample to JC for the pairs $(m,n)=(48,64)$, $(m,n)=(50,75)$, $(m,n)=(56,84)$ or $(m,n)=(66,99)$,
which are the cases found in~\cite{M}.

In the last section we show how one has to proceed in a concrete example, analysing the case
$(n,m)=(50,75)$. Using a reduction of degree technique as in Section~8 of \cite{GGV1}, one
can show that in that case there must exist a pair $(P,Q)\in K[x,y,y^{-1}]$ with
$(\deg_x(P),\deg_x(Q))=(4,6)$ or $(\deg_x(P),\deg_x(Q))=(6,9)$, satisfying certain additional
properties. Among others, the Jacobian $[P,Q]\notin K^{\times}$. Due to this fact we must use
a slight variation of the system~$S_t(n,m,(\lambda_i),F_{1-n})$. Our computations provide an
independent verification of Moh's result: There is no counterexample at $(50,75)$. An advantage
of our system of equations compared to the ones used by Moh, is its form, which is canonical
even for the modified systems. On one hand this allows to program more general algorithms in
order to verify concrete cases, following the procedure suggested in Section~5. On the other
hand, further analysis of the structure of the system of equations could give some progress
in solving the JC, discarding at least some infinite families of possible counterexamples,
and not only single cases.

\section{The Jacobian Conjecture as a system of equations}
Let $K$ be a characteristic zero field and let $D$ an arbitrary commutative $K$-algebra. In
this section we introduce a polynomial system $S_t(n,m,(\lambda_i),F_{1-n})$ of $m+n-2$
equations with $m+n-2$ variables, where $(\lambda_i)_{0\le i\le m+n-2}$ is a family of
$m+n-2$ elements of $K$ and $F_{1-n}\in D$. The main results are Theorem~\ref{principal}
and Corollary~\ref{caracterizacion}, in which we show that there exists a counterexample
$(P,Q)$ to JC with $(\deg(P),\deg(Q))=(m,n)$ if and only if  $S_t(n,m,(\lambda_i),y)$ has
a solution in $K[y]^{m+n-2}$ for some~$\lambda_1,\dots, \lambda_{m+n-2}\in K$.

A non-zero element $\mathbf{w}:=(w_1,w_2)\in \mathds{Z}^2$ is  called a {\em a direction}
if $\gcd(w_1,w_2) = 1$ and $w_1>0$ or $w_2>0$. In the sequel for each direction
$\mathbf{w}:=(w_1,w_2)$, we write $|\mathbf{w}|:=w_1+w_2$. Furthermore, by the sake of
simplicity we set $R:= K[x,y]$. A polynomial $P\in R$ is said to have a {\em Jacobian mate}
$Q\in R$ if
$$
[P,Q] := \partial_x P \partial_y Q -  \partial_y P \partial_x Q \in K^{\times}.
$$
In this case $P$ and $Q$ are called {\em  Jacobian polynomials} and $(P,Q)$ is called a
{\em Jacobian pair}.

\smallskip

To each direction $\mathbf{w}$ we associate the so-called $\mathbf{w}$-grading on $R$,
$$
R:= \bigoplus_{d\in \mathds{Z}} R_d(\mathbf{w}),
$$
where $R_d(\mathbf{w})$ is the the $K$-vector subspace of $R$ generated by all monomials
$x^iy^j$ such that $i w_1 + j w_2 = d$. If there is no confusion possible, we will write
$R_d$ instead of $R_d(\mathbf{w})$. For $P \in R\setminus\{0\}$ we denote by $P_+$ the
$\mathbf{w}$-homogeneous part of $P$ of highest degree. Furthermore, if
$P_+\in R_d(\mathbf{w})$, then we say that the $\mathbf{w}$-degree of $P$ is $d$, and write
$\mathbf{w}\!\deg(P) = d$. For convenience we set $\mathbf{w}\!\deg(0) = -\infty$. As usual
we will write $\deg(P)$, $\deg_x(P)$ and $\deg_y(P)$ instead of $(1,1)\!\deg(P)$,
$(1,0)\!\deg(P)$ and $(0,1)\!\deg(P)$, respectively. We also say that $P$ is homogeneous
if it is $(1,1)$-homogeneous. We have the
following result due to Abhyankar:

\begin{proposition}[\cite{vdE}*{Theorem 10.2.23}]\label{contraejemplo subrectangular}
The Jacobian conjecture is false if and only if there exists a Jacobian pair $(P,Q)$,
such that neither $\deg(P)$ divides $\deg(Q)$ nor $\deg(Q)$ divides $\deg(P)$.
\end{proposition}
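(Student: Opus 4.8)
The plan is to prove the two implications separately; each rests on one classical structural fact about $R = K[x,y]$.

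\textit{Sufficiency.} Suppose $(P,Q)$ is a Jacobian pair with neither $\deg P \mid \deg Q$ nor $\deg Q \mid \deg P$. I claim $(P,Q)$ cannot be an automorphism of $R$, which is exactly what ``JC is false'' means. Indeed, by the Jung--van der Kulk theorem $\Aut(R)$ is generated by affine and elementary (triangular) automorphisms, and a standard consequence is that for \emph{every} polynomial automorphism $(P,Q)$ one has $\deg P \mid \deg Q$ or $\deg Q \mid \deg P$. The hypothesis rules this out, so $(P,Q) \notin \Aut(R)$ and JC fails.

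\textit{Necessity.} Assume JC is false and fix a Jacobian pair $(P,Q)$ that is \emph{not} an automorphism; I will reduce it, staying inside the class of non-automorphic Jacobian pairs, to one with the required non-divisibility, by a Euclidean algorithm on the top-degree homogeneous parts. Two observations hold for any non-automorphic Jacobian pair. First, $\deg P, \deg Q \ge 2$: degree $0$ is impossible since then $[P,Q]=0\notin K^\times$, and if, say, $\deg P = 1$, then after subtracting a constant and making a linear change of the variables $x,y$ we may assume $P = x$, whence $[x,Q] = \partial_y Q \in K^\times$ forces $Q = cy + h(x)$ with $c \in K^\times$, so $K[P,Q] = R$ and $(P,Q)\in\Aut(R)$, a contradiction. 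Second, writing $P_+$, $Q_+$ for the leading (top total-degree) homogeneous parts, $[P_+,Q_+]=0$: it is the degree-$(\deg P + \deg Q - 2)$ component of $[P,Q]$, and $\deg P + \deg Q - 2 \ge 2 > 0$ while $[P,Q]\in K^\times$ has degree $0$. Now invoke the elementary fact that two homogeneous polynomials in two variables with vanishing Jacobian are scalar multiples of coprime powers of a common homogeneous $H$: write $P_+ = \alpha H^a$, $Q_+ = \beta H^b$ with $\gcd(a,b)=1$, so $\deg P = a\deg H$ and $\deg Q = b\deg H$. If $\deg P \mid \deg Q$ then $a \mid b$, hence $a=1$, so $P_+ = \alpha H$ and $Q_+ = \beta H^b = c\,P_+^{\,b}$ with $b = \deg Q/\deg P$ and $c \in K^\times$ (the scalar lies in $K$ by comparing coefficients of elements of $R$). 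Replace $Q$ by $Q' := Q - cP^b$: then $[P,Q'] = [P,Q]$, so $(P,Q')$ is again a Jacobian pair; $\deg Q' < \deg Q$ since the leading forms cancel; and $K[P,Q'] = K[P,Q]$, so $(P,Q')\in\Aut(R) \iff (P,Q)\in\Aut(R)$, i.e.\ $(P,Q')$ is still not an automorphism. Symmetrically, if instead $\deg Q \mid \deg P$, reduce $P$ modulo a power of $Q$. Each such step strictly decreases $\deg P + \deg Q$, so after finitely many steps we reach a non-automorphic Jacobian pair for which neither degree divides the other.

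\textit{Where the work lies.} The bookkeeping (invariance of the bracket and of $K[P,Q]$, and the degree drop) is immediate. The two genuine inputs are the Jung--van der Kulk description of $\Aut(R)$ used in Sufficiency, and the ``common coprime power'' lemma for binary homogeneous forms with zero Jacobian used in Necessity; the latter follows from Euler's identity together with unique factorisation of binary forms over $\ov{K}$, with a little care to keep the relevant scalar in $K$. These are the points I would take from the literature (e.g.\ \cite{vdE}) rather than reprove from scratch, and they are also the only nontrivial ingredients of the argument.
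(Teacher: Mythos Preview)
The paper does not prove this proposition; it is quoted without proof from \cite{vdE}*{Theorem 10.2.23}, with only the subsequent remark recording that the ``sufficiency'' direction follows from the arguments there. Your argument is correct and is precisely the classical proof one finds in the cited reference: Jung--van der Kulk for sufficiency, and Abhyankar's degree-reduction for necessity, using that the leading $(1,1)$-forms of a Jacobian pair of degrees $\ge 2$ have vanishing Jacobian and are therefore scalar multiples of coprime powers of a common homogeneous form. There is nothing in the paper itself to compare against, but your write-up matches the standard treatment.
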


\begin{remark}\label{re 1.2} The arguments in the proof of the above proposition show that
if $(P,Q)$ is a Jacobian pair such that neither $\deg(P)$ divides $\deg(Q)$ nor $\deg(Q)$
divides $\deg(P)$, then $(P,Q)$ is a counterexample to JC.
\end{remark}

We will use freely that if $\varphi$ is an automorphism of $R$, then
$$
[\varphi(P),\varphi(Q)] = \varphi([P,Q])[\varphi(x),\varphi(y)].
$$
Let $(P,Q)$ be as in Proposition~\ref{contraejemplo subrectangular}. For each
$\lambda\in K$ we define $\varphi_{\lambda}\in\Aut(R)$ by
$$
\varphi_{\lambda}(x):=x\quad\text{and}\quad \varphi_{\lambda}(y):=y+\lambda x.
$$
Let $n:=\deg(P)$, $m:=\deg(Q)$ and  $\mathbf{w}:=(1,0)$ . It is easy to check that there
exists $\lambda\in K$ such that $\varphi_{\lambda}(P)_+ = \mu_Px^n$ and
$\varphi_{\lambda}(Q)_+ = \mu_Q x^m$, with $\mu_P,\mu_Q\in K^{\times}$. Consequently,
since $\varphi_{\lambda}$ is $(1,1)$-homogeneous,
$$
\varphi_{\lambda}(P) = \mu_P x^n + \mu_{n-1} x^{n-1} + \cdots + \mu_0,
$$
with $\mu_{n-i}\in K[y]$ and $\deg(\mu_{n-i})\le i$. Let $\phi$ be the automorphism of
$R$ defined by $\phi(y) := y$ and $\phi(x) := x - \cramped{\frac{\mu_{n-1}}{n}}$.
Replacing $P$ and $Q$ by $\cramped{\frac{1}{\mu_P} \phi(\varphi_{\lambda}(P))}$ and
$\cramped{\frac{1}{\mu_Q} \phi(\varphi_{\lambda}(Q))}$, respectively, we can assume
without loss of generality that
\begin{equation}
P = x^n + \gamma_{n-2} x^{n-2} + \cdots + \gamma_0 \quad\text{and}\quad Q =
x^m + \delta_{m-1} x^{m-1} + \cdots + \delta_0,\label{eqq1}
\end{equation}
with $\gamma_{n-i},\delta_{m-i}\in K[y]$ and $\deg(\gamma_{n-i}),\deg(\delta_{m-i})\le i$.
Furthermore, a standard straightforward computation shows that there exists a unique
$C\in K[y]((x^{-1}))$ such that
\begin{equation}
C^n = P\qquad\text{and}\qquad C = x + C_0 + C_{-1} x^{-1}+ C_{-2} x^{-2} + \cdots,\label{eqq2}
\end{equation}
where $C_k\in K[y]$, $C_0=0$ and $\deg_y(C_k)\le -k+1$ for all $k\le -1$. It is easy to
see that $C$ is invertible and
$$
C^j = x^j + (C^j)_{j-1} x^{j-1} + (C^j)_{j-2} x^{j-2} + (C^j)_{j-3} x^{j-3}
+ (C^j)_{j-4} x^{j-4} + \cdots\qquad\text{for all $j\in \mathds{Z}$,}
$$
where $(C^j)_{-k}\in K[y]$, $(C^j)_{j-1}=0$ and $\deg_y((C^j)_k)\le -k+j$ for all $k\le j-2$.

\begin{definition} Let $H = \sum a_{ij} x^i y^j \in K[y]((x^{-1}))\setminus\{0\}$. The
{\em support} of $H$ is
$$
\Supp(H):=\left\{(i,j)\in \mathds{Z}\times\mathds{N}_0 : a_{ij}\ne 0 \right\}.
$$
\end{definition}

Let $\mathbf{w}=(w_1,w_2)$ be a direction. For $H\in K[y]((x^{-1}))\setminus\{0\}$, we write
$$
\mathbf{w}\!\deg(H):=\sup\{iw_1 + jw_2 : (i,j)\in \Supp(H)\}.
$$
Of course it is possible that $\mathbf{w}\!\deg(H) = +\infty$.

\smallskip

For $P,Q\in K[y]((x^{-1}))$ we define
$$
[P,Q] := \partial_x P \partial_y Q -  \partial_y P \partial_x Q,
$$
where $\partial_x P$ denotes the formal derivative of $P$ with respect to $x$, etcetera. It is easy to see that
$$
\mathbf{w}\!\deg([P,Q]) \le \mathbf{w}\!\deg(P) + \mathbf{w}\!\deg(Q) - |\mathbf{w}|,
$$
for any direction $\mathbf{w}$.

\begin{definition}[{\cite{vdE}*{page 247}}]\label{extension} Let $P$ be a polynomial of
degree $>1$ having a Jacobian mate of degree $>1$ and let $\mathbf{w}$ be a direction.
Let $R[P_+^{-1}]$ be the localization of $R$ in $P_+$. The ring extension $\tilde R_{P_+}$
of $R$ is the set of formal sums $f := \sum_{i\in \mathds{Z}} f_i$, where each $f_i$ is a
$\mathbf{w}$-homogeneous element of $R[P_+^{-1}]$ of degree $i$ and $f_i = 0$ for $i\gg 0$.
If $f\ne 0$, then the highest $i$ with $f_i\ne 0$, denoted by $\mathbf{w}\!\deg(f)$, is
called the {\em $\mathbf{w}$-degree of $f$}, while $f_i$ is denoted by $f_+$.
\end{definition}

\begin{proposition}\label{calculo de R} If $\mathbf{w}=(1,1)$ and $P$ is as in~\eqref{eqq1},
then $\tilde R_{P_+}$ is in a natural way a graded subalgebra of $K[y]((x^{-1}))$.
\end{proposition}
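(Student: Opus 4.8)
The plan is to show that every $\mathbf{w}$-homogeneous element of $R[P_+^{-1}]$ of degree $i$ can be identified with a Laurent series in $K[y]((x^{-1}))$ supported in $\mathbf{w}$-degree $i$, and that this identification is compatible with the formal sums defining $\tilde R_{P_+}$. First I would observe that for $\mathbf{w}=(1,1)$ and $P$ as in~\eqref{eqq1}, the top part is $P_+ = x^n$, so $R[P_+^{-1}] = R[x^{-1}] = K[y][x,x^{-1}]$. In this ring the $\mathbf{w}$-homogeneous component of degree $i$ consists of finite sums $\sum_{j\ge 0} a_j x^{i-j} y^j$ with $a_j \in K$ (only finitely many nonzero), which is visibly a subspace of the degree-$i$ part of $K[y]((x^{-1}))$ under the $\mathbf{w}=(1,1)$-grading on the latter; note that $K[y]((x^{-1}))$ is $\mathbf{w}$-graded precisely because for fixed $y$-degree the $x$-exponents are bounded above, so each $\mathbf{w}$-degree slice is finite-dimensional and the direct sum decomposition makes sense.

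Next I would assemble the formal sums. An element $f = \sum_{i\in\mathds{Z}} f_i \in \tilde R_{P_+}$ with $f_i = 0$ for $i \gg 0$ maps to $\sum_i \iota(f_i)$, where $\iota$ sends the homogeneous piece $f_i$ to the corresponding finite Laurent polynomial in $K[y]((x^{-1}))$. The condition $f_i = 0$ for $i \gg 0$ is exactly what guarantees the image is a well-defined element of $K[y]((x^{-1}))$: for each $y$-exponent $j$, the $x$-exponents that occur come from pieces $f_i$ with $i = (\text{x-exponent}) + j$, and since $i$ is bounded above while $j\ge 0$, the $x$-exponent is bounded above, so we get a genuine Laurent series in $x^{-1}$ with coefficients in $K[y]$. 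I would check this map is a ring homomorphism by noting that multiplication in $\tilde R_{P_+}$ is defined componentwise-graded (the degree-$k$ part of $fg$ is $\sum_{i+i'=k} f_i g_{i'}$, a finite sum because of the upper-boundedness), which matches exactly the Cauchy product in $K[y]((x^{-1}))$; and it is injective because a nonzero $f$ has a nonzero top part $f_+$, whose image has strictly larger $\mathbf{w}$-degree than any lower piece and hence survives.

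Finally I would confirm that the grading is respected: $\mathbf{w}\!\deg$ as defined in Definition~\ref{extension} for $f\in\tilde R_{P_+}$ agrees with $\mathbf{w}\!\deg$ as the sup over the support in $K[y]((x^{-1}))$, because the top homogeneous piece $f_+$ maps to the top-$\mathbf{w}$-degree part of the image and nothing above it can be cancelled. This makes $\tilde R_{P_+}$ a graded subalgebra of $K[y]((x^{-1}))$, with the induced grading, completing the argument. The only mild subtlety — and the step I would be most careful about — is making rigorous that $K[y]((x^{-1}))$ genuinely decomposes as a direct sum $\bigoplus_d$ of its $\mathbf{w}=(1,1)$-homogeneous slices (rather than only a product), which again comes down to the fact that for a fixed element only finitely many $y$-exponents occur in each $x$-degree and the $x$-exponents are bounded above, so each element lies in a finite sum of slices; once this is in hand the rest is bookkeeping.
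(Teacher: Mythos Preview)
Your proof has a genuine gap at the very first step: you claim that for $\mathbf{w}=(1,1)$ the leading form is $P_+ = x^n$, but this is the $(1,0)$-leading term, not the $(1,1)$-leading term. From~\eqref{eqq1} we have $P = x^n + \gamma_{n-2}x^{n-2} + \cdots + \gamma_0$ with $\deg_y(\gamma_{n-i})\le i$, so each term $\gamma_{n-i}x^{n-i}$ can contribute a monomial of total degree exactly $n$ (namely the top $y$-part of $\gamma_{n-i}$). Hence in general
\[
P_+ = x^n + \alpha_2 y^2 x^{n-2} + \cdots + \alpha_n y^n,
\]
and $R[P_+^{-1}]$ is \emph{not} $K[y][x,x^{-1}]$. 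The paper's proof addresses exactly this point: it writes $P_+ = x^n - B$ and expands $P_+^{-1} = x^{-n} + x^{-2n}B + x^{-3n}B^2 + \cdots$ as a geometric series, checking that this converges in $K[y]((x^{-1}))$ because $\deg_x(x^{-(i+1)n}B^i)\le -n-i$. Once that is done, every $\mathbf{w}$-homogeneous element of $R[P_+^{-1}]$ lands in $K[y]((x^{-1}))$, and the summability argument (which is close to your second paragraph) goes through.

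A secondary issue: in your final paragraph you try to show that $K[y]((x^{-1}))$ itself decomposes as a direct sum $\bigoplus_d$ of $(1,1)$-homogeneous slices. It does not---the element $\sum_{k\ge 0} x^{-k}$ already has infinitely many nonzero homogeneous pieces---and your justification (``only finitely many $y$-exponents occur in each $x$-degree and the $x$-exponents are bounded above'') does not yield finitely many slices. Fortunately this decomposition is not needed: the paper only asserts that $\tilde R_{P_+}$ embeds as a subalgebra and that its internal grading is compatible with $\mathbf{w}\!\deg$ on $K[y]((x^{-1}))$, not that the ambient ring is graded.
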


\begin{proof} Write
$$
P_+ = x^n + \alpha_1 yx^{n-1} + \alpha_2 y^2x^{n-2} +\cdots + \alpha_n y^n = x^n - B
$$
where $\alpha_1,\dots,\alpha_n\in K$ and
$B := - \alpha_1 yx^{n-1} - \alpha_2 y^2x^{n-2} -\cdots - \alpha_n y^n$ (actually $\alpha_1=0$
but we do not use this fact). A direct computation shows that $P_+$ is invertible in
$K[y]((x^{-1}))$ and that
$$
P_+^{-1} = x^{-n} + x^{-2n}B + x^{-3n}B^2 + x^{-4n}B^3 + \cdots.
$$
Note that the sum in the right side of this equality is well defined since
$$
\deg_x(x^{-in-n} B^i) \le (n-1)i - in-n =-n-i.
$$
In order to finish the proof it suffices to show that each series
$$
\sum_{i\le r} f_i\qquad\text{with $f_i\in K[y]((x^{-1}))$ such that $\deg(f_i) = i$,}
$$
is summable in $K[y]((x^{-1}))$. But this follows from the fact that $\deg(f_i) = i$ implies that
$$
f_i = \beta_0 x^i + \beta_1 x^{i-1} + \beta_2 x^{i-2} +\cdots
$$
with $\beta_i\in K[y]$ and $\deg(\beta_i)\le i$.
\end{proof}

In order to prove Theorem~\ref{principal}, we will need to use the following result, in
which $P_+$ and $F_+$ are taken with respect to the $(1,0)$-grading.

\begin{lemma}\label{auxiliar} Let $P,F\in K[y]((x^{-1}))$ be such that $P_+ = x^n$,
$\deg_x(F)\le 1-n$ and $[P,F]\in K^{\times}$. Then $F_+=(\mu_0+\mu_1 y)x^{1-n}$ with $\mu_1\ne 0$.
\end{lemma}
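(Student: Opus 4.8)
The plan is to analyze the equation $[P,F]\in K^{\times}$ by looking at the top-degree part with respect to the $(1,0)$-grading, i.e.\ by comparing coefficients of powers of $x$. Write $F = F_+ + (\text{lower order in } x)$, where $F_+ = g(y)x^{1-n}$ for some $g\in K[y]$ which we must show has the form $\mu_0+\mu_1 y$ with $\mu_1\neq 0$. Since $P_+ = x^n$, we have $\partial_x P = n x^{n-1} + (\text{lower})$ and $\partial_y P = (\text{lower})$, where ``lower'' means strictly smaller $(1,0)$-degree than $n$ and $n-1$ respectively (here $\partial_y P$ has $x$-degree at most $n-1$, in fact at most $n-2$ since the $x^{n-1}$ and $x^n$ coefficients of $P$ are constants — actually $P_+=x^n$ already gives $\deg_x(P - x^n)\le n-1$, and one checks $\partial_y$ of that is still controlled). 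First I would compute the $(1,0)$-degree of each of the two terms $\partial_x P\,\partial_y F$ and $\partial_y P\,\partial_x F$ and identify which one carries the top degree.

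The key computation: $\partial_x P \cdot \partial_y F$ has top term $n x^{n-1}\cdot g'(y) x^{1-n} = n g'(y) x^{0}$, contributing to the $x^0$-coefficient. For the other term $\partial_y P\cdot \partial_x F$, the factor $\partial_x F$ has $x$-degree $\le -n$ (one below $1-n$), while $\partial_y P$ has $x$-degree $\le n-1$, so this product has $x$-degree $\le -1$ and cannot contribute to the $x^0$-coefficient. Hence the $x^0$-coefficient of $[P,F]$ equals $n g'(y)$ plus possibly contributions from $\partial_x P \cdot \partial_y F$ coming from the product of the lower-order part of $\partial_x P$ (degree $\le n-2$ in $x$) with $\partial_y F$ (degree $\le 1-n$ in $x$), which gives $x$-degree $\le -1$ and hence again does not reach $x^0$. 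So the $x^0$-coefficient of $[P,F]$ is exactly $n\,g'(y)$. Since $[P,F]\in K^{\times}$, its $x^0$-coefficient is a nonzero constant, forcing $n g'(y)\in K^{\times}$, i.e.\ $g'(y)$ is a nonzero constant, which means $g(y) = \mu_0 + \mu_1 y$ with $\mu_1\neq 0$. This also shows $\deg_x(F)$ is exactly $1-n$ (so that $F_+$ is genuinely $g(y)x^{1-n}$ and not $0$); if instead $F$ had $\deg_x(F) < 1-n$, then the whole bracket would have $x$-degree $< 0$ and could not be a nonzero constant, a contradiction — so in fact the hypothesis $\deg_x(F)\le 1-n$ together with $[P,F]\in K^{\times}$ forces equality.

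The main obstacle — though it is a mild one — is bookkeeping: making sure that no term of $[P,F]$ other than $n g'(y) x^0$ lands in the $x^0$-slot. This requires being careful about the exact $x$-degrees of $\partial_x P$, $\partial_y P$, $\partial_x F$, $\partial_y F$: the point is that $\partial_y$ lowers nothing in $x$-degree but $\partial_x$ lowers it by $1$, and the gap between $\deg_x(P)=n$ and $\deg_x(F)\le 1-n$ is exactly tuned so that only the cross term $\partial_x P\cdot\partial_y F$ reaches degree $0$, and within that term only the leading $n x^{n-1}$ piece of $\partial_x P$ does. Once this degree accounting is laid out cleanly, the conclusion is immediate. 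A secondary minor point is to note that $g\in K[y]$ rather than a Laurent series in $x$ with $y$-coefficients — but since $F\in K[y]((x^{-1}))$, its coefficient of $x^{1-n}$ is automatically a polynomial in $y$, so $F_+$ has exactly the claimed shape.
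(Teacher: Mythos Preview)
Your proof is correct and follows essentially the same approach as the paper: both isolate the $x^0$-coefficient of $[P,F]$ and show it equals $n g'(y)$, forcing $g'\in K^{\times}$. The only cosmetic difference is that the paper packages the degree bookkeeping via the formula $[P,F]_k=\sum_{i+j=k+1}[P_i,F_j]$ for the $(1,0)$-homogeneous components, whereas you track the $x$-degrees of $\partial_x P$, $\partial_y P$, $\partial_x F$, $\partial_y F$ directly; the content is identical.
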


\begin{proof} Let $P=\sum_{i\le n} P_{i}$ and $F=\sum_{j\le 1-n} F_{j}$ be the
$(1,0)$-homogeneous decompositions of $P$ and $F$. Then the $(1,0)$-homogeneous decomposition of
$$
[P,F] = [P,F]_0 + [P,F]_{-1} + [P,F]_{-2} +\cdots
$$
is given by
$$
[P,F]_k=\sum_{i+j=k+1} [P_i,F_j].
$$
Write $F_{1-n} = x^{1-n} f_{1-n}(y)$. Since $[P,F]\in K^{\times}$, we have
$$
nf'(y) = [x^n,x^{1-n}f_{1-n}(y)] = [P_n,F_{1-n}] = [P,F]_0 = [P,F]\in K^{\times}
$$
So $f'(y)\in K^{\times}$, which implies that $f(y)=\mu_0+\mu_1 y$ for some
$\mu_0\in K$ and $\mu_1\in K^{\times}$, as desired.
\end{proof}

We also will need the following particular case of~\cite{vdE}*{Lemma~10.2.11}:

\begin{proposition}\label{vdE 10.2.11} Let $\mathbf{w}=(1,1)$ and let $P$ be as
in~\eqref{eqq1} and $C\in K[y]((x^{-1}))$ as in~\eqref{eqq2}. Assume $P$ has a
Jacobian mate $Q\!\in\! R$ of degree~$>1$ and let $\tilde Q\!\in\!\tilde R_{P_+}$
be such that $[P,\tilde Q]\!\in\! K^{\times}$. If
$$
\deg(P)+\deg(\tilde Q)-2 > 0,
$$
then there exists $j\in\mathds{Z}$ and $\lambda\in K^{\times}$ such that
$C^j\in \tilde R_{P_+}$ and $\deg(\tilde Q-\lambda C^j)<\deg(\tilde Q)$.
\end{proposition}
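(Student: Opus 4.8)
The plan is to pass to the leading $(1,1)$-homogeneous forms, to recognise $\tilde Q_+$ as a scalar multiple of a power of the leading form $C_+$ of $C$, and then to lift this identity from leading forms back to the full series. Throughout, $\mathbf{w}=(1,1)$, all gradings and leading forms are taken with respect to $\mathbf{w}$, and by Proposition~\ref{calculo de R} I regard $\tilde R_{P_+}$ as a graded subalgebra of $K[y]((x^{-1}))$, so that the bracket, $\mathbf{w}\!\deg$ and the operation $f\mapsto f_+$ all make sense there.

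\textbf{Step 1: top-degree reduction.} Since $[P,\tilde Q]\in K^\times$ one has $\tilde Q\ne 0$, so $d:=\deg(\tilde Q)$ is an integer and $\tilde Q_+\ne 0$ is $\mathbf{w}$-homogeneous of degree $d$. Writing $P=P_++(P-P_+)$ and $\tilde Q=\tilde Q_++(\tilde Q-\tilde Q_+)$ and expanding $[P,\tilde Q]$, the bound $\mathbf{w}\!\deg([A,B])\le\mathbf{w}\!\deg(A)+\mathbf{w}\!\deg(B)-2$ shows that every term except $[P_+,\tilde Q_+]$ has $\mathbf{w}$-degree $<n+d-2$, so the homogeneous component of $[P,\tilde Q]$ in degree $n+d-2$ equals $[P_+,\tilde Q_+]$. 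As $[P,\tilde Q]$ is homogeneous of degree $0$ and $n+d-2>0$ by hypothesis, I conclude $[P_+,\tilde Q_+]=0$.

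\textbf{Step 2: the homogeneous centraliser of $C_+$.} Taking leading forms in $C^n=P$ gives $C_+^n=P_+$, so in the domain $K[y]((x^{-1}))$ one has $0=[P_+,\tilde Q_+]=nC_+^{n-1}[C_+,\tilde Q_+]$, hence $[C_+,\tilde Q_+]=0$. Setting $t:=y/x$, each $\mathbf{w}$-homogeneous element of $K[y]((x^{-1}))$ of degree $e$ is of the form $x^e\phi(t)$ with $\phi\in K[[t]]$, and $C_+=x\psi(t)$ with $\psi\in K[[t]]$, $\psi(0)=1$ (its leading term being $x$). A direct computation gives $[C_+,x^e\phi(t)]=x^{e-1}\bigl(\psi\phi'-e\psi'\phi\bigr)$, so with $\tilde Q_+=x^d\phi(t)$ the relation $[C_+,\tilde Q_+]=0$ becomes $\psi\phi'=d\psi'\phi$. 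Comparing orders of vanishing at $t=0$ on the two sides rules out $\phi(0)=0$, and then $\phi/\psi^d$ has vanishing logarithmic derivative, so (as $K$ has characteristic zero) $\phi=\phi(0)\psi^d$. Hence $\tilde Q_+=\lambda C_+^d$ with $\lambda:=\phi(0)\in K^\times$, and in particular $C_+^d=\lambda^{-1}\tilde Q_+\in R[P_+^{-1}]\subseteq\tilde R_{P_+}$.

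\textbf{Step 3: lifting to $C^j$.} Take $j:=d$. Put $u:=C/C_+\in K[y]((x^{-1}))$, which has leading form $1$ and satisfies $u^n=P/P_+=1-v$, where $v:=1-P_+^{-1}P\in\tilde R_{P_+}$ has $\mathbf{w}\!\deg(v)<0$. Then $\sum_{k\ge 0}\binom{1/n}{k}(-v)^k$ is summable in $\tilde R_{P_+}$, has leading form $1$, and its $n$-th power is $1-v$; by uniqueness of such roots in characteristic zero it equals $u$, so $u\in\tilde R_{P_+}$ and $u$ is a unit there. Therefore $C^d=C_+^d\,u^d\in\tilde R_{P_+}$, i.e.\ $C^j\in\tilde R_{P_+}$; and since $u^d$ has leading form $1$ we get $(C^d)_+=C_+^d=\lambda^{-1}\tilde Q_+$, so $\tilde Q$ and $\lambda C^d$ have the same leading form and $\deg(\tilde Q-\lambda C^d)<d=\deg(\tilde Q)$, as required. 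The main obstacle is Step 2: everything else is bookkeeping with $\mathbf{w}$-degrees and with the summability built into the definition of $\tilde R_{P_+}$, whereas Step 2 is where the specific shape of $C_+$ — an $n$-th root of $P_+$ with leading term $x$ — is genuinely used, and the order comparison excluding $\phi(0)=0$ is the one point that needs a little care.
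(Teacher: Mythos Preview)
The paper does not actually prove this proposition: it is quoted as a particular case of \cite{vdE}*{Lemma~10.2.11}, with only a remark afterwards to reconcile notation. Your argument, by contrast, is a complete self-contained proof, and it is correct.

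Your three steps are exactly the natural ones. In Step~1 the degree bookkeeping is fine. In Step~2 the identification of $(1,1)$-homogeneous elements of $K[y]((x^{-1}))$ of degree $e$ with $x^e K[[t]]$ (for $t=y/x$) is valid, the bracket formula $[x\psi,x^e\phi]=x^{e-1}(\psi\phi'-e\psi'\phi)$ is correct, and the order-of-vanishing argument ruling out $\phi(0)=0$ works uniformly in $d$ (including $d\le 0$) since in characteristic zero $\phi\ne 0$ with $\mathrm{ord}_t(\phi)=r\ge 1$ forces $\mathrm{ord}_t(\psi\phi')=r-1<r\le \mathrm{ord}_t(d\psi'\phi)$. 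In Step~3 you correctly use that $C_+^d=\lambda^{-1}\tilde Q_+\in R[P_+^{-1}]$ and that the binomial series for $(1-v)^{\pm 1/n}$ lies in $\tilde R_{P_+}$, which makes $u=C/C_+$ a unit there and covers negative $j=d$ as well. In fact you prove the slightly sharper statement $j=\deg(\tilde Q)$, which is precisely how the result is used downstream in the proof of Theorem~\ref{principal}.
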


\begin{remark} The number $n$ that appears in the statement
of~\cite{vdE}*{Lemma~10.2.11} is not the degree of $P$, but only a divisor of $\deg(P)$.
The element $P^{\frac{1}{n}}$, introduced in~\cite{vdE} above of Lemma~10.2.10, equals
$\mu C^{\deg(P)/n}$ where $\mu\in K^{\times}$ and $n$ is as in~\cite{vdE}*{Lemma~10.2.11}.
\end{remark}

\begin{theorem}\label{principal} The JC is false if and only if there exist

\begin{itemize}

\smallskip

\item[-] $P,Q\in R$ and $C,F\in K[y]((x^{-1}))$,

\smallskip

\item[-] $n,m\in \mathds{N}$ such that $n\nmid m$ and $m\nmid n$,

\smallskip

\item[-] $\lambda_i\in K$ ($i=0,\dots,m+n-2$) with $\lambda_0=1$,

\smallskip

\end{itemize}
such that
\begin{itemize}

\smallskip

\item[-] $C$ has the form
$$
C = x + C_{-1}x^{-1}+ C_{-2}x^{-2} + \cdots \qquad\text{with each $C_{-i}\in K[y]$,}
$$

\smallskip

\item[-] $\deg(C)=1$ and $\deg(F)=2-n$,

\smallskip

\item[-] $F_+=x^{1-n}y$, where $F_+$ is taken with respect to the $(1,0)$-grading,

\smallskip

\item[-] $C^n=P$ and $Q=\sum_{i=0}^{m+n-2}\lambda_i C^{m-i}+F$.

\smallskip

\end{itemize}
Furthermore, under these conditions, $(P,Q)$ is a counterexample to the Jacobian conjecture.

\end{theorem}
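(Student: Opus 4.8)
The plan is to prove the two implications separately, reducing everything to Proposition~\ref{contraejemplo subrectangular} and the structural results established above.

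\textbf{The ``only if'' direction.} Suppose JC is false. By Proposition~\ref{contraejemplo subrectangular} there is a Jacobian pair $(P_0,Q_0)$ with $\deg(P_0)\nmid\deg(Q_0)$ and $\deg(Q_0)\nmid\deg(P_0)$; by Remark~\ref{re 1.2} this is already a counterexample. First I would apply the normalization discussed before~\eqref{eqq1}: after composing with suitable $\varphi_\lambda$ and $\phi$, and rescaling, I may assume $P,Q$ have the form~\eqref{eqq1}, with $n=\deg(P)$, $m=\deg(Q)$, $n\nmid m$, $m\nmid n$. These automorphisms preserve degrees and send Jacobian pairs to Jacobian pairs (up to a unit factor, as recalled in the excerpt), so the counterexample property and the non-divisibility are retained. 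Now take $\mathbf w=(1,1)$. By~\eqref{eqq2} there is a unique $C\in K[y]((x^{-1}))$ with $C^n=P$ and $C=x+C_{-1}x^{-1}+\cdots$, $C_{-i}\in K[y]$, so in particular $\deg(C)=1$. By Proposition~\ref{calculo de R}, $\tilde R_{P_+}$ sits inside $K[y]((x^{-1}))$, and $Q\in R\subseteq\tilde R_{P_+}$. The key step is to iterate Proposition~\ref{vdE 10.2.11}: starting from $\tilde Q_0:=Q$ (note $[P,Q]\in K^\times$ with $\deg(P)+\deg(Q)-2=n+m-2>0$), we obtain $j_1\in\mathds Z$ and $\lambda^{(1)}\in K^\times$ with $C^{j_1}\in\tilde R_{P_+}$ and $\deg(\tilde Q_0-\lambda^{(1)}C^{j_1})<\deg(\tilde Q_0)=m$. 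Since $\mathbf w$-degrees of elements of $\tilde R_{P_+}$ lie in $\mathds Z$ and each step strictly lowers the degree by at least $1$, repeating gives a finite expansion
$$
Q=\sum_{i=0}^{m+n-2}\lambda_i C^{m-i}+F,
$$
where I absorb the coefficients into $\lambda_i\in K$ indexed so that the leading term $C^m$ has coefficient $\lambda_0$, and $F$ collects everything of $\mathbf w$-degree $\le 2-n$ (i.e.\ after $m+n-1$ terms). Because $Q$ is monic of degree $m$ in $x$ under $\mathbf w=(1,1)$ with leading coefficient $1$ (as $P_+$ and hence $C$ has $x^n$-coefficient $1$), one gets $\lambda_0=1$. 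It remains to check the conditions on $F$: we have $F=Q-\sum_i\lambda_iC^{m-i}\in K[y]((x^{-1}))$ with $\mathbf w\deg(F)\le 2-n$, and $[P,F]=[P,Q]-\sum_i\lambda_i[P,C^{m-i}]=[P,Q]\in K^\times$ since $[P,C^{m-i}]=0$ (as $C^n=P$ implies $[P,C]=0$ by the chain rule for the formal bracket, hence $[P,C^k]=0$ for all $k$). Then Lemma~\ref{auxiliar}, applied with the $(1,0)$-grading to the pair $(P,F)$ — here $P_+^{(1,0)}=x^n$ because $P$ is monic in $x$ by~\eqref{eqq1}, and $\deg_x(F)\le 1-n$ since $F$ is a sum of $\mathbf w$-homogeneous pieces of $(1,1)$-degree $\le 2-n$, each of which has $x$-degree at most its $(1,1)$-degree$-0\le 2-n$; a short argument using $\deg_y\le$ bound refines this to $\le 1-n$ on the top piece — forces $F_+^{(1,0)}=(\mu_0+\mu_1y)x^{1-n}$ with $\mu_1\ne0$. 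After the further substitution $x\mapsto x-\mu_0/\mu_1$ (which fixes $C$'s shape up to the additive normalization and rescaling $y$ by $1/\mu_1$), I can normalize to $F_+=x^{1-n}y$ and $\deg(F)=2-n$. This produces all the required data.

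\textbf{The ``if'' direction and the final assertion.} Conversely, suppose the data $P,Q,C,F,n,m,(\lambda_i)$ exist as in the statement. From $C=x+C_{-1}x^{-1}+\cdots$ with $C_{-i}\in K[y]$ one computes $P=C^n=x^n+(\text{lower }x\text{-order})\in K[y]((x^{-1}))$; I must first argue $P\in R=K[x,y]$. This follows because $\deg(P)=\deg(C^n)=n$ under $\mathbf w=(1,1)$ together with $\deg_x(P)\le n$ gives that $P$ is a polynomial: more carefully, $C^n$ a priori lies in $K[y]((x^{-1}))$, and the bound $\deg_y((C^n)_{k})\le n-k$ on its coefficients combined with finitely many positive $x$-powers shows $P\in K[x,y]$ — this is the kind of bookkeeping already used in the paragraph after~\eqref{eqq2}. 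Similarly $Q=\sum\lambda_iC^{m-i}+F\in R$: each $C^{m-i}\in K[y]((x^{-1}))$, and the finite sum plus $F$ (with $\deg_x(F)\le 1-n<0$ controlled by $F_+=x^{1-n}y$ and $\deg(F)=2-n$) has bounded, integer-bounded coefficients, so again lands in $K[x,y]$; the leading part is $\lambda_0 x^m=x^m$ since $\lambda_0=1$, so $\deg(Q)=m$. Now $[P,Q]=\sum_i\lambda_i[P,C^{m-i}]+[P,F]=[P,F]$, again because $[P,C]=0$ (from $P=C^n$) forces $[P,C^k]=0$ for every $k\in\mathds Z$. Finally, $[P,F]\in K^\times$ by Lemma~\ref{auxiliar} in reverse: with $\mathbf w=(1,0)$, $P_+=x^n$ and $\deg_x(F)\le1-n$ with $F_+=x^{1-n}y$, the computation in the proof of Lemma~\ref{auxiliar} gives $[P,F]_0=[x^n,x^{1-n}y]=nx^n\partial_y(x^{1-n}y)-\cdots=n\in K^\times$ and all lower pieces vanish by the degree inequality $\mathbf w\deg([P,F])\le n+(1-n)-1=0$; hence $[P,Q]=n\in K^\times$. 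So $(P,Q)$ is a Jacobian pair with $\deg(P)=n$, $\deg(Q)=m$, and $n\nmid m$, $m\nmid n$. By Proposition~\ref{contraejemplo subrectangular} the JC is false, and by Remark~\ref{re 1.2} the pair $(P,Q)$ is a counterexample, which also establishes the ``Furthermore'' clause for \emph{any} data satisfying the hypotheses.

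\textbf{Main obstacle.} I expect the delicate point to be the ``only if'' direction: organizing the iteration of Proposition~\ref{vdE 10.2.11} so that it terminates after exactly $m+n-1$ steps with a remainder $F$ of $(1,1)$-degree $\le 2-n$, and then transferring between the $(1,1)$-grading (used to define $C$ and the $\tilde R_{P_+}$-expansion) and the $(1,0)$-grading (used in Lemma~\ref{auxiliar} to pin down $F_+$). One must check that the normalizing substitution $x\mapsto x-\mu_0/\mu_1$, $y\mapsto y/\mu_1$ does not disturb the earlier normalizations~\eqref{eqq1}–\eqref{eqq2} in an essential way (it preserves the shape of $C$ up to rescaling and re-centering, and the monicity of $P$), and that the index range $0\le i\le m+n-2$ is exactly what the degree drop produces. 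Verifying the precise degree bounds $\deg_x(F)\le 1-n$ and $\deg_y$ of the top $(1,0)$-piece is routine but must be done with the coefficient bounds on powers of $C$ recorded after~\eqref{eqq2}.
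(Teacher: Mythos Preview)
Your overall strategy matches the paper's, but there are two genuine errors in the execution.

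\textbf{The ``if'' direction.} Your claim that ``all lower pieces vanish by the degree inequality $\mathbf w\deg([P,F])\le n+(1-n)-1=0$'' is false: that inequality only says $\deg_x([P,F])\le 0$, so a priori $[P,F]=n+(\text{terms of negative }x\text{-degree})$, and nothing in your argument kills those terms. The missing idea is the one you sidestepped by trying to \emph{prove} $P,Q\in R$: in the statement $P,Q\in R$ is a \emph{hypothesis}, and the point of that hypothesis is precisely that $[P,Q]\in R$ has no negative $x$-powers. Combining $[P,Q]=[P,F]$ with $[P,F]=n+(\text{negative }x\text{-degree terms})$ and $[P,Q]\in K[x,y]$ forces $[P,Q]=n$. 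This is how the paper closes the argument, and without it your ``if'' direction does not go through.

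\textbf{The ``only if'' direction.} Your final normalization ``$x\mapsto x-\mu_0/\mu_1$ \dots and rescaling $y$ by $1/\mu_1$'' does not achieve $F_+=x^{1-n}y$: shifting $x$ by a constant leaves the $(1,0)$-leading term $(\mu_0+\mu_1 y)x^{1-n}$ unchanged, and rescaling $y$ alone leaves the constant $\mu_0$. Worse, an $x$-shift would reintroduce an $x^{n-1}$ term in $P$ and destroy the form of $C$. The correct automorphism (as in the paper) is $x\mapsto x$, $y\mapsto (y-\mu_0)/\mu_1$, which fixes the shape of $C$ and of~\eqref{eqq1} and sends $(\mu_0+\mu_1 y)x^{1-n}$ to $yx^{1-n}$. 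Separately, your passage from $\deg(F)\le 2-n$ to $\deg_x(F)\le 1-n$ is not a ``short $\deg_y$ argument'': the iteration of Proposition~\ref{vdE 10.2.11} only produces $\lambda_1,\dots,\lambda_{m+n-3}$ and a remainder $\tilde F$ with $\deg(\tilde F)\le 2-n$, which may well contain a term $\tilde F_0\,x^{2-n}$ with $\tilde F_0\in K$. One must \emph{define} $\lambda_{m+n-2}:=\tilde F_0$ and set $F:=\tilde F-\lambda_{m+n-2}C^{2-n}$ to force $\deg_x(F)\le 1-n$; only then does Lemma~\ref{auxiliar} apply.
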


\begin{proof} $\Rightarrow$)\enspace By Proposition~\ref{contraejemplo subrectangular}
we know that there exists a Jacobian pair $(P,Q)$ that is an counterexample, such that
neither $n\nmid m$ nor $m\nmid n$, where $n:=\deg(P)$ and $m:=\deg(Q)$. Futhermore,
by the discusion below that proposition, we can assume that $P$ and $Q$ are as
in~\eqref{eqq1}. Let $C$ be as in \eqref{eqq2}. Thus $\deg(C) = 1$, $C^n = P$ and $C$
has the form
$$
C = x + C_{-1}x^{-1}+ C_{-2}x^{-2} + \cdots \qquad\text{with each $C_{-i}\in K[y]$.}
$$
Since $m+n>2$, by Proposition~\ref{vdE 10.2.11} there exist $j\in\mathds{Z}$ and
$\lambda\in K^{\times}$ such that
$$
\deg(Q-\lambda C^j)<\deg(Q).
$$
By~\eqref{eqq1} and~\eqref{eqq2}, we have $j=m$ and $\lambda = 1$. We claim that there exist
 $\lambda_1,\dots,\lambda_{m+n-3}\in K$ such that
\begin{equation}
\deg\bigl(Q - C^m - \lambda_1 C^{m-1}-\cdots-\lambda_{m+n-3} C^{3-n}\bigr)\le 2-n.\label{eqqq5}
\end{equation}
Assume he have found $\lambda_1,\dots,\lambda_i\in K$, where $i<m+n-2$, such that
\begin{equation}
\deg(Q-C^m-\lambda_1C^{m-1}-\cdots-\lambda_i C^{m-i})\le m-i-1\label{eqqq3}
\end{equation}
Let $\tilde Q:= Q-C^m-\lambda_1C^{m-1}-\cdots-\lambda_i C^{m-i}$. If
$$
n+\deg(\tilde Q)-2 = \deg(P)+\deg(\tilde Q)-|(1,1)| \le 0,
$$
then we take $\lambda_{i+1} = \cdots = \lambda_{m+n-3} = 0$. Otherwise,
\begin{equation}
\deg(\tilde Q) > 2 - n,\label{eqqq4}
\end{equation}
and, again by Proposition~\ref{vdE 10.2.11}, there exist $j\in \mathds{Z}$ and $\lambda_j\in K^{\times}$
such that
$$
\deg(\tilde Q - \lambda_j C^{m-j})<\deg(\tilde Q).
$$
Consequently,
$$
m-j = \deg(C^{m-j}) = \deg(\tilde Q),
$$
and so, by~\eqref{eqqq3} and~\eqref{eqqq4},
$$
i+1\le j\le m+n-3.
$$
This finishes the proof of the claim. Let
$$
\tilde{F}:=Q - C^m - \lambda_1 C^{m-1}-\dots-\lambda_{m+n-3} C^{3-n}.
$$
Since $\deg(\tilde{F})\le 2-n$, there exist $\tilde{F}_0,\tilde{F}_1,\dots \in K[y]$ with
$\deg(\tilde{F}_i)\le i$, such that
$$
\tilde{F} = \tilde{F}_0 x^{2-n} + \tilde{F}_1 x^{1-n}+ \tilde{F}_2 x^{-n}+\cdots.
$$
Setting $\lambda_{m+n-2}:=\tilde{F}_0$ we obtain that
\begin{equation} \label{q como serie de C}
Q= C^m + \lambda_1 C^{m-1}+\dots+\lambda_{m+n-3} C^{3-n}+\lambda_{m+n-2} C^{2-n}+F,
\end{equation}
where
\begin{equation}
F := \tilde{F} - \lambda_{m+n-2} C^{2-n} = F_1 x^{1-n}+F_2 x^{-n}+F_3 x^{-n-1} +\cdots,\label{eqq3}
\end{equation}
where $F_i\in K[y]$ and $\deg(F_i)\le i$. Hence $\deg_x(F)\le 1-n$ and $F_1=\mu_0+\mu_1 y$
with $\mu_0,\mu_1\in K$. Moreover since $P = C^n$ we have $[P,F] = [P,Q]\in K^{\times}$
and so, $\mu_1\ne 0$, by Lemma~\ref{auxiliar}. Let $\varphi$ be the automorphism of
$K[y]((x^{-1}))$ defined by
$$
\varphi(x):=x\quad\text{and}\quad \varphi(y):=\frac{y-\mu_0}{\mu_1}.
$$
Replacing $P$, $Q$, $C$ and $F$ by $\varphi(P)$, $\varphi(Q)$, $\varphi(C)$ and $\varphi(F)$,
respectively, we can assume $\mu_0=0$ and $\mu_1=1$. Thus $F_+=x^{1-n}y$, where $F_+$ is
taken with respect to the $(1,0)$-grading. Note that this equality, combined with the
fact that $\deg(F_i)\le i$ for all $i$, gives $\deg(F) = 2-n$.

\smallskip

\noindent $\Leftarrow$)\enspace Since
$$
[P,F] - [P-P_+,F] - [P_+,F-F_+] = [P_+,F_+] = [x^n,x^{1-n}y] = n,
$$
where $P_+$ and $F_+$ are taken with respect to the $(1,0)$-grading, and
$$
\deg_x([P\!-\!P_+,F]),\deg_x([P_+,F\!-\!F_+])<\deg_x(P)\!+\!\deg_x(F)\!-\!1\le \deg(P)\!+\!\deg_x(F)\!-\!1 =0,
$$
we have
$$
[P,F] = n + \text{terms with $\deg_x$ lesser that $0$.}
$$
Moreover, using that
$$
C^n=P\qquad\text{and}\qquad Q=\sum_{i=0}^{m+n-2}\lambda_i C^{m-i}+F,
$$
we obtain that $\deg(P) = n$, $\deg(Q) = m$ and $[P,Q] = [P,F]$. Hence, neither $\deg(P)$
divides $\deg(Q)$ nor $\deg(Q)$ divides $\deg(P)$ and, since $[P,Q]\in R$, we also have
$$
[P,Q] = [P,F] = n\in K^{\times}.
$$
Consequently, by Proposition~\ref{contraejemplo subrectangular} the JC is false.
\end{proof}

\begin{remark}\label{re 1.9} The proof of the theorem shows that if $(P,Q)$ is a Jacobian
pair such that neither $\deg(P)$ divides $\deg(Q)$ nor $\deg(Q)$ divides $\deg(P)$, then
there is an affine change of variables that transforms it into a pair that satisfies the
conditions of the statement of Theorem~\ref{principal}. Note that a such change of
variables does not change neither $\deg(P)$ nor $\deg(Q)$.
\end{remark}

\begin{definition}\label{solucion de sistema} Let $D$ be a $K$-algebra, $n,m\in\mathds{N}$
such that $n\nmid m$ and $m\nmid n$, $(\lambda_i)_{1\le i\le n+m-2}$ a family of elements
of $K$ with $\lambda_0=1$ and $F_{1-n}\in D$. We say that $C \in D((x^{-1}))$ is a
{\em solution} of the system $S(n,m,(\lambda_i),F_{1-n})$, if $C$ has the form
$$
C = x + C_{-1}x^{-1}+ C_{-2}x^{-2} + \cdots \qquad\text{with each $C_{-i}\in D$,}
$$
and there exist $P,Q\in D[x]$ and $F \in D[[x^{-1}]]$, such that
\begin{align}
& F = F_{1-n} x^{1-n} + F_{-n} x^{-n} + F_{-1-n} x^{-1-n} +\cdots,\label{forma de F}\\
&P=C^n\qquad\text{and}\qquad Q=\sum_{i=0}^{m+n-2}\lambda_i C^{m-i}+F.\label{forma de P y de Q}
\end{align}
\end{definition}

Note that the polynomial $Q$ does not depend on $F$ since $\deg_x(F)\!<0$. We say that $(P,Q)$
is the {\em pair associated with the solution $C$} and we call $P,Q$ {\em the po\-ly\-no\-mials
associated with the solution $C$}.

\smallskip

From now on, when we mention a system $S(n,m,(\lambda_i),F_{1-n})$, unless otherwise specified,
we will assume that $n\nmid m$ and $m\nmid n$.

\begin{corollary}\label{corolario} The Jacobian conjecture is false if and only if for $D:=K[y]$
there exist

\begin{itemize}

\smallskip

\item[-] $n,m\in \mathds{N}$, such that $n\nmid m$ and $m\nmid n$,

\smallskip

\item[-] a family $(\lambda)_{0\le i \le m+n}$ of elements of $K$ with $\lambda_0=1$,

\smallskip

\item[-] a solution $C\in D((x^{-1}))$ of $S(n,m,(\lambda_i),y)$ such that
$$
\qquad\quad \deg(C) = 1\quad\text{and}\quad \deg(F) = 2-n,
$$
where $F$ is as in Definition~\ref{solucion de sistema}.

\smallskip

\end{itemize}
\end{corollary}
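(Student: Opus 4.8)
The plan is to deduce Corollary~\ref{corolario} directly from Theorem~\ref{principal} and Remark~\ref{re 1.9}, by translating the list of data in the theorem into the language of Definition~\ref{solucion de sistema}. First I would prove the forward implication: assuming JC is false, Theorem~\ref{principal} supplies $P,Q\in R$, $C,F\in K[y]((x^{-1}))$, integers $n,m$ with $n\nmid m$ and $m\nmid n$, and scalars $\lambda_0=1,\dots,\lambda_{m+n-2}\in K$ with $C=x+C_{-1}x^{-1}+\cdots$ ($C_{-i}\in K[y]$), $\deg(C)=1$, $\deg(F)=2-n$, $F_+=x^{1-n}y$ with respect to the $(1,0)$-grading, $C^n=P$, and $Q=\sum_{i=0}^{m+n-2}\lambda_i C^{m-i}+F$. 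The only thing to check is that these are exactly a solution of $S(n,m,(\lambda_i),y)$ in the sense of Definition~\ref{solucion de sistema}: namely that $P,Q\in D[x]$ with $D=K[y]$, that $F\in D[[x^{-1}]]$ has the shape \eqref{forma de F} with $F_{1-n}=y$, and that \eqref{forma de P y de Q} holds. Since $C^n=P$ and $C$ has $\mathbf{w}\!\deg$ equal to $1$ with polynomial coefficients, $P$ is a genuine polynomial in $x$ with coefficients in $K[y]$, and similarly $Q=\sum\lambda_i C^{m-i}+F\in K[y][x]$ because $\deg_x(F)<0$; the condition $F_+=x^{1-n}y$ together with $\deg(F)=2-n$ forces $F=yx^{1-n}+F_{-n}x^{-n}+\cdots$, i.e. $F_{1-n}=y$ and the expansion \eqref{forma de F}. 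The degree conditions $\deg(C)=1$ and $\deg(F)=2-n$ are then inherited verbatim.

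For the converse I would start from the data listed in the corollary: $n,m$ with $n\nmid m$, $m\nmid n$, scalars $(\lambda_i)$ with $\lambda_0=1$, and a solution $C\in K[y]((x^{-1}))$ of $S(n,m,(\lambda_i),y)$ with $\deg(C)=1$ and $\deg(F)=2-n$. Unwinding Definition~\ref{solucion de sistema} with $F_{1-n}=y$, we get $P=C^n$, $Q=\sum_{i=0}^{m+n-2}\lambda_i C^{m-i}+F$, and $F=yx^{1-n}+F_{-n}x^{-n}+\cdots$. Since $\deg(F)=2-n$ the series $F$ in fact stops being zero exactly at the $x^{1-n}$ term, so $F_+=x^{1-n}y$ with respect to the $(1,0)$-grading. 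Thus all the hypotheses of the $\Leftarrow$ direction of Theorem~\ref{principal} are met, and that theorem yields that JC is false (and moreover that $(P,Q)$ is a counterexample with $\deg P=n$, $\deg Q=m$).

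I do not expect any serious obstacle here: the corollary is essentially a repackaging of Theorem~\ref{principal} once the terminology of Definition~\ref{solucion de sistema} is in place. The one point that deserves a careful sentence is the equivalence, for the $F$ produced by Theorem~\ref{principal}, between ``$F_+=x^{1-n}y$ in the $(1,0)$-grading and $\deg(F)=2-n$'' on one side and ``$F=yx^{1-n}+F_{-n}x^{-n}+\cdots$ with $F_{1-n}=y$'' on the other; this is immediate because for $H\in K[y]((x^{-1}))$ the $(1,0)$-degree is just $\deg_x(H)$ and the $(1,0)$-leading part records the coefficient of the top power of $x$, so $\deg(F)=2-n$ combined with $\deg_x(F)\le 1-n$ pins down the coefficient of $x^{1-n}$ as a polynomial of degree exactly $1$ in $y$, which the normalization in Theorem~\ref{principal} makes equal to $y$. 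The remaining verifications — that $P=C^n\in K[y][x]$ and $Q\in K[y][x]$, and that the integer and scalar data match up — are the routine bookkeeping already carried out in the proof of Theorem~\ref{principal}, so I would simply cite that theorem and Remark~\ref{re 1.9} and let the correspondence be checked by inspection.
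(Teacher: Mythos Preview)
Your proposal is correct and matches the paper's approach: the paper states Corollary~\ref{corolario} without proof, precisely because it is an immediate reformulation of Theorem~\ref{principal} in the language of Definition~\ref{solucion de sistema}, and your argument simply makes that translation explicit. The one superfluous citation is Remark~\ref{re 1.9}, which is not actually needed here (it concerns normalizing an arbitrary counterexample, whereas Theorem~\ref{principal} already hands you the data in the required form).
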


Let $A$ be an arbitrary $K$-algebra. In the sequel for each $E\!\in\! A((x^{-1}))$ and
$k\!\in\! \mathds{Z}$ we let $E_k$ denote the coefficient of $x^k$ in $E$.

\begin{remark}\label{re 1.21} Let $S(n,m,(\lambda_i),F_{1-n})$ be as in
Definition~\ref{solucion de sistema} and let $A$ be the polynomial
$K$-algebra $D[Z_{-1},Z_{-2},Z_{-3},\dots]$ in the indeterminates $Z_v$, with $v<0$.
Consider the Laurent series
$$
Z := x+Z_{-1}x^{-1}+Z_{-2}x^{-2}+\cdots\in A((x^{-1})).
$$
If $C\!\in\! D((x^{-1}))$ is a solution of $S(n,m,(\lambda_i),F_{1-n})$, then the
coefficients $C_{-1},\dots,C_{-m-n+2}$ satisfy the $m+n-2$ equations
\begin{equation}
\begin{aligned}
(Z^n)_{-k} & = 0, && \text{for $k=1,\dots, m-1$,}\\
\left(\sum_{i=0}^{m+n-2}\lambda_i Z^{m-i}\right)_{-k} & = 0, &&\text{for $k=1,\dots,n-2$,}\\
\left(\sum_{i=0}^{m+n-2}\lambda_i Z^{m-i}\right)_{1-n} + F_{1-n} & = 0.
\end{aligned}\label{sistema de ecuaciones}
\end{equation}
(Note that $Z_{-n-m+2}$ is the the lowest degree coefficient of $Z$ which appears
in the system. It appears in the equation $(Z^n)_{1-m}=0$ and in the last equation).

\smallskip

Conversely, if $C_{\!-1},\dots,C_{\!-m-n+2}\!\in\! D$ satisfy the equation
system~\eqref{sistema de ecuaciones}, then there exist u\-ni\-que
$$
C_{-m-n+1},C_{-m-n}, C_{-m-n-1},\dots\in D,
$$
such that
\begin{equation}
C := x + C_{-1}x^{-1}+ C_{-2}x^{-2}+ C_{-3}x^{-3} + \cdots \label{solucion asociada}
\end{equation}
is a solution of $S(n,m,(\lambda_i),F_{1-n})$. In fact, let $j\in \mathds{N}_0$ and
assume we have proven that there exist unique
$$
C_{-m-n-i+2}\in D \qquad\text{where $i$ runs from $1$ to $j$,}
$$
such that $C_{-1},\dots, C_{-m-n-j+2}$ satisfy
\begin{equation}
(Z^n)_{-k} = 0\qquad\text{for $k=1,\dots,m-1+j$.}\label{eqqq1}
\end{equation}
Since
$$
(Z^n)_{-m-j} = H + n Z_{-m-n-j+1},
$$
where $H$ is a sum of monomials of $K[Z_{-1},\dots,Z_{-m-n-j+2}]$, we can solve
$Z_{-m-n-j+1}$ univocally in the equation
$$
0 = H(C_{-1},\dots,C_{-m-n-j+2}) + nZ_{-m-n-j+1}.
$$
So, there exists a unique $C_{-m-n-j+1}\in D$ such that $C_{-1},\dots, C_{-m-n-j+1}$ satisfy
\begin{equation*}
(Z^n)_{-m-j} = 0.
\end{equation*}
It is evident that $\bigl(C_{-1},\dots, C_{-m-n-j}\bigr)$ satisfies the system of
equations~\eqref{eqqq1}, since $Z_{-m-n-j+1}$ does not appear in that system. In
order to finish the proof we only must note that
$$
F = F_{1-n} x^{1-n} + F_{-n} x^{-n} + F_{-1-n} x^{-1-n} +\cdots,
$$
is univocally determined by the equations
$$
\left(\sum_{i=0}^{m+n-2}\lambda_i Z^{m-i}\right)_{-k} + F_{-k}  = 0\qquad\text{for $k\ge n$.}
$$
\end{remark}

\begin{definition}\label{sistema de ecuaciones asociado con} We will write
$S_t(n,m,(\lambda_i),F_{1-n})$ to denote the system of equations~\eqref{sistema de ecuaciones},
and we call it the {\em (standard) system of equations associated with} $S(n,m,(\lambda_i),F_{1-n})$.
\end{definition}

\begin{definition}\label{serie determinada por} Given a solution $C_{-1},\dots,C_{-m-n+2}\in D$
of~\eqref{sistema de ecuaciones}, we call~\eqref{solucion asociada} the
{\em solution of $S(n,m,(\lambda_i),F_{1-n})$ determined by $C_{-1},\dots,C_{-m-n+2}$}.
\end{definition}

\begin{remark}\label{acotacion de grado} Assume that $D = K[y]$. Let $S(n,m,(\lambda_i),y)$
be as in Corollary~\ref{corolario} and let
$$
C = x+C_{-1}x^{-1}+C_{-2}x^{-2}+C_{-3}x^{-3}+\dots \in D((x^{-1}))
$$
be a solution of $S(n,m,(\lambda_i),y)$. Note that for $j>-m$,
\begin{align*}
0 &=(C^n)_{-m-j}\\
&=\sum_{i_1+\dots +i_n=-m-j}C_{i_1}C_{i_2}\dots C_{i_n}\\
&= nC_{-m-n-j+1}+ \sum_{{i_1+\dots +i_n=-m-j}\atop{i_k\ne -m-n-j+1\ \forall k}}C_{i_1}C_{i_2}\dots C_{i_n},
\end{align*}
where we set $C_1 = 1$. From this it follows by induction that if
\begin{equation}\label{desigualdad en deg C_{-k} para algunos}
\deg(C_{-k})\le k+1\qquad\text{for $k=1,2,\dots,m+n-2$,}
\end{equation}
then
\begin{equation}\label{desigualdad en deg C_{-k} para todos}
\deg(C_{-k})\le k+1 \qquad\text{for all $k\ge 1$.}
\end{equation}
Note also that equality in~\eqref{desigualdad en deg C_{-k} para algunos} implies
equality in~\eqref{desigualdad en deg C_{-k} para todos}.  A similar argument proves that
under the same hypothesis,
$$
\deg(F_{-k})\le 2-n+k \qquad\text{for all $k\ge n$.}
$$
\end{remark}

Resuming the results of this section we have the following corollary.

\begin{corollary}\label{caracterizacion} There exists a counterexample $(P,Q)$ to JC with
$(\deg(P),\deg(Q))=(m,n)$ if and only if there exist $\lambda_1,\dots, \lambda_{m+n-2}\in K$
such that the standard system  $S_t(n,m,(\lambda_i),y)$ has a solution in $K[y]^{m+n-2}$.
\end{corollary}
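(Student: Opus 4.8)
The plan is to read off Corollary~\ref{caracterizacion} from the results already established in the section, chiefly Corollary~\ref{corolario}, Remark~\ref{re 1.21} and Remark~\ref{acotacion de grado}. Corollary~\ref{corolario} already rephrases the falsity of JC as the existence, for $D=K[y]$, of integers $n,m$ (with $n\nmid m$, $m\nmid n$), scalars $\lambda_i$ with $\lambda_0=1$, and a solution $C\in K[y]((x^{-1}))$ of the series system $S(n,m,(\lambda_i),y)$ with $\deg(C)=1$ and $\deg(F)=2-n$. Remark~\ref{re 1.21} identifies such a $C$ with the tuple of its first $m+n-2$ coefficients $C_{-1},\dots,C_{-m-n+2}\in K[y]$, and those tuples are exactly the solutions of the standard system $S_t(n,m,(\lambda_i),y)$. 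Finally Remark~\ref{acotacion de grado} converts the two degree conditions on $C$ and $F$ into the finitely many inequalities $\deg(C_{-k})\le k+1$ for $1\le k\le m+n-2$. So the proof is a matter of stitching these three facts together, keeping track of the order of $(n,m)$ and of the degree conditions.

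($\Rightarrow$) Let $(P,Q)$ be a counterexample to JC with $\{\deg(P),\deg(Q)\}=\{m,n\}$. Since $n\nmid m$ and $m\nmid n$, neither degree divides the other, so, after possibly interchanging $P$ and $Q$ so that the polynomial of degree $n$ is the first one, Remark~\ref{re 1.9} gives a degree-preserving affine change of variables taking $(P,Q)$ into a pair satisfying the hypotheses of Theorem~\ref{principal}; call $C$, $F$ and $\lambda_0=1,\dots,\lambda_{m+n-2}$ the associated data. By Corollary~\ref{corolario}, $C$ is a solution of $S(n,m,(\lambda_i),y)$ with $\deg(C)=1$ and $\deg(F)=2-n$, so by Remark~\ref{re 1.21} the tuple $(C_{-1},\dots,C_{-m-n+2})\in K[y]^{m+n-2}$ solves $S_t(n,m,(\lambda_i),y)$.

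($\Leftarrow$) Conversely, let $\lambda_0=1,\lambda_1,\dots,\lambda_{m+n-2}\in K$ and let $(C_{-1},\dots,C_{-m-n+2})\in K[y]^{m+n-2}$ be a solution of $S_t(n,m,(\lambda_i),y)$. By Remark~\ref{re 1.21} it extends uniquely to a solution $C\in K[y]((x^{-1}))$ of $S(n,m,(\lambda_i),y)$, with associated $P,Q\in K[y][x]$ and $F$. To apply Corollary~\ref{corolario} we must check that $\deg(C)=1$ and $\deg(F)=2-n$; by Remark~\ref{acotacion de grado} it is enough to know that $\deg(C_{-k})\le k+1$ for $k=1,\dots,m+n-2$, since then the bounds hold for every $k$ and, as $F_{1-n}=y$, also $\deg(F)=2-n$. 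Granting this, Corollary~\ref{corolario} yields that JC is false, and, unwinding the construction exactly as in the ``$\Leftarrow$'' part of the proof of Theorem~\ref{principal} (where $\deg(C)=1$ is what forces $\deg(P)=n$ and $\deg(Q)=m$), the associated $(P,Q)$ is a counterexample with $\deg(P)=n$ and $\deg(Q)=m$; interchanging $P$ and $Q$ gives a counterexample with $(\deg(P),\deg(Q))=(m,n)$.

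The genuinely new ingredient, and the step I expect to be the main obstacle, is the degree estimate invoked in ($\Leftarrow$): that \emph{every} solution of $S_t(n,m,(\lambda_i),y)$ in $K[y]^{m+n-2}$ automatically satisfies $\deg(C_{-k})\le k+1$. I would prove this by exploiting the triangular shape of the system. The equations $(Z^n)_{-k}=0$ for $1\le k\le m-1$ can be rewritten as $n\,C_{-k}=-(\text{a polynomial in }C_{-1},\dots,C_{-(k-2)})$ for $n\le k\le m+n-2$, so, running the $y$-degree bookkeeping of Remark~\ref{acotacion de grado}, one sees by induction on $k$ that the claim reduces to bounding the ``free'' coefficients $C_{-1},\dots,C_{-(n-1)}$; these are pinned down by the remaining $n-1$ equations, the ones built from $\sum_i\lambda_i Z^{m-i}$, and the required bound for them has to be squeezed out of those equations — the decisive one being the last, whose inhomogeneous term $-y$ has $y$-degree exactly $1$. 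Making this weighted-degree argument rigorous, i.e.\ essentially running the estimate of Remark~\ref{acotacion de grado} in reverse, is where the actual work lies; everything else is bookkeeping with results already in hand.
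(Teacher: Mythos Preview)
Your ($\Rightarrow$) direction is essentially the paper's intended argument: it is precisely the passage from Theorem~\ref{principal}/Remark~\ref{re 1.9} to the tuple $(C_{-1},\dots,C_{-m-n+2})$ via Remark~\ref{re 1.21}. Nothing to add there.

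For ($\Leftarrow$) you have put your finger on a genuine issue. The paper states Corollary~\ref{caracterizacion} with no proof, writing only ``Resuming the results of this section\dots''. But, as you observe, Corollary~\ref{corolario} and the $\Leftarrow$ half of Theorem~\ref{principal} both \emph{assume} $\deg(C)=1$ and $\deg(F)=2-n$, and those hypotheses are exactly what guarantee $\deg(P)=n$ and $\deg(Q)=m$. Remark~\ref{acotacion de grado} only propagates the bounds $\deg(C_{-k})\le k+1$ from the first $m+n-2$ indices to all of them; it does not manufacture them out of the bare system $S_t(n,m,(\lambda_i),y)$. So either the paper is tacitly reading ``solution in $K[y]^{m+n-2}$'' as ``solution satisfying $\deg(C_{-k})\le k+1$'', in which case the corollary is indeed an immediate repackaging of Corollary~\ref{corolario} plus Remarks~\ref{re 1.21} and~\ref{acotacion de grado}, or the literal statement requires the extra lemma you are trying to supply.

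Where I think your proposed fix runs into trouble is the base case. Your triangular rewriting is correct: from $(C^n)_{-k}=0$ one solves $C_{-(k+n-1)}$ as a polynomial in $C_{-1},\dots,C_{-(k+n-2)}$ (your index ``$C_{-(k-2)}$'' is a slip), and a weighted-degree count then shows that \emph{if} $\deg(C_{-j})\le j+1$ for $j=1,\dots,n-1$ then the same bound holds for all $j$. But nothing in the remaining $n-1$ equations --- built from $\sum_i\lambda_iZ^{m-i}$ with the free scalars $\lambda_1,\dots,\lambda_{m+n-2}$ still at your disposal --- forces those first $n-1$ bounds. The single inhomogeneous datum ``$-y$'' sits in one equation, and the other $n-2$ equations are homogeneous constraints whose weighted structure does not, by itself, bound $\deg_y(C_{-1}),\dots,\deg_y(C_{-(n-1)})$ from above. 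A ``reverse Remark~\ref{acotacion de grado}'' argument would need an additional idea (for instance, an a priori reason why the associated $(P,Q)$, which you do know satisfies $[P,Q]=n$, cannot have $\deg(P)>n$), and that idea is not in Section~1.

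In short: your diagnosis of the gap is accurate, your ($\Rightarrow$) is the paper's argument, but the sketch you offer for closing the gap in ($\Leftarrow$) does not go through as written, and the paper itself does not supply the missing step.
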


\section[Properties of solutions of $S(n,m,(\lambda_i),F_{1-n})$]{Properties of solutions of
$\bm{S(n,m,(\lambda_i),F_{1-n})}$}
In this section we show that under suitable conditions the system~$S(n,m,(\lambda_i),F_{1-n})$
has only finitely many solutions. This applies in particular to the case related with the
Jacobian conjecture.

\setcounter{equation}{0}

\begin{lemma} Let $Z$ be as in Remark~\ref{re 1.21}. For all $i\in \mathds{N}$ and
$k,l\in \mathds{Z}$, the equality
$$
\frac{\partial (Z^i)_{k}}{\partial Z_l}=i (Z^{i-1})_{k-l}
$$
holds.
\end{lemma}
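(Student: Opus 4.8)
The plan is to work directly in the polynomial ring $A = D[Z_{-1},Z_{-2},\dots]$ with the Laurent series $Z = x + \sum_{v<0} Z_v x^v$, and to exploit the fact that differentiation with respect to $Z_l$ and the operation of extracting the coefficient of $x^k$ are both $D$-linear and commute with each other. First I would write $Z = x + W$, where $W := \sum_{v\le -1} Z_v x^v$, so that $Z^i = \sum_{p=0}^{i}\binom{i}{p} x^{i-p} W^p$ and each $(Z^i)_k$ is a polynomial in the $Z_v$'s whose partial derivatives make sense term by term.

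The key computation is the product-rule expansion of $\partial(Z^i)/\partial Z_l$. Since $\partial Z/\partial Z_l = x^l$ (the variable $Z_l$ occurs in $Z$ only in the single monomial $Z_l x^l$), the chain/product rule in $A((x^{-1}))$ gives
$$
\frac{\partial (Z^i)}{\partial Z_l} = \sum_{r=0}^{i-1} Z^r \left(\frac{\partial Z}{\partial Z_l}\right) Z^{i-1-r} = i\, Z^{i-1} x^l,
$$
where the last equality uses that $A((x^{-1}))$ is commutative. Here one should be a little careful that termwise differentiation of the (infinite in the negative direction) Laurent series is legitimate: for each fixed $k$, the coefficient $(Z^i)_k$ is a finite $D$-linear combination of monomials in the $Z_v$, so $\partial(Z^i)_k/\partial Z_l$ is well defined, and the identity above is really the collection of these coefficientwise identities packaged into one series equation. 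Then I would extract the coefficient of $x^k$ on both sides: on the left it is $\partial(Z^i)_k/\partial Z_l$ by linearity of the derivation and of coefficient extraction, and on the right the coefficient of $x^k$ in $i\,Z^{i-1}x^l$ is $i$ times the coefficient of $x^{k-l}$ in $Z^{i-1}$, namely $i\,(Z^{i-1})_{k-l}$. This yields the claimed formula.

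The only genuine subtlety—hence the step I would treat most carefully—is the interchange of $\partial/\partial Z_l$ with the infinite summation defining $Z^i$; everything else is the formal product rule together with commutativity of $A((x^{-1}))$. Handling a small boundary issue is also worth a remark: when $i = 1$ the statement reads $\partial Z_k / \partial Z_l = (Z^0)_{k-l} = \delta_{k,l}$, which is correct since $Z^0 = 1$ and $Z_k$ (for $k<0$) is one of the indeterminates while $Z_1 = 1$; and for $i \le 0$ one can either restrict to $i\in\mathds{N}$ as stated, or note that $Z$ is invertible in $A((x^{-1}))$ so $Z^i$ still makes sense and the same product-rule argument applies verbatim. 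I would state and prove it for $i\in\mathds{N}$ as written, since that is all that is needed for the system $S_t(n,m,(\lambda_i),F_{1-n})$.
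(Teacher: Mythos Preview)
Your proof is correct and follows essentially the same approach as the paper: compute $\partial(Z^i)/\partial Z_l = i Z^{i-1}\,\partial Z/\partial Z_l = i Z^{i-1} x^l$ via the product rule, then extract the coefficient of $x^k$ on both sides. The paper presents this as a single chain of equalities without the additional care you take in justifying the interchange of differentiation with the infinite sum, but the argument is the same.
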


\begin{proof} Since
$$
\sum_k \frac{\partial (Z^i)_k}{\partial Z_l} x^k=\frac{\partial \bigl(\sum_k (Z^i)_k x^k\bigr)}{\partial Z_l}=
 \frac{\partial Z^i}{\partial Z_l}=i Z^{i-1}\frac{\partial Z}{\partial Z_l}=i Z^{i-1}x^l= \sum_j i(Z^{i-1})_j
 x^{j+l},
$$
we have
$$
\frac{\partial (Z^i)_{k}}{\partial Z_l}=i (Z^{i-1})_{k-l},
$$
as desired.
\end{proof}

Let $S(n,m,(\lambda_i),F_{1-n})$ be as in Definition~\ref{solucion de sistema}. Let $Z$ and $A$
be as in Remark~\ref{re 1.21}. Consider the polynomials
$$
E_1,\dots,E_{m+n-2}\in D[Z_{-1},\dots,Z_{-m-n+2}],
$$
defined by
$$
E_i := \begin{cases} (Z^n)_{-i} &\text{for $1\le i < m$,}\\[5pt]
\left(\sum_{k=0}^{m+n-2}\lambda_k Z^{m-k}\right)_{m-i-1} &\text{for $m\le i< m+n-2$,}\\[5pt]
\left(\sum_{k=0}^{m+n-2}\lambda_k Z^{m-k}\right)_{1-n}+F_{1-n}&\text{for $i=m+n-2$,}\end{cases}
$$
and set
$$
J:= \begin{pmatrix}
\frac{\partial E_1}{\partial Z_{-1}} &\dots & \frac{\partial E_1}{\partial Z_{-m-n+2}}\\
\vdots & \ddots & \vdots\\
\frac{\partial E_{m+n-2}}{\partial Z_{-1}} &\dots & \frac{\partial E_{m+n-2}}{\partial Z_{-m-n+2}}\\
\end{pmatrix}.
$$
Note that since $J$ is a matrix in $D[Z_{-1},\dots,Z_{-m-n+2}]$ it makes sense to evaluate it
in the tuple $\bigl(C_{-1},\dots,C_{-m-n+2}\bigr)$. Let
\begin{equation}
G := \sum_{k=0}^{m+n-2}\lambda_k (m-k) Z^{m-k-1}.\label{eq6}
\end{equation}
By the previous lemma we know that
$$
\frac{\partial E_i}{\partial Z_{-j}} = \begin{cases} n(Z^{n-1})_{j-i} & \text{for $1\le i < m$,}\\
G_{m+j-i-1} & \text{for $m\le i< m+n-1$.}\end{cases}
$$
Since
$$
\deg(Z^{n-1}) = n-1 \quad\text{and}\quad \deg(G) = m-1,
$$
this implies that $J$ is the matrix $(Y_{ij})\in M_{m+n-2}(A)$  given by
$$
Y_{ij} : = \begin{cases} n(Z^{n-1})_{j-i} & \text{if $1\le i< m$ and $1\le j< n+i$,}\\
G_{m+j-i-1} & \text{if $m\le i < m+n-1$ and $1\le j\le i$,} \\ 0 & \text{otherwise.}\end{cases}
$$
In other words
\begin{equation}\label{eq3}
J = \begin{pmatrix}
J^{(1)}\\  J^{(2)}
\end{pmatrix},
\end{equation}
where
\begin{equation}
J^{(1)}\in M_{(m-1)\times(m+n-2)}(A)\quad\text{and}\quad J^{(2)}\in M_{(n-1)\times(m+n-2)}(A)\label{eq4}
\end{equation}
are the matrices
$$
J^{(1)} := \begin{pmatrix} n(Z^{n-1})_{0} & \dots & n(Z^{n-1})_{n-1} & & \BigZero\\
\vdots &\ddots &\vdots &\ddots \\
n(Z^{n-1})_{2-m}  &\dots & n(Z^{n-1})_{n-m+1} &\dots & n(Z^{n-1})_{n-1}\\[5pt]
\end{pmatrix}
$$
and
$$
J^{(2)} := \begin{pmatrix} G_{0} &\dots & G_{m-1} && \BigZero\\
\vdots &\ddots &\vdots &\ddots \\
G_{2-n} &\dots & G_{m-n+1} &\dots & G_{m-1}
\end{pmatrix},
$$
respectively.

\smallskip

For each $M\in M_{r\times s}(D)$, we let $\ov{M}$ denote the $k$-linear map, from $D^s$ to $D^r$, given by
$$
\ov{M}(V):= (M V^t)^t\in D^r,
$$
where, as usual, $X^t$ denotes the transpose of $X$. In order to prove
Theorem~\ref{Jinvertible} below, we need to introduce some auxiliary maps.

\begin{definition} We define the maps
\begin{align*}
&\Pi_1\colon D((x^{-1}))\to D^{m-1}&&\text{by} &&\Pi_1(f):=(f_{-1},f_{-2},\dots,f_{1-m}), \\
&\Pi_2\colon D((x^{-1}))\to D^{n-1}&&\text{by} &&\Pi_2(f):=(f_{-1},f_{-2},\dots,f_{1-n}),\\
&\Gamma_1\colon D^{m-1}\to D((x^{-1}))&&\text{by} &&\Gamma_1(d_1,\dots,d_{1-m}):=
d_1 x^{-1}+\dots+d_{1-m}x^{1-m},\\
&\Gamma_2\colon D^{n-1}\to D((x^{-1}))&&\text{by} &&\Gamma_2(d_1,\dots,d_{1-n}):=
d_1 x^{-1}+\dots+ d_{1-n} x^{1-n}.
\end{align*}
\end{definition}

Note that $\Gamma_1$ and $\Gamma_2$ are right inverses to $\Pi_1$ and $\Pi_2$,
respectively. We will also need the map
$$
\Pi\colon D((x^{-1}))\longrightarrow D^{m+n-2},
$$
defined by $\Pi(f) := (f_{-1},f_{-2},\dots,f_{-m-n+2})$, and the canonical projections
$$
\Pi_+\colon D((x^{-1}))\to D[x]\qquad\text{and}\qquad \Pi_-\colon D((x^{-1}))\to D[[x^{-1}]].
$$

\begin{theorem}\label{Jinvertible} Let $S(n,m,(\lambda_i),F_{1-n})$ be as in
Definition~\ref{solucion de sistema},
$$
C = x + C_{-1} x^{-1} +  C_{-2} x^{-2} + \cdots \in D((x^{-1}))
$$
a solution of $S(n,m,(\lambda_i),F_{1-n})$ and $(P,Q)$ the pair associated with $C$.
Assume that $D$ is an integral domain over $K$. If there exist $A,B\in D[x]$ such
that $AP'+BQ'=1$, then the matrix $J_{|_{\mathfrak{v}}}$, obtained evaluating $J$ in
$$
\mathfrak{v}:=\bigl(C_{-1},\dots,C_{-m-n+2}\bigr)\in D^{m+n-2},
$$
is invertible.
\end{theorem}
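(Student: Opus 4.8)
The plan is to show that $\det J_{|_{\mathfrak{v}}}$ equals, up to a unit of $K$, the resultant $\operatorname{Res}_x(P',Q')$ of $P'$ and $Q'$ regarded as polynomials in $x$ over $D$, and then to deduce from $AP'+BQ'=1$ that this resultant is a unit of $D$. Write $C':=\partial_xC$, $F':=\partial_xF$ and $G_{|_{\mathfrak{v}}}:=\sum_{k=0}^{m+n-2}\lambda_k(m-k)C^{m-k-1}$, the value at $\mathfrak{v}$ of the polynomial $G$ of~\eqref{eq6}. Recall that $C'=1+O(x^{-2})$ is a unit of $D[[x^{-1}]]$, that $P=C^n$ is monic of degree $n$ in $x$ so $\deg_xP'=n-1$ with leading coefficient $n\in K^\times$, and that $Q=\sum\lambda_iC^{m-i}+F$ is monic of degree $m$ so $\deg_xQ'=m-1$ with leading coefficient $m\in K^\times$.

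First I would rewrite $\ov{J_{|_{\mathfrak{v}}}}$ in terms of Laurent series. By the lemma on $\partial(Z^i)_k/\partial Z_l$, the $i$-th row of $J^{(1)}$ ($1\le i\le m-1$) has $j$-th entry $n(C^{n-1})_{j-i}$, while the $i'$-th row of $J^{(2)}$ ($1\le i'\le n-1$) has $j$-th entry $(G_{|_{\mathfrak{v}}})_{j-i'}$; since the coefficient of $x^{-i}$ in $Hg$ equals $\sum_jH_{j-i}v_j$ whenever $g=\sum_jv_jx^{-j}$, it follows that for $V=(v_1,\dots,v_{m+n-2})$ and $g:=v_1x^{-1}+\dots+v_{m+n-2}x^{2-m-n}$,
$$\ov{J_{|_{\mathfrak{v}}}}(V)=\bigl(n\,\Pi_1(C^{n-1}g),\,\Pi_2(G_{|_{\mathfrak{v}}}g)\bigr).$$
Next I would clear the factor $C'$. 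Differentiating $P=C^n$ and $Q$ gives $P'=nC^{n-1}C'$ and $Q'=C'G_{|_{\mathfrak{v}}}+F'$, so with $h:=g/C'\in D[[x^{-1}]]$ one has $nC^{n-1}g=P'h$ and $G_{|_{\mathfrak{v}}}g=Q'h-F'h$; moreover $\deg_x(F'h)\le -n-1<1-n$, so $\Pi_2(F'h)=0$, and hence
$$\ov{J_{|_{\mathfrak{v}}}}(V)=\bigl(\Pi_1(P'h),\,\Pi_2(Q'h)\bigr).$$

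The key step is now to split $h=h'+h''$, where $h'$ is the part of $h$ of $x$-degree in $\{-1,\dots,2-m-n\}$ and $h''$ the part of $x$-degree $\le 1-m-n$. Since $\deg_x(P'h'')\le -m$ and $\deg_x(Q'h'')\le -n$, replacing $h$ by $h'$ alters neither $\Pi_1(P'h)$ nor $\Pi_2(Q'h)$, so $\ov{J_{|_{\mathfrak{v}}}}=\ov S\circ T$, where $T$ sends $V$ to the coordinate vector $(w_1,\dots,w_{m+n-2})$ with $h'=w_1x^{-1}+\dots+w_{m+n-2}x^{2-m-n}$, and $\ov S$ sends $(w_1,\dots,w_{m+n-2})$ to $\bigl(\Pi_1(P'(w_1x^{-1}+\dots)),\,\Pi_2(Q'(w_1x^{-1}+\dots))\bigr)$. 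The matrix of $\ov S$ is, up to reversing the order of its columns, the Sylvester matrix of $P'$ (of degree $n-1$) and $Q'$ (of degree $m-1$), so its determinant is $\pm\operatorname{Res}_x(P',Q')$; and, since $(C')^{-1}=1+O(x^{-2})$, the matrix $T$ is the leading principal $(m+n-2)\times(m+n-2)$ block of the (lower triangular, unipotent) matrix of multiplication by $(C')^{-1}$ in the basis $x^{-1},x^{-2},\dots$, so $\det T=1$. Therefore $\det J_{|_{\mathfrak{v}}}=\pm\operatorname{Res}_x(P',Q')$.

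Finally I would use the hypothesis: writing $P'=n\widetilde P$ with $\widetilde P$ monic of degree $n-1$, the ring $R:=D[x]/(\widetilde P)$ is $D$-free of rank $n-1$, and $(nA)\widetilde P+BQ'=1$ shows that the class of $Q'$ in $R$ is invertible, hence multiplication by $Q'$ on $R$ has determinant a unit of $D$. Since $\widetilde P$ is monic, this determinant is $\operatorname{Res}_x(\widetilde P,Q')$, and $\operatorname{Res}_x(P',Q')=n^{m-1}\operatorname{Res}_x(\widetilde P,Q')$; thus $\operatorname{Res}_x(P',Q')\in D^\times$, and therefore $\det J_{|_{\mathfrak{v}}}\in D^\times$, i.e.\ $J_{|_{\mathfrak{v}}}$ is invertible. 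The step I expect to need most care is the third one: tracking degrees precisely when $C'$ is cleared and the tail $h''$ is discarded, and confirming that the surviving matrix is the resultant matrix for the honest degrees $n-1$ and $m-1$ (here characteristic zero enters, through $n,m\in K^\times$). If one only wanted $\det J_{|_{\mathfrak{v}}}\ne 0$, resultants could be bypassed: any $V$ in the kernel of $\ov{J_{|_{\mathfrak{v}}}}$ extends to $g=\sum_{j\ge1}\hat v_jx^{-j}$ with $\dot P:=nC^{n-1}g\in D[x]$ of degree $\le n-2$ and $\dot Q:=\Pi_+(G_{|_{\mathfrak{v}}}g)$ of degree $\le m-2$; the identities above give $Q'\dot P-P'\dot Q=0$, whence $P'\mid\dot P$ by the Bézout relation, forcing $\dot P=0$ and then $g=0$.
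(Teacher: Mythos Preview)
Your proof is correct, and it takes a genuinely different route from the paper's. Both arguments begin with the same observation (the paper's claim~\eqref{eq15}): for $g=\sum v_jx^{-j}$ one has $\ov{J_{|_{\mathfrak{v}}}}(V)=(\Pi_1(nC^{n-1}g),\Pi_2(G(C)g))$. From there the paper proceeds by an explicit \emph{surjectivity} argument: given a target $(v_1,v_2)$, it uses the B\'ezout relation $AP'+BQ'=1$ to manufacture polynomials $A_1,A_2$ with $A_1Q'-A_2P'=\Pi_+(V_2P'-V_1Q')$ and then checks directly that $X:=\Pi\bigl((A_1+V_1)/(nC^{n-1})\bigr)=\Pi\bigl((A_2+V_2)/G(C)\bigr)$ is a preimage. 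Your approach instead \emph{factors} the map: clearing $C'$ and truncating the tail yields $J_{|_{\mathfrak{v}}}=S\cdot T$, where $T$ (multiplication by $(C')^{-1}$ followed by truncation) is unipotent lower-triangular, and $S$ is, after reversing the column order, the Sylvester matrix of $P'$ and $Q'$; hence $\det J_{|_{\mathfrak{v}}}=\pm\operatorname{Res}_x(P',Q')$, which is a unit by B\'ezout.

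What each approach buys: the paper's construction is self-contained and produces an explicit right inverse without invoking resultants, while your argument is shorter and yields the additional information that $\det J_{|_{\mathfrak{v}}}$ is, up to a sign, the resultant of $P'$ and $Q'$---a fact not visible in the paper's proof and potentially useful elsewhere. Your degree bookkeeping (for $F'h$, $P'h''$, $Q'h''$, and for the triangularity of $T$) is accurate; the only delicate point, which you flagged yourself, is that $P'$ and $Q'$ really have degrees $n-1$ and $m-1$ with invertible leading coefficients $n,m\in K^\times$, so the Sylvester matrix has the honest size $(m+n-2)\times(m+n-2)$.
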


\begin{proof} Recall that $P = C^n$ and that there exists $F\in D((x^{-1}))$ such that
\begin{equation}\label{formula de F}
F = F_{1-n} x^{1-n} + F_{-n} x^{-n} + F_{-1-n} x^{-1-n} +\cdots,
\end{equation}
and
$$
Q = \sum_{i=0}^{m+n-2}\lambda_i C^{m-i}+F.
$$
Let $G$ be as in~\eqref{eq6}. Note that
$$
G(C) = \sum_{i=0}^{m+n-2}\lambda_i (m-i) C^{m-i-1}
$$
satisfies
\begin{equation}
G(C)C' = Q'-F'.\label{eq14}
\end{equation}
We claim that if $V\in D^{m+n-2}$ and $U\in D((x^{-1}))$ satisfy $\deg_x(U)<0$ and $\Pi(U)=V$, then
\begin{equation}
\ov{J^{(1)}_{|_{\mathfrak{v}}}}(V) = \Pi_1(nC^{n-1} U)\qquad\text{and}\qquad
\ov{J^{(2)}_{|_{\mathfrak{v}}}}(V)=\Pi_2(G(C)U).\label{eq15}
\end{equation}
Let $\ov{J^{(1)}_{|_{\mathfrak{v}}}}(V)_i$ be the $i$-th coordinate of $\ov{J^{(1)}_{|_{\mathfrak{v}}}}(V)$.
Write $V = (v_1,\dots,v_{m+n-2})$. Since
$$
\deg_x(C^{n-1}) = n-1\qquad\text{and}\qquad \deg_x(U)< 0,
$$
we have
\begin{align*}
\Pi_1(nC^{n-1} U)_i & =  \sum_{j\in \mathds{Z}} n(C^{n-1})_j U_{-j-i}\\
& = n(C^{n-1})_{1-i} U_{-1} + \cdots + n(C^{n-1})_{n-1} U_{1-n-i}\\
& = n(C^{n-1})_{1-i} v_1 + \cdots + n(C^{n-1})_{n-1} v_{n+i-1}\\
& = \ov{J^{(1)}_{|_{\mathfrak{v}}}}(V)_i,
\end{align*}
proving the first equality in the claim. The second one is similar.

\smallskip

Now, given $v_1\in D^{m-1}$ and $v_2 \in D^{n-1}$, we set
$$
V := (v_1,v_2)\in D^{m+n-2},\qquad V_1:=\Gamma_1(v_1)\qquad\text{and}\qquad V_2:=\Gamma_2(v_2).
$$
We are going to prove that $V\in  \ov{J_{|_{\mathfrak{v}}}}(D^{m+n-2})$. Define $h\in D[x]$ by
\begin{equation}
h:=\Pi_+(V_2 P'-V_1 Q').\label{eq10}
\end{equation}
Note that
\begin{equation}
\deg_x(h)\le \min(n-2,m-2)<m+n-2.\label{eq7}
\end{equation}
From $AP'+BQ'=1$ we obtain $AhP'+BhQ'=h$. Since the leading term of $P'$ is invertible, there exist unique
$T,A_1\in D[x]$ with
\begin{equation}
\deg_x(A_1)<\deg_x(P')=n-1,\label{eq8}
\end{equation}
such that $hB=TP'+A_1$. Let $A_2:=-Ah-TQ'$. A direct computation shows that
\begin{equation}
A_1 Q' - A_2 P' = h.\label{eq11}
\end{equation}
Using this equality, conditions~\eqref{eq7} and~\eqref{eq8}, and that $\deg_x(Q') = m-1$, we obtain that
\begin{equation}
\deg_x(A_2)<m-1.\label{eq9}
\end{equation}
Note that $nC^{n-1}$ and $G(C)$ have invertible leading terms and hence are invertible in $D((x^{-1}))$.
Moreover, by the definition of $\Gamma_1$ and~\eqref{eq8}, we have
\begin{equation}
\deg_x(A_1+V_1)<n-1,\label{eq12}
\end{equation}
which implies
\begin{equation}
\deg_x\left(\frac{A_1+V_1}{nC^{n-1}}\right)\le (n-2)-(n-1)=-1.\label{eq12,1}
\end{equation}
A similar computation gives
\begin{equation}
\deg_x\left(\frac{A_2+V_2}{G(C)} \right)\le -1.\label{eq12,2}
\end{equation}
On the other hand, by~\eqref{eq14},
$$
(A_1+V_1)G(C)C' = (A_1+V_1)(Q'-F') = A_1Q'+V_1Q'- A_1F'-V_1F',
$$
and, by the fact that $P=C^n$ and equality~\eqref{eq11},
$$
(A_2+V_2)nC^{n-1}C' = (A_2+V_2)P' = A_1Q'-h+V_2P'.
$$
So,
\begin{align*}
\Pi\left( \frac{A_2+V_2}{G(C)}-\frac{A_1+V_1}{nC^{n-1}}\right)&= \Pi\left(\frac{V_2 P'-V_1
Q'-h+(A_1+V_1)F'}{nC^{n-1}G(C)C'}\right)\\
&=\Pi\left(\frac{\Pi_-(V_2 P'-V_1 Q')+(A_1+V_1)F'}{nC^{n-1}G(C)C'}\right)\\
&=0,
\end{align*}
where the second equality follows from~\eqref{eq10} and the last one from the facts that
$$
\deg_x(nC^{n-1}G(C)C')=(n-1)+(m-1)=m+n-2.
$$
and, by~\eqref{formula de F} and~\eqref{eq12},
$$
\deg_x((A_1+V_1)F')\le -1.
$$
We set
$$
X:=\Pi\left(\frac{A_2+V_2}{G(C)}\right)=\Pi\left(\frac{A_1+V_1}{nC^{n-1}}\right).
$$
Then, by~\eqref{eq12,1}, \eqref{eq12,2}, \eqref{eq15} and the fact that $A_1$ and $A_2$
are polynomials, we have
$$
\ov{J^{(1)}_{|_{\mathfrak{v}}}}(X)=\Pi_1\left(\frac{A_1+V_1}{nC^{n-1}}nC^{n-1}\right)=\Pi_1(A_1+V_1)=\Pi_1(V_1)=v_1
$$
and
$$
\ov{J^{(2)}_{|_{\mathfrak{v}}}}(X)=\Pi_2\left(\frac{A_2+V_2}{G(C)}G(C)\right)=\Pi_2(A_2+V_2)=\Pi_2(V_2)=v_2,
$$
which finishes the proof.
\end{proof}

\begin{corollary}\label{Jinvertiblecoro} Assume that we are under the hypothesis of
Theorem~\ref{Jinvertible} and that $D$ is an integral domain. Then $S(n,m,(\lambda_i),F_{1-n})$
has finitely many solutions.
\end{corollary}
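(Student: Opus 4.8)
The plan is to deduce the finiteness of the solution set of $S(n,m,(\lambda_i),F_{1-n})$ from Theorem~\ref{Jinvertible} via a standard dimension-theoretic argument. By Remark~\ref{re 1.21}, a solution $C$ of the system $S(n,m,(\lambda_i),F_{1-n})$ is completely determined by the tuple $\mathfrak{v}=(C_{-1},\dots,C_{-m-n+2})\in D^{m+n-2}$, and such tuples correspond bijectively to the common zeros (in $D^{m+n-2}$) of the polynomials $E_1,\dots,E_{m+n-2}\in D[Z_{-1},\dots,Z_{-m-n+2}]$ that define the standard system $S_t(n,m,(\lambda_i),F_{1-n})$. So it suffices to show that the affine scheme $X:=\operatorname{Spec}\bigl(D[Z_{-1},\dots,Z_{-m-n+2}]/(E_1,\dots,E_{m+n-2})\bigr)$ has only finitely many $D$-points, and for this I would first prove the stronger statement that, over the fraction field $L:=\operatorname{Frac}(D)$, the ideal $(E_1,\dots,E_{m+n-2})$ cuts out a finite subscheme of $\mathds{A}^{m+n-2}_L$, hence a fortiori that there are only finitely many $D$-valued solutions.

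The key point is that Theorem~\ref{Jinvertible} tells us exactly that at every solution the Jacobian matrix $J$ of the system is invertible. First I would observe that the hypothesis of Theorem~\ref{Jinvertible}, namely the existence of $A,B\in D[x]$ with $AP'+BQ'=1$ where $(P,Q)$ is the pair associated with $C$, is automatic in the situation of the corollary: it is part of the standing hypothesis carried over from Theorem~\ref{Jinvertible}. Thus every solution $\mathfrak{v}$ is a point where $J_{|_{\mathfrak{v}}}\in M_{m+n-2}(D)$ is invertible, i.e. $\det(J)$ does not vanish at $\mathfrak{v}$. Consequently every solution corresponds to a \emph{smooth} point of $X$ of relative dimension $0$ over $D$: the $m+n-2$ equations in $m+n-2$ variables have nondegenerate Jacobian there.

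The main step is then to pass from ``every point is isolated'' to ``there are finitely many points''. Over $L=\operatorname{Frac}(D)$, let $Y$ be the closed subscheme of $\mathds{A}^{m+n-2}_L$ defined by $E_1,\dots,E_{m+n-2}$. At each closed point of $Y$ lying over a $D$-solution the Jacobian is invertible, so such a point is isolated in $Y$ and $Y$ is zero-dimensional at it. The cleanest way to finish is: the vanishing of $\det(J)$ defines a proper closed subset $W\subsetneq Y$ (proper because there is at least the relevant solution outside it — or, if one prefers, simply intersect with the open locus $\det(J)\neq 0$), and on the open complement $Y\setminus W$ the morphism to $\operatorname{Spec} L$ is unramified, hence quasi-finite; being also of finite type it has finite fibres. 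Therefore $Y$ has only finitely many points at which $\det(J)\neq 0$, and since every $D$-solution gives such a point, there are at most finitely many tuples $\mathfrak{v}\in D^{m+n-2}$ satisfying $S_t(n,m,(\lambda_i),F_{1-n})$, hence at most finitely many solutions $C\in D((x^{-1}))$ of $S(n,m,(\lambda_i),F_{1-n})$ by the bijection of Remark~\ref{re 1.21}.

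I expect the main obstacle to be purely bookkeeping rather than conceptual: one must be careful that $D$ need not be Noetherian or of finite type over $K$, so arguments phrased over $D$ directly (Krull dimension, Noetherianity of the coordinate ring) are not available. The safe route is the one sketched above — tensor up to $L=\operatorname{Frac}(D)$, where $L[Z_{-1},\dots,Z_{-m-n+2}]$ is Noetherian, use that a finite-type $L$-scheme which is unramified over $L$ (equivalently, étale, since it is also flat over a field) is a disjoint union of spectra of finite separable field extensions of $L$ and in particular has finitely many points, and then note that distinct $D$-solutions map to distinct $L$-points because $D\hookrightarrow L$. An alternative, more elementary finish avoiding scheme language: if there were infinitely many solutions $\mathfrak{v}^{(1)},\mathfrak{v}^{(2)},\dots$, one could work in the Noetherian ring $L[Z_{-1},\dots,Z_{-m-n+2}]$, where the radical ideal of the (now at most countable, but that is irrelevant) solution set together with $\det(J)$ being a nonzerodivisor modulo it forces the quotient by $(E_1,\dots,E_{m+n-2})$ localized at $\det(J)$ to be a product of fields, contradicting infinitude; but the scheme-theoretic phrasing is shorter and I would use it.
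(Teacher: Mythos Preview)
Your proposal is correct and follows essentially the same approach as the paper: pass from $D$ to a field, use Theorem~\ref{Jinvertible} to see that the Jacobian matrix is invertible at every solution, and conclude via the Jacobian criterion that the solution locus is zero-dimensional, hence finite. The paper's proof is three lines --- it passes to the algebraic closure of $\operatorname{Frac}(D)$ rather than $\operatorname{Frac}(D)$ itself and invokes the classical Jacobian criterion directly, whereas you rephrase the same criterion in scheme-theoretic language (unramified, \'etale, quasi-finite); the content is identical.
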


\begin{proof} Let $L$ be an algebraic closure of the field of fractions of $D$. By the Jacobian Criterion,
applying Theorem~\ref{Jinvertible} with $D$ replaced by $L$, we obtain that the set of solutions
$$
(C_{-1},\dots,C_{-m-n-+2})\in L^{m+n-2}
$$
of $S(n,m,(\lambda_i),F_{1-n})$ is a zero-dimensional algebraic variety, and hence finite.
\end{proof}

\begin{remark} If $F_{1-n}=y$ and $(P,Q)$ is a counterexample to JC, then the hypothesis of
Theorem~\ref{Jinvertible} are fulfilled with $A:=\frac{\partial Q}{\partial y}$ and
$B:=-\frac{\partial P}{\partial y}$.
\end{remark}

\section[The homogeneous system $S(n,m,F_{1-n})$]{The homogeneous system $\bm{S(n,m,F_{1-n})}$}
\label{homogeneos}
\setcounter{equation}{0}
In this section we let $S(n,m,F_{1-n})$ denote the system $S(n,m,(\lambda_i),F_{1-n})$ with
$\lambda_i = 0$ for all $i\ne 0$, and we begin the study of the solution set of this system.
Consider the polynomials
$$
E^{(h)}_1,\dots,E^{(h)}_{m+n-2}\in K[Y][Z_{-1},\dots,Z_{-m-n+2}],
$$
defined by
\begin{equation}\label{Ehomogeneo}
E^{(h)}_i := \begin{cases} (\ov{Z}^n)_{-i} &\text{for $1\le i < m$,}\\[5pt] (\ov{Z}^m)_{m-i-1} &
\text{for $m\le i< m+n-2$,} \\[5pt] (\ov{Z}^m)_{1-n}+Y^{m+n-1} &\text{for $i=m+n-2$,}\end{cases}
\end{equation}
where
$$
\ov{Z} := x + Z_{-1} x^{-1} + \cdots + Z_{-m-n+2}x^{-m-n+2} \in K[Y][Z_{-1},\dots,Z_{-m-n+2}]((x^{-1})).
$$
Let $I^{(h)}$ be the ideal of $K[Y,Y^{-1}][Z_{-1},\dots,Z_{-m-n+2}]$ generated by the $E_i^{(h)}$'s.
Consider the weight $w$ on the variables given by $w(Z_{-k}):=k+1$ and $w(Y):=1$ and let $\wdeg$
denote the corresponding degree. Similar computations as in Remark~\ref{acotacion de grado} show
that each $E_i^{(h)}$ is $w$-homogeneous with
$$
\wdeg(E_i^{(h)})=\begin{cases}i+n &\text{if $i<m$,}\\ i+1 &\text{if $i\ge m$.}\end{cases}
$$
Propositions~\ref{proposicion} and~\ref{zannier} show that the system $S(n,m,Y^{m+n-1})$ has always a solution.
In the present section we do not need this result.

\begin{lemma}\label{polinomiounivariado} If there exists a solution
$$
C = x + C_{-1} x^{-1} + C_{-2} x^{-2} + C_{-3} x^{-3}+\cdots \in K[Y]((x^{-1}))
$$
of the system $S(n,m,Y^{m+n-1})$, then for $k\!=\!1,\dots,m\!+\!n\!-\!2$ there exist
$s_k\in \mathds{N}$ and a $w$-ho\-mo\-ge\-neous polynomial $h_k\in K[Y,Z_{-k}]\cap I^{(h)}$
with leading term $(Z_{-k})^{s_k}$, with respect to the graduation obtained giving weight
$1$ to $Z_{-k}$ and $0$ to $Y$.
\end{lemma}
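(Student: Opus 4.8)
The plan is to place the homogeneous system, over the ring $D:=K[Y,Y^{-1}]$, inside the scope of Theorem~\ref{Jinvertible}, deduce that it has only finitely many solutions, and then run a weight‑compatible elimination argument.

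First I would unwind the hypothesis. Let $C=x+\sum_{k\ge1}C_{-k}x^{-k}\in K[Y]((x^{-1}))$ be the assumed solution of $S(n,m,Y^{m+n-1})$. Since $K[Y]\subseteq D$, it is also a solution over the integral domain $D$, with associated pair $(P,Q)$ given by $P=C^n$ and $Q=C^m+F$, where $F=Y^{m+n-1}x^{1-n}+F_{-n}x^{-n}+\cdots\in D[[x^{-1}]]$. The crucial computation is that $[P,Q]=n(m+n-1)Y^{m+n-2}$, which is a unit in $D$. Indeed $[P,Q]=[C^n,C^m]+[C^n,F]=[C^n,F]=[P,F]\in D[x]$ (because $[C^n,C^m]=0$), and taking $(1,0)$‑leading parts, the identity $[P,F]-[P-P_+,F]-[P_+,F-F_+]=[P_+,F_+]$ together with the bracket degree inequality for $\mathbf{w}=(1,0)$ gives $\deg_x([P-P_+,F])<0$ and $\deg_x([P_+,F-F_+])<0$, while $[P_+,F_+]=[x^n,Y^{m+n-1}x^{1-n}]=n(m+n-1)Y^{m+n-2}$; since $[P,Q]$ is a polynomial, the remaining part of negative $x$‑degree vanishes. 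Consequently $A:=[P,Q]^{-1}\partial_Y Q$ and $B:=-[P,Q]^{-1}\partial_Y P$ lie in $D[x]$ and satisfy $A\partial_xP+B\partial_xQ=1$, so Theorem~\ref{Jinvertible} applies over $D$. The same computation applies to the pair associated with any solution of $S(n,m,Y^{m+n-1})$ over the field $L:=\overline{K(Y)}$, and there $n(m+n-1)Y^{m+n-2}\in L^{\times}$ as well; hence, exactly as in the proof of Corollary~\ref{Jinvertiblecoro} (applying Theorem~\ref{Jinvertible} over $L$ at every $L$‑point of the system), the system has finitely many solutions over $L$, i.e.\ with $\mathbf{Z}:=(Z_{-1},\dots,Z_{-m-n+2})$ the algebra $L[\mathbf Z]/I^{(h)}L[\mathbf Z]$ is finite‑dimensional over $L$.

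Next I would eliminate. Since $L[\mathbf Z]/I^{(h)}L[\mathbf Z]\cong(K(Y)[\mathbf Z]/I^{(h)}K(Y)[\mathbf Z])\otimes_{K(Y)}L$ and a field extension preserves dimension, $K(Y)[\mathbf Z]/I^{(h)}K(Y)[\mathbf Z]$ is finite‑dimensional over $K(Y)$ and nonzero (the solution $C$ provides a $K(Y)$‑point of the variety); in particular the class of $Z_{-k}$ satisfies a nonzero polynomial $g\in K(Y)[T]$ with $\deg_T g\ge1$ and $g(Z_{-k})\in I^{(h)}K(Y)[\mathbf Z]$. Multiplying $g$ and the coefficients of a representation $g(Z_{-k})=\sum_iq_iE^{(h)}_i$ by a common denominator $a(Y)\in K[Y]\setminus\{0\}$ produces a nonzero $p:=a(Y)\,g(Y,Z_{-k})\in K[Y,Z_{-k}]\cap I^{(h)}$ with $\deg_{Z_{-k}}p\ge1$. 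Now I would pass to $w$‑homogeneous pieces: because $I^{(h)}$ is generated by $w$‑homogeneous elements it is a $w$‑homogeneous ideal, so every $w$‑homogeneous component of $p$ lies in $I^{(h)}$, and since $p$ involves $Z_{-k}$ to positive degree at least one component $h$, say of $w$‑degree $d$, still does. By $w$‑homogeneity $h=\sum_c\beta_cY^{d-c(k+1)}Z_{-k}^c$ with $\beta_c\in K$ and $d-c(k+1)\ge0$ whenever $\beta_c\ne0$; let $s_k\ge1$ be the largest $c$ with $\beta_c\ne0$. Then
$$
h_k:=\beta_{s_k}^{-1}Y^{-(d-s_k(k+1))}h=\beta_{s_k}^{-1}\sum_c\beta_cY^{(s_k-c)(k+1)}Z_{-k}^c
$$
lies in $I^{(h)}$ (as $Y^{-1}$ is a unit in $K[Y,Y^{-1}][\mathbf Z]$), lies in $K[Y,Z_{-k}]$, is $w$‑homogeneous of $w$‑degree $s_k(k+1)$, and has leading term $(Z_{-k})^{s_k}$ with respect to the grading giving weight $1$ to $Z_{-k}$ and $0$ to $Y$; this is the desired $h_k$.

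The hard part is the observation in the first step: a priori the homogeneous system does not come from a Jacobian pair, so Theorem~\ref{Jinvertible} is not obviously available. The point is that the bracket $[P,Q]$ is rigidly forced to equal the single monomial $n(m+n-1)Y^{m+n-2}$, which becomes invertible precisely after inverting $Y$ — this is why one must work over $D=K[Y,Y^{-1}]$ rather than $K[Y]$, and it is also what makes the normalization $Y^{-(d-s_k(k+1))}$ in the last step legitimate. The remaining ingredients (zero‑dimensionality via the Jacobian criterion, the flat base‑change comparison of quotient dimensions, clearing denominators, and isolating a $w$‑homogeneous component) are routine.
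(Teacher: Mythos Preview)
Your proof is correct and follows the same overall strategy as the paper's: compute $[P,Q]=n(m+n-1)Y^{m+n-2}$, apply Theorem~\ref{Jinvertible} over $\overline{K(Y)}$ to obtain zero-dimensionality, produce a univariate relation for $Z_{-k}$ over $K(Y)$, and then isolate a $w$-homogeneous piece lying in $I^{(h)}$. The only differences are cosmetic streamlinings: where the paper builds the univariate polynomial explicitly as $\prod_j(Z_{-k}-a_j)$ and then descends from a finite extension $K_1/K(Y)$ via a chosen basis, you invoke finite-dimensionality of $K(Y)[\mathbf Z]/I^{(h)}K(Y)[\mathbf Z]$ directly; and where the paper embeds $K(Y)\hookrightarrow K((Y^{-1}))$ to make sense of $w$-homogeneous components, you clear denominators first so that homogeneity of $I^{(h)}$ in the Laurent polynomial ring $K[Y,Y^{-1}][\mathbf Z]$ suffices, with the final $Y^{-(d-s_k(k+1))}$ normalization replacing the paper's choice of the specific degree $rt(k+1)$.
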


\begin{proof} Throughout this proof we write $Z_{-k}^u$ instead of $(Z_{-k})^u$, and we let
$[R,S]$ denote the Jacobian $J_{x,Y}(R,S)$ with respect to the variables $x$ and $Y$. Let $P$
and $Q$ be the polynomials associated with $C$ and let $F$ be as in
Definition~\ref{solucion de sistema}. Let $P_+$ and $F_+$ be the leading terms of $P$ and $F$
with respect to $\deg_x$. Since, by~\eqref{forma de F} and~\eqref{forma de P y de Q},
\begin{equation}
[P_+,F_+] = [x^n,x^{1-n}Y^{m+n-1}] = n(m+n-1)Y^{m+n-2},\label{eqqq6}
\end{equation}
we have
\begin{equation}
\begin{aligned}
\deg_x([P-P_+,F])&\le \deg_x(P-P_+) + \deg_x(F) - 1\\
& < \deg_x(P) + \deg_x(F) - 1\\
& = n + (1-n) -1\\
& = \deg_x([P_+,F_+]),
\end{aligned}\label{eqqq7}
\end{equation}
and similarly,
\begin{equation}
\deg_x([P_+,F-F_+]) < \deg_x([P_+,F_+]).\label{eqqq8}
\end{equation}
Using~\eqref{eqqq6}, \eqref{eqqq7}, \eqref{eqqq8} and that
$$
[P_+,F_+] + [P-P_+,F] + [P_+,F-F_+] = [P,F],
$$
we obtain
\begin{equation}
[P,Q] = [P,F] = [P_+,F_+] = [x^n,x^{1-n}Y^{m+n-1}] = n(m+n-1)Y^{m+n-2},\label{eqqq9}
\end{equation}
where the first equality follows from~\eqref{forma de P y de Q}. Let $D$ be an algebraic closure of $K(Y)$.
By~\eqref{eqqq9}, if we set
$$
A := \frac{1}{n(m+n-1)Y^{m+n-2}}\,\frac{\partial Q}{\partial Y}\quad \text{and}\quad
B:=\frac{-1}{n(m+n-1) Y^{m+n-2}}\,\frac{\partial P}{\partial Y},
$$
then
$$
A P' + B Q' = 1.
$$
By theorem~\ref{Jinvertible} the set of all the solutions of the system of equations
$S_t(n,m,Y^{m+n-1})$, introduced in Definition~\ref{sistema de ecuaciones asociado con},
is finite. For each $k\in \{1,\dots,m+n-2\}$, let
$$
f:=\prod_{j=1}^r (Z_{-k}-a_j)\in D[Z_{-k}]\subseteq D[Z_{-1},\dots,Z_{-m-n+2}],
$$
where $\{a_1,\dots,a_r\}\subseteq D$ is the set formed by the $k$th coordinates of the
solutions in $D^{m+n-2}$ of the system of equations mentioned above. Let $\ov{I}^{(h)}$
be the extension of $I^{(h)}$ in $D[Z_{-1},\dots,Z_{-m-n+2}]$. By the nullstellensatz
$\cramped{f\in \sqrt{\ov I^{(h)}}}$, and so, there is $t\in \mathds{N}$ such that
$f^t\in \ov I^{(h)}$. This means that
\begin{equation}\label{una expresion para f^t}
f^t = \sum_i \hat{f}_i E^{(h)}_i,\qquad\text{for some $\hat{f}_i\in D[Z_{-1},\dots,Z_{-m-n+2}]$.}
\end{equation}
Let $K_1$ be the finite extension of $K(Y)$ generated by $a_1,\dots,a_r$. By the
definition of $f$ there exist $b_0,\dots,b_{rt-1}\in K_1$ such that
\begin{equation}\label{otra expresion para f^t}
f^t =  Z_{-k}^{rt}+b_{rt-1}Z_{-k}^{rt-1}+\dots+b_1 Z_{-k}+ b_0.
\end{equation}
Let $e_0,\dots,e_T$ be a basis of $K_1$ over $K(Y)$ with $e_0=1$. Write
$$
f^t = \sum_{l=0}^T h^{(l)} e_l ,\qquad \hat{f}_i = \sum_{l=0}^T f_i^{(l)} e_l\qquad
\text{and}\qquad b_j = \sum_{l=0}^T b_j^{(l)} e_l,
$$
where
$$
h^{(l)}\in K(Y)[Z_{-k}],\qquad f_i^{(l)}\in K(Y)[Z_{-1},\dots,Z_{-m-n+2}]\qquad\text{and}
\qquad b_j^{(l)}\in K(Y).
$$
Using~\eqref{una expresion para f^t},~\eqref{otra expresion para f^t}, that $\hat{f}_i =
\sum_{l=0}^T f_i^{(l)}e_l$ and that $e_0=1$, we obtain
\begin{equation}\label{calculo de sum f_i^{(0)} E^{(h)}_i} \sum_i f_i^{(0)} E^{(h)}_i =
Z_{-k}^{rt}+b_{rt-1}^{(0)}Z_{-k}^{rt-1}+\dots+b_1^{(0)} Z_{-k}+ b_0^{(0)}.
\end{equation}
Consider the canonical inclusion of $K(Y)$ into $K((Y^{-1}))$ and write
$$
b_i^{(0)} = \sum_{j\in \mathds{Z}} \lambda_{ij} Y^j\qquad\text{and}\qquad f_i^{(0)} = \sum_{j\in
\mathds{Z},\,\mathbf{l}\in \mathds{N}_0^{m+n-2}} \gamma_{j,\mathbf{l}} Y^j \mathbf{Z}^{\mathbf{l}},
$$
where
$$
\mathbf{Z}^{\mathbf{l}} := Z_{-1}^{l_1}\cdots Z_{-m-n+2}^{l_{m+n-2}}\qquad\text{if $\mathbf{l} =
(l_1,\dots,l_{m+n-2})$}.
$$
Set
$$
f_i := \begin{cases} \displaystyle{\sum_{(j,\mathbf{l})\in \mathcal{B}_{i+n}} \gamma_{j,\mathbf{l}} Y^j
\mathbf{Z}^{\mathbf{l}}} & \text{if $i<m$,}\\[13pt] \displaystyle{\sum_{(j,\mathbf{l})\in \mathcal{B}_{i+1}}
\gamma_{j,\mathbf{l}} Y^j \mathbf{Z}^{\mathbf{l}}} & \text{if $i\ge m$,}
\end{cases}
$$
where $\mathcal{B}_u:=\{(j,\mathbf{l}): \wdeg(Y^j \mathbf{Z}^{\mathbf{l}}) = rt(k+1)-u\}$. Note that $f_i$ is
the $w$-ho\-mo\-geneous component of $f_i^{(0)}$ satisfying
$$
\wdeg(f_i) + \wdeg\bigl(E_i^{(h)}\bigr)= rt(k+1) = \wdeg\bigl(Z_{-k}^{rt}\bigr)
$$
Taking the $w$-homogeneous component of degree $rt(k+1)$ in
equality~\eqref{calculo de sum f_i^{(0)} E^{(h)}_i}, we obtain
$$
\sum_i f_i E^{(h)}_i = Z_{-k}^{rt} + \sum_{j=1}^{rt} \lambda_{rt-j,jk+j} Y^{jk+j}  Z_{-k}^{rt-j}.
$$
and so $s_k:=rt$ and $h_k:=\sum_i f_i E^{(h)}_i$ satisfy the required conditions.
\end{proof}

\begin{theorem}\label{homogeneo} Assume that $C\in K[Y]((x^{-1}))$ is a solution of $S(n,m,Y^{m+n-1})$.
Then, for each $k=1,\dots,m+n-2$ there exists $c_{-k}\in K$ such that
$$
C_{-k}=c_{-k} Y^{k+1}.
$$
\end{theorem}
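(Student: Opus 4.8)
The plan is to feed the solution $C$ into Lemma~\ref{polinomiounivariado} and then exploit that the ground field $K$ is relatively algebraically closed in the rational function field $K(Y)$.

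Fix $k\in\{1,\dots,m+n-2\}$. By Lemma~\ref{polinomiounivariado} there are $s_k\in\mathds{N}$ and a $w$-homogeneous polynomial $h_k\in K[Y,Z_{-k}]\cap I^{(h)}$ whose leading term in the $Z_{-k}$-degree is $Z_{-k}^{s_k}$. The first step is to read off the exact shape of $h_k$ from its $w$-homogeneity: since $h_k$ involves only $Y$ and $Z_{-k}$, with $w(Y)=1$ and $w(Z_{-k})=k+1$, and $\wdeg(h_k)=\wdeg(Z_{-k}^{s_k})=s_k(k+1)$, every monomial $Y^aZ_{-k}^b$ occurring in $h_k$ must satisfy $a=(s_k-b)(k+1)$; hence
$$
h_k = Z_{-k}^{s_k}+\sum_{j=1}^{s_k}c_j\,Y^{j(k+1)}Z_{-k}^{s_k-j}\qquad\text{for some }c_1,\dots,c_{s_k}\in K.
$$
The second step is to note that, since $C$ is a solution of $S(n,m,Y^{m+n-1})$, the tuple $(C_{-1},\dots,C_{-m-n+2})$ satisfies the standard system $S_t(n,m,Y^{m+n-1})$ (Remark~\ref{re 1.21}), i.e.\ it is a common zero of $E^{(h)}_1,\dots,E^{(h)}_{m+n-2}$, hence of every element of the ideal $I^{(h)}$ they generate; in particular $h_k(Y,C_{-k})=0$ in $K[Y]$.

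The third step converts this into the statement. Since $Y^{k+1}\neq0$, the element $\beta:=C_{-k}/Y^{k+1}$ lies in $K(Y)$, and dividing the identity $h_k(Y,C_{-k})=0$ by $Y^{s_k(k+1)}$ yields
$$
\beta^{s_k}+\sum_{j=1}^{s_k}c_j\,\beta^{s_k-j}=0 .
$$
Thus $\beta$ is a root of a monic polynomial over $K$, so $\beta$ is algebraic over $K$; as $K$ is algebraically closed inside $K(Y)$ we conclude $\beta\in K$, and then $c_{-k}:=\beta$ satisfies $C_{-k}=c_{-k}Y^{k+1}$. If one prefers to avoid invoking relative algebraic closedness, one can argue directly: writing $\beta=p/q$ in lowest terms with $q$ monic, the displayed relation cleared of denominators shows $q\mid p^{s_k}$, so $q=1$ and $\beta\in K[Y]$, and then comparing $Y$-degrees in $\beta^{s_k}+\sum_j c_j\beta^{s_k-j}=0$ forces $\deg_Y\beta=0$.

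I do not expect any real obstacle here: the substantive work---the finiteness of the solution set coming from Theorem~\ref{Jinvertible}, the Nullstellensatz argument, and the elimination down to the single variable $Z_{-k}$---has already been carried out inside Lemma~\ref{polinomiounivariado}. The only points requiring minor care are the $w$-grading bookkeeping that produces the explicit form of $h_k$ and the (standard) fact that $K$ is relatively algebraically closed in $K(Y)$.
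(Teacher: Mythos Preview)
Your proposal is correct and follows essentially the same approach as the paper: both invoke Lemma~\ref{polinomiounivariado} to obtain the $w$-homogeneous bivariate polynomial $h_k$ and use $h_k(Y,C_{-k})=0$. The paper finishes by comparing the top and bottom $Y$-degrees of $h_k(C_{-k})$ directly to force $t=u=k+1$, while you substitute $\beta=C_{-k}/Y^{k+1}$ and use that $K$ is relatively algebraically closed in $K(Y)$; your alternative direct argument at the end is exactly the paper's degree comparison in disguise.
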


\begin{proof} Let $h_k(Z_{-k})\in K[Y][Z_{-k}]$ and $s_k$ be as in the previous lemma. Since $h_k$ is
$w$-homogeneous,
$$
h_k(Z_{-k}) = \sum_{i=r}^{s_k} \mu_i Y^{(s_k-i)(k+1)} Z_{-k}^i \qquad\text{with $\mu_r\ne 0$ and
$\mu_{s_k}=1$.}
$$
Since $h_k\in I^{(h)}$, we know that $h_k(C_{-k})=0$. Suppose $C_{-k}\ne 0$ and write
$$
C_{-k} = \sum_{j=t}^u \nu_j Y^j\qquad \text{with $\nu_t,\nu_u\in K^{\times}$}.
$$
In order to finish the proof we must check that $u = t= k+1$. But if $k+1<u$, then
$$
h_k(C_{-k})=\mu_{s_k} \nu_u^{s_k} Y^{u s_k}+\text{lower order terms},
$$
and consequently $h_k(C_{-k})\ne 0$, a contradiction. Similarly, if $t<k+1$, then
$$
h_k(C_{-k})= \mu_{s_k} \nu_t^{s_k} Y^{t s_k}+\text{higher order terms},
$$
and consequently again $h_k(C_{-k})\ne 0$.
\end{proof}

By Remark~\ref{re 1.21} from the solutions of $S_t(n,m,1)$ we obtain solutions of $S(n,m,1)$. In the next section
we will see that solutions of $S(n,m,1)$ determine solutions of $S(n,m,Y^{n+m-1})$.

\section[Presentations of the solutions of $S(n,m,Y^{m+n-1})$]{Presentations of the solutions of
$\bm{S(n,m,Y^{m+n-1})}$}

In this section we focus on solutions of the system $S(n,m,Y^{m+n-1})$. This system has many different
presentations. Note that if $(P,Q)$ is the pair associated with a solution of $S(n,m,Y^{m+n-1})$, then by
Theorem~\ref{homogeneo}  and Remark~\ref{acotacion de grado},
$$
P=C^n=\sum_{i=0}^n p_i x^i Y^{n-i}\qquad\text{and}\qquad Q=\Pi_{+}(C^m)=\sum_{i=0}^m q_i x^i Y^{m-i}
$$
are homogeneous polynomials, with $p_n = q_m =1$ and $p_{n-1} = q_{m-1} = 0$. Furthermore,
by~\eqref{eqqq9} we know that
$$
[P,Q] = n(m+n-1)Y^{m+n-2}.
$$

\begin{proposition}\label{proposicion} Let
$$
P=\sum_{i=0}^n p_i x^i Y^{n-i}\qquad\text{and}\qquad Q=\sum_{i=0}^m q_i x^i Y^{m-i}
$$
be homogeneous polynomials with $p_n = q_m = 1$ and $p_{n-1} = 0$. Define $p,q\in K[x]$~by
$$
p:=\sum_{i=0}^n p_i x^i\qquad\text{and}\qquad q:=\sum_{i=0}^m q_i x^i.
$$
Let $\lambda\in K^{\times}$ and set $\wt{\lambda}:=n\lambda(1-m-n)$. The following conditions are equivalent:
\begin{enumerate}

\smallskip

\item $(P,Q)$ is the pair associated with a solution
$$
C := x + C_{-1} x^{-1} + C_{-2} x^{-2} +\cdots \in K[Y]((x^{-1}))
$$
of the system $S(n,m,\lambda Y^{m+n-1})$.

\smallskip

\item $(p,q)$ is the pair associated with a solution
$$
c := x + c_{-1} x^{-1} + c_{-2} x^{-2} +\cdots \in K((x^{-1}))
$$
of the system $S(n,m,\lambda)$.

\smallskip

\item $[P,Q]=\wt{\lambda} Y^{m+n-2}$.

\smallskip

\item The polynomials $p(x)$ and $q(x)$ fulfill
\begin{equation}\label{equacionprincipal}
mp'q-npq'=\wt{\lambda}.
\end{equation}

\smallskip

\item The polynomials $p(x)$ and $q(x)$ fulfill
\begin{equation}\label{equacionpotencia}
p^m-q^n = n\lambda x^{mn-m-n+1}+\text{lower order terms,}
\end{equation}

\smallskip

\item Write
$$
p(x)= \prod_{i=1}^n (x-\alpha_i)\quad \text{and}\quad q(x)= \prod_{j=1}^m (x-\beta_j).
$$
The polynomial $g:=pq\in K[x]$ is separable and fulfills
\begin{equation}\label{equacionsymmetrica}
m g'(\alpha_i)=\wt{\lambda}\quad\text{and}\quad n g'(\beta_i)= -\wt{\lambda}.
\end{equation}

\smallskip

\end{enumerate}
\end{proposition}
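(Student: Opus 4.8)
\emph{The plan is to dehomogenize.} Put $p:=P(x,1)$ and $q:=Q(x,1)$, so that $P(x,Y)=Y^np(x/Y)$ and $Q(x,Y)=Y^mq(x/Y)$ are the homogenizations of $p$ and $q$ to degrees $n$ and $m$; these are precisely the polynomials $p,q$ of the statement. Since $n\nmid m$ and $m\nmid n$ force $m,n\ge 2$, one has $1-m-n\ne 0$ and hence $\tilde\lambda\ne 0$. I would establish $(3)\Leftrightarrow(4)\Leftrightarrow(5)\Leftrightarrow(6)$ by elementary polynomial algebra, then $(2)\Leftrightarrow(5)$ by passing to the $n$-th root $c=p^{1/n}$, and finally $(1)\Leftrightarrow(2)$ via the homogenization correspondence, invoking Theorem~\ref{homogeneo} and Remark~\ref{re 1.21}.

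For the algebraic core, a direct computation (substitute $u=x/Y$, or use Euler's relation) gives the identity $[P,Q]=Y^{m+n-2}\,(mp'q-npq')(x/Y)$; moreover $\deg(mp'q-npq')\le m+n-2$ because the leading terms cancel ($mn-nm=0$). This yields $(3)\Leftrightarrow(4)$ and fixes the value $\tilde\lambda$. For $(4)\Leftrightarrow(5)$, set $h:=p^m-q^n$ and $r:=mp'q-npq'$ and note the identity $h'q-nhq'=p^{m-1}r$; since $p^m,q^n$ are monic of degree $mn$ one has $\deg h\le mn-1$, so for $h\ne 0$ the left-hand side has degree exactly $\deg h+m-1$ with leading coefficient $(\deg h-mn)\,\mathrm{lc}(h)\ne 0$. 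If $r=\tilde\lambda$ is a nonzero constant, then $p^{m-1}r$ has degree $mn-n$ and leading coefficient $\tilde\lambda$, which forces $h\ne 0$, $\deg h=mn-m-n+1$, and $\mathrm{lc}(h)=\tilde\lambda/(1-m-n)$ — exactly the coefficient in $(5)$; conversely, if $h$ has the form $(5)$, the same comparison forces $p^{m-1}r$ to have degree $mn-n=\deg p^{m-1}$, so $r$ is a constant, identified with $\tilde\lambda$ on comparing leading coefficients. For $(4)\Leftrightarrow(6)$, differentiate $g=pq$: at a root $\alpha$ of $p$ one has $r(\alpha)=m\,p'(\alpha)q(\alpha)=m\,g'(\alpha)$, and at a root $\beta$ of $q$, $r(\beta)=-n\,g'(\beta)$; thus $(6)$ amounts to $r-\tilde\lambda$ (of degree $\le m+n-2$) vanishing at the $m+n$ distinct roots of $g$, i.e. to $r=\tilde\lambda$, together with separability, which in the presence of $r\equiv\tilde\lambda\ne 0$ is automatic since $r(\alpha)\ne 0$ then forces $p'(\alpha)\ne 0$ and $q(\alpha)\ne 0$ at each root $\alpha$ of $p$, and symmetrically for $q$.

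For $(2)\Leftrightarrow(5)$: since $p=x^n+0\cdot x^{n-1}+\cdots$ there is a unique $c=x+c_{-1}x^{-1}+\cdots\in K((x^{-1}))$ with $c^n=p$. By Definition~\ref{solucion de sistema}, $(p,q)$ is the pair associated with a solution of $S(n,m,\lambda)$ precisely when, for this $c$, $q=\Pi_+(c^m)$ and $c^m-q=\Pi_-(c^m)$ equals $\kappa x^{1-n}+O(x^{-n})$, where $\kappa=\pm\lambda$ according to the sign in Definition~\ref{solucion de sistema} — a single identity $c^m=q+\kappa x^{1-n}+O(x^{-n})$. Raising it to the $n$-th power, the dominant correction is a scalar multiple of $q^{n-1}x^{1-n}$, of degree $mn-m-n+1$ since $q^{n-1}$ is monic of degree $m(n-1)$, and this is exactly $(5)$; conversely, from $(5)$ factor $p^m-q^n=(c^m-q)\sum_{j=0}^{n-1}c^{m(n-1-j)}q^j$, note the second factor is a sum of $n$ monic series of degree $m(n-1)$, hence of degree exactly $m(n-1)$, and deduce that $c^m-q$ has degree $1-n$ with the correct leading coefficient, so $q=\Pi_+(c^m)$ and the data defining $S(n,m,\lambda)$ are satisfied.

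Finally, for $(1)\Leftrightarrow(2)$ associate to $c=x+c_{-1}x^{-1}+\cdots$ its homogenization $\hat c:=Y\,c(x/Y)=x+c_{-1}Y^2x^{-1}+c_{-2}Y^3x^{-2}+\cdots$. A direct check shows that if $c$ solves $S(n,m,\lambda)$ with pair $(p,q)$, then $\hat c$ solves $S(n,m,\lambda Y^{m+n-1})$ with pair $(P,Q)$: indeed $\hat c^n=Y^np(x/Y)=P$, $\Pi_+(\hat c^m)=Q$, and the $x^{1-n}$-coefficient of $\hat c^m-Q$ is $\lambda Y^{m+n-1}$. This is $(2)\Rightarrow(1)$. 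Conversely, let $C$ solve $S(n,m,\lambda Y^{m+n-1})$ with pair $(P,Q)$; Theorem~\ref{homogeneo} (its proof goes through verbatim with $\lambda Y^{m+n-1}$ in place of $Y^{m+n-1}$) provides $c_{-k}\in K$ with $C_{-k}=c_{-k}Y^{k+1}$ for $1\le k\le m+n-2$. Substituting $Z_{-k}\mapsto c_{-k}Y^{k+1}$ into the $w$-homogeneous equations of $S_t(n,m,\lambda Y^{m+n-1})$ produces, up to nonzero powers of $Y$, the equations of $S_t(n,m,\lambda)$ at $(c_{-1},\dots,c_{-m-n+2})$, which therefore hold; by Remark~\ref{re 1.21} they extend to a solution $c$ of $S(n,m,\lambda)$, and $\hat c$ then solves $S(n,m,\lambda Y^{m+n-1})$ and agrees with $C$ in the coefficients of $x^{-1},\dots,x^{-m-n+2}$, so $C=\hat c$ by the uniqueness in Remark~\ref{re 1.21}. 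Hence $(p,q)=(P(x,1),Q(x,1))$ is the pair associated with $c$, which is $(2)$. I expect this last step to be the main obstacle — one must upgrade the partial information of Theorem~\ref{homogeneo} to the full identity $C=\hat c$ using the weight-homogeneity of the system and the uniqueness of Remark~\ref{re 1.21}. The other steps are short computations; the only delicate point is the consistent bookkeeping of the constants, entirely controlled by the identity $[P,Q]=Y^{m+n-2}(mp'q-npq')(x/Y)$ and the definition of $\tilde\lambda$.
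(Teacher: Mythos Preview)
Your proposal is correct. Most of your steps parallel the paper's proof: $(4)\Leftrightarrow(6)$ and $(2)\Leftrightarrow(5)$ are essentially the same arguments, and your $(3)\Leftrightarrow(4)$ via the closed identity $[P,Q]=Y^{m+n-2}(mp'q-npq')(x/Y)$ is just a tidier packaging of the paper's coefficient-by-coefficient expansion.

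Two genuine differences deserve comment. First, for $(4)\Leftrightarrow(5)$ you use the single identity $h'q-nhq'=p^{m-1}r$ with $h=p^m-q^n$ and $r=mp'q-npq'$, then compare degrees and leading coefficients in both directions. The paper instead proves the two implications by separate devices: differentiating $p^m-q^n=n\lambda x^{j+1}+t$ for $(5)\Rightarrow(4)$, and computing $(p^m/q^n)'$ for $(4)\Rightarrow(5)$. Your route is shorter and more symmetric.

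Second, for $(1)\Rightarrow(2)$ you invoke Theorem~\ref{homogeneo} (extended to $\lambda Y^{m+n-1}$), pull back through the $w$-homogeneity of $S_t$, and then appeal to the uniqueness in Remark~\ref{re 1.21} to identify $C$ with $\hat c$. This is correct but much heavier than needed: the paper simply applies the evaluation homomorphism $K[Y]\to K$ at $Y=1$, which sends $C$, $P$, $Q$, $F$ to $c$, $p$, $q$, $f$ and the identity $Q=C^m+F$ with $F_{1-n}=\lambda Y^{m+n-1}$ to $q=c^m+f$ with $f_{1-n}=\lambda$. No structural input beyond Definition~\ref{solucion de sistema} is required. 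Your detour through Theorem~\ref{homogeneo} (which itself rests on Theorem~\ref{Jinvertible}) works, but the paper's one-line specialization is what you should reach for here.
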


\begin{proof} $(1)\Leftrightarrow (2)$.\enspace This follows directly using the evaluation
map at $Y = 1$ in one direction and taking $C_{-k}:=c_{-k}Y^{k+1}$ in the other direction.

\smallskip

\noindent $(2)\Rightarrow (5)$.\enspace We know that $p = c^n$ and there exists
$$
f = \lambda x^{1-n} + f_{-n}x^{-n}+ f_{-n-1}x^{-n-1}+\cdots\in K((x^{-1}))
$$
such that $c^m = q+f$. Hence
$$
p^m  = c^{mn} = (q+f)^n = q^n + nq^{n-1}f + \binom{n}{2} q^{n-2}f^2 +\cdots
$$
Since
$$
\deg(q^{n-k}f^k)=m(n-k)+k(1-n)\qquad\text{and}\qquad q^{n-1}f=\lambda x^{mn-m-n+1} + \text{lower order terms.}
$$
this implies item~(5).

\smallskip

\noindent $(5)\Rightarrow (2)$.\enspace An standard computation shows that there exists a unique
$$
c = x + c_0 + c_{-1}x^{-1}+ c_{-2}x^{-2} + \cdots \in K((x^{-1})),
$$
such that $c^n = p$. Write $f:=c^m-q$. Since the leading terms of $q$ and $c^m$ coincide,
$\deg(f)<m$. Furthermore
$$
c^{nm}= q^n+nfq^{n-1} + r,
$$
where $r\in K[x]$ has degree lower than $\deg(fq^{n-1})$. On the other hand, by hypothesis,
$$
c^{nm}-q^n = p^m-q^n = n\lambda x^{mn-m-n+1} + \text{lower order terms},
$$
and so,
$$
nfq^{n-1} +r = n\lambda x^{mn-m-n+1} + \text{lower order terms}.
$$
Since $q$ is monic of degree $m$, this implies that $\deg(f) = 1-n$ and the principal
coefficient $f_{1-n}$ of $f$ is $\lambda$.

\smallskip

\noindent $(5)\Rightarrow (4)$.\enspace Set $j:=mn-m-n$ and write
\begin{equation}
t:= p^m - q^n - n\lambda x^{j+1}.\label{eq22}
\end{equation}
By hypothesis $\deg(t)\le j$. Computing the derivative in~\eqref{eq22}, we obtain
$$
mp^{m-1}p' = nq^{n-1}q'+(j+1)n\lambda x^j + t'.
$$
Multiplying this equality by $q$, and dividing the result by $p^{m-1}$, we get
$$
mqp'=nq'\frac{q^n}{p^{m-1}}+(j+1)n\lambda x^j \frac{q}{p^{m-1}}+t'\frac{q}{p^{m-1}}.
$$
But, by~\eqref{eq22}
$$
\frac{q^{n}}{p^{m-1}}=p-\frac{n\lambda x^{j+1}}{p^{m-1}}-\frac{t}{p^{m-1}},
$$
and so
$$
mqp'=npq'-\frac{n^2\lambda x^{j+1}q'}{p^{m-1}} + (j+1)n\lambda x^j\frac{q}{p^{m-1}} -
\frac{ntq'}{p^{m-1}} + t'\frac{q}{p^{m-1}}.
$$
Since $p$ and $q$ are polynomials,
\begin{align*}
&\deg\left(\frac{n^2\lambda x^{j+1}q'}{p^{m-1}} \right) = 0
\text{ and its principal coefficient is $n^2m\lambda$,}\\
&\deg\left((j+1)n\lambda x^j\frac{q}{p^{m-1}}\right) = 0
\text{ and its principal coefficient is $(j+1)n\lambda$}
\shortintertext{and}
& \deg\left(\frac{ntq'}{p^{m-1}}\right),\,\deg\left(t'\frac{q}{p^{m-1}}\right) < 0,
\end{align*}
we conclude that
$$
mqp' = npq'+n\lambda(1-m-n),
$$
as desired.

\smallskip

\noindent $(4)\Rightarrow (5)$.\enspace By hypothesis
$$
\left(\frac{p^m}{q^n}\right)'=\frac{mp^{m-1}p'q^n-nq^{n-1}q'p^m}{q^{2n}}=
\frac{(mp'q-nq'p)q^{n-1}p^{m-1}}{q^{2n}}=\wt{\lambda}\frac{p^{m-1}}{q^{n+1}}.
$$
Since
$$
\deg\left(\wt{\lambda}\frac{p^{m-1}}{q^{n+1}}\right)=-m-n
\text{ and its principal coefficient is $\wt{\lambda}$,}
$$
there a exist $\kappa\in K$ and $r\in K((x^{-1}))$ such that $\deg(r) = 1-m-n$,
the principal coefficient of $r$ is $\wt{\lambda}/(1-m-n)$ and
$$
\frac{p^m}{q^n}=\kappa + r.
$$
Moreover, since $\deg(p^m) = \deg(q^n)$ and $p,q$ are monic, $\kappa = 1$. Hence,
$$
p^m=q^n+\frac{\wt{\lambda}}{1-n-m}\,x^{mn-m-n+1}+\text{terms of lower order,}
$$
as desired.

\smallskip

\noindent $(3)\Leftrightarrow (4)$.\enspace A direct computation shows that
\begin{align*}
[P,Q] & = P_x Q_Y-P_Y Q_x\\
& = \sum_{i=0}^n i p_i x^{i-1} Y^{n-i}\sum_{j=0}^m (m-j) q_j x^j Y^{m-j-1}\\
& - \sum_{i=0}^n (n-i) p_i x^i Y^{n-i-1}\sum_{j=0}^m j q_j x^{j-1} Y^{m-j}\\
& = \sum_{i,j} p_i q_j (i(m-j)-(n-i)j) x^{i+j-1} Y^{m+n-i-j-1}\\
&= \sum_{i,j} p_i q_j (mi-nj) x^{i+j-1} Y^{m+n-i-j-1}
\end{align*}
and
\begin{align*}
mp'q-npq' & = m \sum_{i=0}^n i p_i x^{i-1} \sum_{j=0}^m q_j x^j - n
\sum_{i=0}^n p_i x^i \sum_{j=0}^m j q_j x^{j-1}\\
& = \sum_{i,j} p_i q_j mi x^{i+j-1} - \sum_{i,j} p_i q_j nj x^{i+j-1}\\
&= \sum_{i,j} p_i q_j (mi-nj) x^{i+j-1}.
\end{align*}
So, it is clear that $[P,Q] = \wt{\lambda} Y^{m+n-2}$ if and only if $mp'q-npq'=\wt{\lambda}$.

\smallskip

\noindent $(4)\Rightarrow (6)$.\enspace A direct computation shows that
\begin{equation}
mg'-(m+n)pq'=mp'q-npq'=(m+n)qp'-ng'.\label{eq23}
\end{equation}
Using item~(4) and evaluating the first equality at the $\alpha_i$'s and the second
one at the $\beta_j$'s, we obtain~\eqref{equacionsymmetrica}. Since $\lambda\ne 0$,
this implies that $g$ has no multiple roots, and so $g$ is separable.

\smallskip

\noindent $(6)\Rightarrow (4)$.\enspace By equalities~\eqref{eq23} and the hypothesis, we have
$$
(mp'q-npq')(\alpha_i) = (mp'q-npq')(\beta_j) = \wt{\lambda}\quad\text{for all $i,j$.}
$$
Since $\deg(mp'q-npq') \le n+m-1$, this implies that $mp'q-npq' = \wt{\lambda}$.
\end{proof}

\begin{proposition}\label{proposicion1} Let $n,m>1$. If $n|m$ or $m|n$, then there is no
solution to \eqref{equacionprincipal}.
\end{proposition}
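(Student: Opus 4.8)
The plan is to reduce the claim to condition~(5) of Proposition~\ref{proposicion}. Suppose a pair $(p,q)$ as in that proposition satisfies \eqref{equacionprincipal}; note that $\wt{\lambda}\ne 0$, equivalently $\lambda\ne 0$, because $n,m>1$ forces $n(1-m-n)\ne 0$. Then, by the equivalence $(4)\Leftrightarrow(5)$ of Proposition~\ref{proposicion},
\[
p^m-q^n = n\lambda\, x^{mn-m-n+1}+\text{lower order terms},
\]
and since $\lambda\ne 0$ and $K$ has characteristic zero, $p^m-q^n$ is nonzero of degree exactly $mn-m-n+1=(m-1)(n-1)$. The whole argument then reduces to showing that, whenever $n\mid m$ or $m\mid n$, the difference $p^m-q^n$ is either identically $0$ or of degree strictly greater than $(m-1)(n-1)$; both possibilities contradict the displayed identity.

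First I would treat the case $n\mid m$. Write $m=dn$ with $d\in\mathds{N}$. Since $p$ is monic of degree $n$, the polynomial $p^d$ is monic of degree $m$, hence $p^d-q$ is either $0$ or a nonzero polynomial (so of degree $\ge 0$). From the factorization
\[
p^m-q^n=(p^d)^n-q^n=(p^d-q)\sum_{j=0}^{n-1}(p^d)^{\,n-1-j}q^{\,j}
\]
I observe that each summand $(p^d)^{n-1-j}q^{j}$ is monic of degree $m(n-1)$, so the cofactor $\sum_{j=0}^{n-1}(p^d)^{n-1-j}q^{j}$ has leading coefficient $n$ and degree exactly $m(n-1)$. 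Therefore, either $p^d=q$, in which case $p^m-q^n=0$, or $\deg(p^m-q^n)=\deg(p^d-q)+m(n-1)\ge m(n-1)=mn-m$. As $n>1$ gives $mn-m>mn-m-n+1=(m-1)(n-1)$, in both cases $\deg(p^m-q^n)\ne(m-1)(n-1)$, which is the desired contradiction. (The case $n=m$, where $n\mid m$ and $m\mid n$ hold simultaneously and $d=1$, is included here.)

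The case $m\mid n$ is handled symmetrically: writing $n=dm$ one factors $p^m-q^n=p^m-(q^d)^m=(p-q^d)\sum_{j=0}^{m-1}p^{\,m-1-j}(q^d)^{\,j}$, whose second factor is, by the same reasoning, of degree $n(m-1)$ with leading coefficient $m$; hence either $p^m-q^n=0$ or $\deg(p^m-q^n)\ge n(m-1)=mn-n>(m-1)(n-1)$, again contradicting condition~(5). I do not expect a real obstacle once the problem is phrased via Proposition~\ref{proposicion}(5); the only point requiring a little care is that the leading coefficient of the cofactor $\sum_j(p^d)^{n-1-j}q^{j}$ — a sum of $n$ monic polynomials of the same degree — equals $n$, which is nonzero precisely because $K$ has characteristic zero. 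A slightly more computational alternative would bypass~(5) and differentiate $p^m-q^n$ directly using \eqref{equacionprincipal}, but the factorization argument above seems the shortest.
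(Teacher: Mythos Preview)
Your argument is correct. The paper takes a different route that stays with equation~\eqref{equacionprincipal} rather than passing to condition~(5): assuming $m=nk$, it sets $\ov q:=q-p^k$, observes that $mp'\ov q-np\ov q'=mp'q-npq'=\wt\lambda$ (because $mp'p^k-np(p^k)'=0$), and then computes the leading term of the left side to be $n(m-r)a_r x^{n+r-1}$, where $r:=\deg\ov q\in[0,m)$ and $a_r$ is the leading coefficient of $\ov q$; this forces $n+r-1=0$, contradicting $n>1$. Your proof instead invokes the equivalence $(4)\Leftrightarrow(5)$ of Proposition~\ref{proposicion} and bounds $\deg(p^m-q^n)$ from below via the factorization $(p^d-q)\sum_j(p^d)^{n-1-j}q^j$. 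The two are comparably short; the paper's version is self-contained (no appeal to the earlier equivalence), while yours makes the obstruction---a degree mismatch in $p^m-q^n$---more visible. One small caveat worth noting: Proposition~\ref{proposicion} sits in a section where the systems $S(n,m,\cdot)$ carry the standing hypothesis $n\nmid m$ and $m\nmid n$, but you only use the implication $(4)\Rightarrow(5)$, whose proof in the paper makes no use of that hypothesis, so the appeal is legitimate.
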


\begin{proof} Assume that $mp'q-npq' = \wt{\lambda}$ and $m=nk$ with $k\in \mathds{N}$.
Set $\ov{q}:=q-p^k$. Then
$$
\ov{q}(x)=a_r x^r+\text{lower degree terms}\qquad\text{for some $0\le r<m$.}
$$
On one hand
$$
mp'\ov{q}-np\ov{q}'=\wt{\lambda},
$$
but, on the other hand the leading term of $mp'\ov{q}-np\ov{q}'$ is $n a_r x^{n+r-1}(m-r)$.
Hence $n+r-1=0$, which contradicts $n>1$ and $r\ge 0$.
\end{proof}

\begin{proposition}\label{zannier}
If $m\nmid n$ and $n\nmid m$, then the system $S(n,m,\lambda)$ has at least one solution.
\end{proposition}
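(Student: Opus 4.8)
The plan is to deduce the assertion from the equivalences of Proposition~\ref{proposicion} together with the theorem of Zannier announced in the introduction. Set $\wt{\lambda}:=n\lambda(1-m-n)\in K^{\times}$. By the implication $(5)\Rightarrow(2)$ of Proposition~\ref{proposicion}, it is enough to produce monic polynomials $p\in K[x]$ of degree $n$, with vanishing coefficient of $x^{n-1}$, and $q\in K[x]$ of degree $m$, such that
\[
p^m-q^n = n\lambda\, x^{mn-m-n+1}+\text{lower order terms}
\]
(equivalently, by condition $(4)$, such that $mp'q-npq'=\wt{\lambda}$). The exponent $mn-m-n+1$ is exactly the lower bound for $\deg(p^m-q^n)$ that the polynomial $abc$-theorem (Mason--Stothers) forces when $p,q$ are coprime and $p^m\neq q^n$: from $\deg(q^n)\le\deg(p)+\deg(q)+\deg(p^m-q^n)-1$ one gets $mn-m-n+1\le\deg(p^m-q^n)$. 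So the task is to exhibit a \emph{coprime} pair attaining this bound.

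The two normalizations cost nothing: if $p_0,q_0$ are monic of degrees $n,m$ with $\deg(p_0^m-q_0^n)=mn-m-n+1$, an affine substitution $x\mapsto sx+a$ keeps both monic, kills the $x^{n-1}$-coefficient of $p_0$ for a suitable $a\in K$, and multiplies the leading coefficient of $p_0^m-q_0^n$ by $s^{\,1-m-n}$; choosing $s$ brings that leading coefficient to $n\lambda$ — the one step that may require replacing $K$ by a finite extension, or fixing a single value of $\lambda$, which is all that is needed downstream. Thus everything reduces to the existence over $K$ of a coprime pair of monic polynomials of degrees $n$ and $m$ realizing the minimal value $mn-m-n+1$ of $\deg(p^m-q^n)$. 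This is precisely what Zannier \cite{Z} provides: under the hypothesis $m\nmid n$, $n\nmid m$ — which is the same as $m/\!\gcd(m,n)>1$ and $n/\!\gcd(m,n)>1$, the exact range not already ruled out by Proposition~\ref{proposicion1} — such pairs exist and can be taken over $\mathbb{Q}$, hence over $K$. Feeding the resulting $p,q$ back through Proposition~\ref{proposicion} yields a Laurent series $c=x+c_{-1}x^{-1}+\cdots\in K((x^{-1}))$ solving $S(n,m,\lambda)$, as desired.

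I would recall, but not reprove, the mechanism behind Zannier's result, since that is where the real content is. Attaining the bound is equivalent to constructing a degree-$mn$ branched cover $\varphi=p^m/q^n\colon\mathbb{P}^1\to\mathbb{P}^1$ ramified only over $\{0,1,\infty\}$, with ramification data $m^n$ over $0$, $n^m$ over $\infty$, and $(m+n-1)\,1^{\,mn-m-n+1}$ over $1$ (the totally ramified point over $1$ being placed at $\infty$ in the source); Riemann--Hurwitz balances, both sides being $2mn-2$, and by the Riemann existence theorem such a cover exists as soon as there are permutations $\sigma_0,\sigma_1,\sigma_\infty\in S_{mn}$ of these cycle types with $\sigma_0\sigma_1\sigma_\infty=1$ generating a transitive subgroup, plus a descent of the cover to $\mathbb{Q}$. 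The main obstacle is exactly this cited input — exhibiting the triple $(\sigma_0,\sigma_1,\sigma_\infty)$, i.e.\ showing Davenport's bound is attained, for \emph{every} admissible $(m,n)$, including the non-coprime pairs $(48,64)$, $(50,75)$, $(56,84)$, $(66,99)$ relevant to the Jacobian conjecture, and over $\mathbb{Q}$ rather than merely over $\mathbb{C}$. Everything else (the two equivalences in Proposition~\ref{proposicion}, the affine normalization, the $abc$-estimate) is routine.
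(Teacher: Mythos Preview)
Your proof is correct and follows essentially the same route as the paper: invoke Zannier's theorem to obtain monic $p,q$ with $\deg(p^m-q^n)=mn-m-n+1$, then normalize by an affine substitution to kill the $x^{n-1}$-coefficient of $p$ and rescale to hit the prescribed leading coefficient, and finally apply the equivalence $(5)\Leftrightarrow(2)$ of Proposition~\ref{proposicion}. Your additional remarks on the Mason--Stothers bound and the Belyi/Riemann-existence mechanism behind Zannier's result are context rather than a different argument, and your caveat about possibly passing to a finite extension of $K$ to extract the $(m+n-1)$-th root is in fact more careful than the paper, which silently writes $\alpha^{m+n-1}=\lambda/\mu$ without comment.
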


\begin{proof}
Set $\mu_1=\mu_2=\dots=\mu_n:=m$ and $\nu_1=\nu_2=\dots=\nu_m:=n$. Clearly
$$
mn=\sum_{i=1}^{m}\mu_i=\sum_{j=1}^{n}\nu_j.
$$
Moreover $\delta:=\gcd(m,n)<m,n$, which implies that
$$
\max\left\{mn\frac{\delta-1}{\delta},mn-m-n+1\right\}=mn-m-n+1.
$$
Hence, by~\cite{Z}*{Theorem 1, page 114} there exist polynomials $F$, $G$ having
$\mu_i$, resp. $\nu_j$  as the sequences of multiplicities
of their roots, satisfying
$$
\deg(F-G)=mn-m-n+1,
$$
and it is evident that we can assume that $F$ and $G$ are monic.
But then $F(x)=p(x)^m$, where $p(x)$ is the product of the linear factors of $F$ and similarly
$G(x)=q(x)^n$ with $q(x)$ monic. Then
$$
p(x)^m-q(x)^n=F-G= n\mu x^{mn-m-n+1}+lower\ order\ terms
$$
for some $\mu\in K^{\times}$.
Using the automorphism of $K[x]$ given by $x\mapsto x-p_{n-1}/n$ we achieve $p_{n-1}=0$.
Hence, the condition~(5) of Proposition~\ref{proposicion} is satisfied, and by that
proposition the pair $(p,q)$ is associated to a solution of $S(n,m,\mu)$.
Let $\alpha\in K^{\times}$ be such that $\alpha^{n+m-1}=\lambda/\mu$. Replacing $p_i$ by $\alpha^{n-i}p_i$ and
$q_i$ by $\alpha^{m-i}q_i$ for all $i$, we obtain a solution of $S(n,m,\lambda)$, as desired.
\end{proof}

By definitions two pairs $(p,q)$ and $(p_1,q_1)$ of monic polynomials in $K[x]$ are
$\infty$-equivalent if there are $a\in K^{\times}$ and $b\in K$ such that
$$
p_1(x) = a^{-\deg(p)} p(a x +b)\qquad\text{and}\qquad q_1(x) = a^{-\deg(q)} q(a x +b).
$$
Theorem~\ref{Jinvertible} and Propositions~\ref{proposicion} and~\ref{proposicion1} show that
$S_t(n,m,Y^{m+n-1})$ has finitely many solutions. This yields an alternative proof
of a result contained in Theorem~4 of~\cite{F}, which says that the
equation~\eqref{equacionprincipal} has only finitely many solutions for fixed $m,n$,
modulo $\infty$-equivalence. In fact we have:

\begin{proposition} Assume that $K$ is algebraically closed and let $m$, $n$ be
positive integers. Then there are only finitely many $\infty$-equivalence classes
of pairs of monic polynomials $p, q \in K[x]$ such that $p$ has degree $n$, $q$ has
degree $m$, and $mp'q-npq'$ is equal to $\wt{\lambda}$ for some $\wt{\lambda}\in K^{\times}$.
\end{proposition}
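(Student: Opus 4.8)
The plan is to reduce, by an $\infty$-equivalence, every pair $(p,q)$ as in the statement to a normalized representative falling under the hypotheses of Proposition~\ref{proposicion} with a fixed parameter, and then to transfer to it the finiteness of the solution set of the standard homogeneous system $S_t(n,m,Y^{m+n-1})$. First I would dispose of the degenerate cases. If $m,n>1$ and $n\mid m$ or $m\mid n$, then by Proposition~\ref{proposicion1} no pair satisfies~\eqref{equacionprincipal}, so there is nothing to prove. If $\min(m,n)=1$, say $n=1$, then after a translation making $p=x$ and writing $q=\sum_i q_ix^i$ the equation $mp'q-npq'=\wt{\lambda}$ becomes $\sum_i(m-i)q_ix^i=\wt{\lambda}$, which forces $q_i=0$ for $1\le i\le m-1$ and $q_0\ne 0$; a further scaling $x\mapsto ax$ then rescales $q_0$, so there is exactly one $\infty$-equivalence class (symmetrically for $m=1$). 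Hence I may assume $m,n>1$, $n\nmid m$ and $m\nmid n$.

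Next I would carry out the normalization. Given $(p,q)$ with $mp'q-npq'=\wt{\lambda}\in K^{\times}$, a translation $x\mapsto x+b$ leaves $\wt{\lambda}$ unchanged and may be chosen so that the coefficient of $x^{n-1}$ in $p$ vanishes; a subsequent scaling $x\mapsto ax$ preserves this and replaces $\wt{\lambda}$ by $a^{1-m-n}\wt{\lambda}$. Since $K$ is algebraically closed and $1-m-n\ne 0$, I can pick $a$ so that $\wt{\lambda}$ becomes $n(1-m-n)$, the value associated in Proposition~\ref{proposicion} with $\lambda=1$. Thus every $\infty$-equivalence class has a representative $(p,q)$ with $p,q$ monic of degrees $n,m$, with $p_{n-1}=0$, and with $mp'q-npq'=n(1-m-n)$; I call these the normalized pairs, and it is enough to prove there are finitely many of them.

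Then I would build a chain of injections. For a normalized pair, Proposition~\ref{proposicion} (implication $(4)\Rightarrow(2)$ with $\lambda=1$) yields a unique solution
$$
c=x+c_{-1}x^{-1}+c_{-2}x^{-2}+\cdots\in K((x^{-1}))
$$
of $S(n,m,1)$ whose associated pair is $(p,q)$; since $c$ recovers $(p,q)$ through $p=c^n$ and $q=\Pi_+(c^m)$, the assignment $(p,q)\mapsto c$ is injective, and by Remark~\ref{re 1.21} so is $c\mapsto(c_{-1},\dots,c_{-m-n+2})$. Setting $C_{-k}:=c_{-k}Y^{k+1}$ and invoking Proposition~\ref{proposicion} (implication $(2)\Rightarrow(1)$) turns $c$ into a solution $C=x+C_{-1}x^{-1}+\cdots\in K[Y]((x^{-1}))$ of $S(n,m,Y^{m+n-1})$, so that the tuple $(C_{-1},\dots,C_{-m-n+2})\in K[Y]^{m+n-2}$ solves the standard system $S_t(n,m,Y^{m+n-1})$, distinct $c$'s producing distinct tuples. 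But $S_t(n,m,Y^{m+n-1})$ has only finitely many solutions: this is recorded in the paragraph following Proposition~\ref{zannier} and follows from Corollary~\ref{Jinvertiblecoro} applied over an algebraic closure of $K(Y)$, where $[P,Q]$ is a nonzero scalar multiple of $Y^{m+n-2}$ and hence there exist $A,B$ with $AP'+BQ'=1$. Running the chain of injections backwards gives finitely many normalized pairs, and therefore finitely many $\infty$-equivalence classes.

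The hard part will be the normalization step: realizing $p_{n-1}=0$ and a prescribed value of $\wt{\lambda}$ simultaneously using only the two-parameter group of $\infty$-equivalences, which is exactly where algebraic closedness of $K$ is used (to extract an $(1-m-n)$-th root). Once the representative lies inside the hypotheses of Proposition~\ref{proposicion} with $\lambda=1$, the rest is merely a transfer of the finiteness already proved for the homogeneous system.
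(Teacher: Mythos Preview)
Your proof is correct and follows essentially the same strategy as the paper: normalize a pair via $\infty$-equivalence to kill $p_{n-1}$ and fix the constant, then identify the normalized pairs with solutions of the homogeneous system and invoke its finiteness. The paper normalizes to $\wt{\lambda}=1$ rather than $\lambda=1$ and cites Theorem~\ref{Jinvertible} together with Propositions~\ref{proposicion} and~\ref{proposicion1} directly, but the argument is the same.

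Two minor remarks. First, your detour through $K[Y]$ is harmless but unnecessary: once you have a normalized pair, Proposition~\ref{proposicion}~(4)$\Leftrightarrow$(2) already places it among the solutions of $S(n,m,1)$ over $D=K$, and Corollary~\ref{Jinvertiblecoro} applies there since $mp'q-npq'\in K^{\times}$ forces $\gcd(p',q')=1$, giving the required $A,B$ with $Ap'+Bq'=1$. Second, your explicit treatment of the cases $\min(m,n)=1$ is a welcome addition; the paper's terse invocation of Propositions~\ref{proposicion} and~\ref{proposicion1} does not literally cover $n=1$ or $m=1$, though the conclusion is of course easy in those cases.
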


\begin{proof} Let $\mathcal{S}$ be the set of pairs $(p,q)$ of monic polynomials in
$K[x]$ of degree $n$ and $m$, respectively, such that
$$
mp'q-npq' = 1\qquad\text{and}\qquad p = x^n + p_{n-2} x^{n-2} + \cdots + p_0.
$$
By Theorem~\ref{Jinvertible} and Propositions~\ref{proposicion} and~\ref{proposicion1} we know
that $\mathcal{S}$ is a finite set. So in order to finish the proof it suffices to
show that if $(\tilde{p},\tilde{q})$ is a pair of monic polynomials in $K[x]$ of
degree $n$ and $m$ respectively such that
$$
m\tilde{p}'\tilde{q}-n\tilde{p}\tilde{q}'  = \wt{\lambda},
$$
where $\wt{\lambda}\in K^{\times}$, then $(\tilde{p},\tilde{q})$ is $\infty$-equivalent
to a pair a $(p,q)\in \mathcal{S}$. But for this it suffices to take
$$
p(x) := a^{-n} \tilde{p}(ax - \tilde{p}_{n-1}/n)\qquad\text{and}\qquad q(x) :=
a^{-n} \tilde{q}(ax - \tilde{p}_{n-1}/n),
$$
where $\tilde{p}_{n-1}$ is the coefficient of $x^{n-1}$ in $\tilde{p}$ and $a\in K$
satisfies $a^{m+n-1} = \wt{\lambda}$.
\end{proof}

Moreover, we have additional information about the set $\mathcal{S}_0$ of solutions
$(p,q)$ of~\eqref{equacionprincipal} satisfying that $p$ and $q$ are monic,
$\deg(p) = n$, $\deg(q) = m$ and the coefficient of $x^{n-1}$ in $p$ is zero. Let
$e:=m+n-1$ and assume that $K$ has a primitive $e$-root of unit. The group
$\mathds{Z}/e\mathds{Z}$ acts on $\mathcal{S}_0$. In fact, if
$(c_{-1},\dots,c_{-k},\dots,c_{-m-n+2})$ is a solution of $S(n,m,\lambda)$ in $K^{m+n-2}$,
then $(c_{-1}u^{2i},\dots,c_{-k}u^{(k+1)i},\dots, c_{-m-n+2}u^{(m+n-1)i})$ is also a
solution of $S(n,m,\lambda)$ in $K^{m+n-2}$, and so we can define
$$
i\cdot (c_{-1},\dots,c_{-k},\dots,c_{-m-n+2}) :=
(c_{-1}u^{2i},\dots,c_{-k}u^{(k+1)i},\dots, c_{-m-n+2}u^{(m+n-1)i}).
$$
One can also check that if $n=2$ and $m=2r+1$, then there are exactly $r+1$ solutions
(all in the same orbit). It is not clear in which cases there are orbits with $m+n-1$
elements. We pose the following questions:
\begin{enumerate}

\smallskip

\item Let $d$ be a divisor of $m+n-1$ and assume $\{ m\pmod d,n\pmod d\}=\{0,1\}$. Does there
exist always an orbit of solutions of $S(n,m,Y^{m+n-1})$ with $\frac{m+n-1}{d}$
elements, such that $C_{-k}=0$ for $k+1\not\equiv 0 \pmod d$?

\smallskip

\item Let $\phi$ be the Euler function. If $\phi(m+n-1)>2$, does there exists an
orbit in the solution set of $S(n,m,Y^{m+n-1})$ with $m+n-1$ elements?

\smallskip

\end{enumerate}

\smallskip

In~\cite{F} the author also considers the equation
\begin{equation}\label{otra de Formanek}
mp'q-npq'=\lambda p
\end{equation}
where $\lambda\in K^{\times}$. This equation is strongly related with equation~\eqref{equacionprincipal} by
the following:
$$
mp'q-npq'=\lambda\Longrightarrow (m+n)p'Q-npQ'=\lambda p,
$$
where $Q:=pq$.

\smallskip

For the rest of the section we will prove the following proposition, which answers partially question~(2) in a particular case:

\begin{proposition}\label{existencia ralo}
Let $d$ be a divisor of $m+n-1$ and let $r:=\gcd(m,n)$. Assume that $d>r$ and that $\{ m\pmod d,n\pmod d\}=\{0,1\}$.
Then there exists always a solution $C$ of $S(n,m,1)$ such that $C_{-k}=0$ for $k+1\not\equiv 0 \pmod d$.
\end{proposition}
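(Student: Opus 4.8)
The plan is to reduce, via Proposition~\ref{proposicion}, the existence of the desired sparse solution to the existence of a pair of ``$d$-sparse'' polynomials satisfying the power identity of condition~(5), and then to produce such a pair by Zannier's theorem on polynomials with prescribed root multiplicities, in the spirit of the proof of Proposition~\ref{zannier}. First I would normalize: since $d>r\ge 1$ we have $d\ge 2$, and $\{m\bmod d,n\bmod d\}=\{0,1\}$ forces exactly one of $m,n$ to be $\equiv 0$ and the other $\equiv 1\pmod d$. Using that $[P,Q]=-[Q,P]$ --- concretely, by Proposition~\ref{proposicion} the swap $(p,q)\leftrightarrow(q,p)$ is a bijection between solution pairs of $S(n,m,\lambda)$ and of $S(m,n,\mu)$, $\mu\in K^\times$, under which the condition ``$d\nmid k+1\Rightarrow C_{-k}=0$'', equivalently ``$C$ involves only exponents $\equiv 1\pmod d$'', is preserved --- I may assume $n=da$ and $m=db+1$ with $a,b\ge 1$; then $m+n-1=d(a+b)$ and $\gcd(d,m)=1$.

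The next step is the dictionary between sparsity of $C$ and the shape of the associated pair $(p,q)$. The sparsity of $C$ is equivalent to $P=C^n$ involving only exponents $\equiv 0\pmod d$; conversely, if $(p,q)$ is a solution pair with $p(x)=\hat p(x^d)$ and $q(x)=x\,\hat q(x^d)$ for monic $\hat p,\hat q$, then $p(x)=x^n(1+O(x^{-d}))$ forces $p^{1/n}=x\cdot(\text{a power series in }x^{-d})$, so the associated $C$ is automatically sparse; moreover $p_{n-1}=0$ holds automatically since $n-1\not\equiv 0\pmod d$. Hence, by the equivalences $(1)\Leftrightarrow(2)\Leftrightarrow(5)$ of Proposition~\ref{proposicion} together with the rescaling argument at the end of the proof of Proposition~\ref{zannier} (which alters only the nonzero coefficients of $p$ and $q$, hence preserves sparsity), it suffices to exhibit monic polynomials $\hat p$ of degree $a$ and $\hat q$ of degree $b$ with
$$\hat p(u)^m-u^a\hat q(u)^n=\kappa\,u^{(m-1)(n-1)/d}+\text{lower-order terms in }u,\qquad\kappa\in K^\times,$$
where $u=x^d$, $x^n=u^a$, and $(m-1)(n-1)/d=b(da-1)$ is an integer.

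To produce $\hat p,\hat q$ I would invoke Zannier's theorem. Put $F:=\hat p(u)^m$ and $G:=u^a\hat q(u)^n$; both have degree $N:=am=a+abd$. Prescribing for $F$ the multiplicity sequence of $a$ copies of $m$ and for $G$ the sequence of one copy of $a$ together with $b$ copies of $n$ (the sums agree, both being $N$), Zannier's theorem \cite{Z}*{Theorem~1, page~114} yields such $F,G$, which may be taken monic and with $\gcd(F,G)=1$, and with $\deg(F-G)=\max\{N\tfrac{\delta-1}{\delta},\,N-a-(b+1)+1\}$ where $\delta=\gcd(m,a,n)$. Now $N-a-(b+1)+1=abd-b=(m-1)(n-1)/d$; and $\delta=r$, since $a\mid n$ gives $\gcd(m,a,n)=\gcd(m,a)$ and $\gcd(d,m)=1$ gives $\gcd(m,a)=\gcd(m,da)=\gcd(m,n)=r$. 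The inequality $N\tfrac{r-1}{r}\le abd-b$ is equivalent to $r(a+b)\le am$; writing $a=ra_1$, $n=rn_1$, $m=rm_1$ (valid since $r$ divides $a$, $n$ and $m$) it becomes $m-1\le n_1(m-r)$, which holds because $n_1\ge 2$ (from $n\nmid m$) and $2(m-r)\ge m$ (from $m\ge 2r$, which follows from $m\nmid n$). So $\deg(F-G)=(m-1)(n-1)/d$. Translating $u$ so that the multiplicity-$a$ root of $G$ lies at $0$ --- which puts no root of $F$ at $0$, as $\gcd(F,G)=1$ --- expresses $F=\hat p(u)^m$ and $G=u^a\hat q(u)^n$ with $\hat p,\hat q$ monic of degrees $a,b$; these are the sought polynomials, with $\kappa$ the leading coefficient of $F-G$.

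The hard part, apart from the bookkeeping of running Propositions~\ref{proposicion} and~\ref{zannier} backwards while tracking the $d$-sparsity (notably the twisting factor $x$ in $q(x)=x\,\hat q(x^d)$ and the multiplicity-$a$ root of $G=q^n$ forced at the origin), is recognizing $(F,G)=(\hat p^m,u^a\hat q^n)$ as an instance of Zannier's theorem whose extremal value of $\deg(F-G)$ is exactly $(m-1)(n-1)/d$. This is precisely where the hypothesis $d>r$ is used, via the identity $\delta=r$ and the inequality $r(a+b)\le am$; the estimate is short but genuinely uses $m\nmid n$ and $n\nmid m$ as well, and is the step most likely to require care.
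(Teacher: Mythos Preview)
Your proposal is correct, but it follows a genuinely different route from the paper's proof.

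The paper argues indirectly: it sets up a reduced system in the $N$ variables $Z_{r-d},\dots,Z_{r-Nd}$, shows via Lemma~\ref{casocero} and Proposition~\ref{le radical} that the zero locus of the $N$ generators $G_1,\dots,G_{N-1},G^{(0)}$ is the origin, and then uses Krull's height theorem to conclude (Corollary~\ref{no pertenece ralo}) that $G^{(0)}\notin\sqrt{(G_1,\dots,G_{N-1})}$. The Nullstellensatz then yields a point where the first $N-1$ equations vanish but $G^{(0)}$ does not, and a rescaling finishes.

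Your argument is instead a direct construction in the spirit of Proposition~\ref{zannier}: translating the sparsity condition into the substitution $u=x^d$, you reduce to producing $\hat p,\hat q$ with $\hat p(u)^m-u^a\hat q(u)^n$ of the correct degree, and you obtain these from Zannier's theorem applied to the multiplicity data $(m,\dots,m)$ and $(a,n,\dots,n)$. This is cleaner and more explicit. A pleasant byproduct is that your argument does not actually use $d>r$: the inequality $r(a+b)\le am$ that you need is equivalent to $(n-r)(m-r)\ge r(r-1)$, which follows from $n,m\ge 2r$ (i.e.\ from $n\nmid m$ and $m\nmid n$) alone. So your claim in the last paragraph that $d>r$ is ``precisely'' used in that estimate is not quite right---the only place you invoke $d>r$ is to get $d\ge 2$ at the outset. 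Two small points worth tightening: the symmetry reduction to $n\equiv 0$, $m\equiv 1\pmod d$ does go through via Proposition~\ref{proposicion}(4), but one should note explicitly that the swapped pair again satisfies $p_{n-1}=0$ because of the sparsity; and the coprimality $\gcd(F,G)=1$ you invoke after Zannier is either part of the standard Belyi-map formulation of~\cite{Z} or can be avoided entirely, since once condition~(5) of Proposition~\ref{proposicion} holds with $\kappa\ne 0$, condition~(6) forces $pq$ to be separable a posteriori.
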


 Let $A_1$ be the polynomial
$K$-algebra $K[Z_{-r-d},Z_{r-2d},Z_{r-3d},\dots]$ in the variables $Z_{r-vd}$, with $v>0$.
Consider the Laurent series
$$
Z := x^r+Z_{r-d}x^{r-d}+Z_{r-2d}x^{r-2d}+\cdots\in A_1((x^{-1})).
$$
Set $N:=(m+n-1)/d$ and assume, without loss of generality, that
$$
m=1\pmod d\qquad\text{and}\qquad n=0\pmod d.
$$
Let $\lambda\in K$ and let $\tilde C\in K((x^{-1}))$ be a solution of $S(n,m,\lambda)$ with $\tilde C_{-k}=0$ for $k+1\not\equiv 0 \pmod d$. If we define $C:=\tilde C^r$, then the coefficients $C_{r-d},\dots,C_{r-Nd}$ of $C$ satisfy the $N$ equations
\begin{equation}
\begin{aligned}
G_k& :=(Z^{n/r})_{-dk} = 0, && \text{for $k=1,\dots, (m-1)/d$,}\\
G_{k+(m-1)/d} & :=(Z^{m/r})_{-dk+1} = 0, &&\text{for $k=1,\dots,n/d-1$,}\\
G_N & :=(Z^{m/r})_{-n+1} + \lambda = 0.
\end{aligned}\label{sistema de ecuaciones modificado}
\end{equation}
(Note that $Z_{r-Nd}$ is the the lowest degree coefficient of $Z$ which appears
in the system. It appears in the equation $(Z^{n/r})_{m-1}=0$ and in the last equation).

\begin{lemma}\label{casocero} Let $d:=\gcd(n,m)$ and $j\in \mathds{N}_0$. Let $P\in x^j K[x]$ be a monic
polynomial of degree~$n$ and let
$$
C = x + C_0 x^0 +  C_{-1} x^{-1} +  C_{-2} x^{-2} + \cdots \in K((x^{-1}))
$$
be such that $C^n = P$. If $(C^m)_{-k}=0$ for $k=1,\dots,n-\max(j,1)$, then $C^d\in K[x]$.
\end{lemma}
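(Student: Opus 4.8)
The plan is to argue by descending induction on the "gap" between the exponent set $\{n, m\}$ and their $\gcd d$, exploiting the same power-series manipulation used throughout Sections~3--4: passing between $C^n=P$ and $C^m$ via the identity $p^m - q^n = (\text{Jacobian-controlled remainder})$, but now in the degenerate situation where the Jacobian vanishes. First I would write $g := \gcd(n,m) = d$ and set $a := n/d$, $b := m/d$, so $\gcd(a,b)=1$. Since $C$ is the (unique up to the stated normalization) $n$-th root of the monic polynomial $P\in x^jK[x]$, the element $E := C^d$ is a well-defined Laurent series with $E = x^d + (\text{lower } \deg_x)$, $E^a = C^n = P \in x^jK[x]$, and the hypothesis $(C^m)_{-k}=0$ for $k = 1,\dots, n-\max(j,1)$ translates into $(E^b)_{-k} = 0$ for the same range of $k$, because $C^m = E^b$. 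Thus it suffices to prove: if $\gcd(a,b)=1$, $E = x^d + \cdots \in K((x^{-1}))$ with $E^a \in x^jK[x]$ of degree $n=ad$, and $(E^b)_{-k}=0$ for $k=1,\dots, n-\max(j,1)$, then $E\in K[x]$.

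The key step is then a Bezout-type argument. Since $\gcd(a,b)=1$, there are integers $u,v\ge 0$ (after adjusting signs and using that $E$ is invertible in $K((x^{-1}))$) with $ua - vb = 1$ or $vb - ua = 1$; hence $E = (E^a)^{\pm u}(E^b)^{\mp v}$ can be expressed through $E^a = P$ and $E^b$. Now $P\in x^jK[x]$ is already a polynomial, and the hypothesis says $E^b$ agrees with a polynomial $Q_0 := \Pi_+(E^b)$ up to terms of $\deg_x \le -(n-\max(j,1)) - 1$, i.e. $E^b = Q_0 + R$ with $\deg_x(R)$ very negative. I would then estimate the $\deg_x$ of the "error" incurred when raising $E^b = Q_0 + R$ to the power $v$ and multiplying by the polynomial $P^u$: because $\deg_x(R)$ is bounded above by $-(n - \max(j,1)) - 1$ while $\deg_x(Q_0) = bd$ and $\deg_x(P) = ad$, the total negative contribution of any term involving $R$ is strictly less than the number of "free" coefficients of $E$ that the hypothesis already pins down, forcing every negative-degree coefficient of $E$ to vanish. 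Equivalently: $E$ equals a polynomial modulo $x^{-(N+1)}K[[x^{-1}]]$ for $N$ large enough that $E$, being an $n$-th root of a degree-$n$ polynomial, can have no nonzero coefficient below degree $-N$; combining these gives $E\in K[x]$, i.e. $C^d\in K[x]$.

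The main obstacle I expect is getting the degree bookkeeping sharp enough — in particular handling the case $j=0$ versus $j\ge 1$ (which is exactly why $\max(j,1)$ appears): when $j\ge 1$ the polynomial $P=E^a$ has low-order vanishing that feeds back through the root extraction and slightly shifts which coefficients of $E$ are a priori constrained, and one must check the Bezout combination still closes the induction in that case. A cleaner route to the same end, which I would try first, is direct induction on $\ell := n - d$ (or on the number of prime factors of $n/d$): if $n = d$ we are done since $C^d = C^n = P$; otherwise pick a prime $\ell \mid a$, replace $n$ by $n/\ell$ by showing $C^{n/\ell}$ is again a polynomial satisfying the hypotheses of the lemma with the same $d$ (using that $(C^m)_{-k}=0$ for a range that only grows as $n$ shrinks, and that $C^{n/\ell}$ is the $\ell$-th root of $P$, hence lies in $x^{j/\ell}K[x]$ if $\ell\mid j$, with a parallel argument when $\ell\nmid j$), and conclude by induction. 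Either way the heart of the matter is the elementary but delicate observation that a Laurent series whose $a$-th and $b$-th powers are "polynomial up to high order" must itself be a polynomial when $\gcd(a,b)=1$.
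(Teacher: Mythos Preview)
Your proposal contains a genuine gap. The crucial sentence is: ``$E$, being an $n$-th root of a degree-$n$ polynomial, can have no nonzero coefficient below degree $-N$.'' This is false. A formal $a$-th root of a polynomial is typically an honest infinite Laurent series; for instance $\sqrt{x^2+1}=x+\tfrac12 x^{-1}-\tfrac18 x^{-3}+\cdots$ has infinitely many negative-degree terms. So knowing that $E$ agrees with some polynomial modulo $x^{-(N+1)}K[[x^{-1}]]$ gives you nothing by itself, no matter how large $N$ is. Your Bezout manipulation produces $E=P^u(Q_0+R)^{-v}$ (or the reciprocal), but $P^uQ_0^{-v}$ is only a rational function, not a polynomial, and its Laurent tail in $K((x^{-1}))$ need not be small. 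Equivalently, from $P^b=(Q_0+R)^a$ you get that the \emph{polynomial} $P^b-Q_0^a$ has $\deg_x\le m(a-1)+\max(j,1)-n-1$, but this bound is in general nonnegative, so you cannot conclude $P^b=Q_0^a$ by degree reasons alone. The inductive variant has the same defect: to pass from $n$ to $n/\ell$ you must show $C^{n/\ell}\in K[x]$, which is exactly an instance of the statement you are trying to prove, and the hypothesis on $(C^m)_{-k}$ does not translate into the needed hypothesis for the smaller exponent.

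What the paper does instead is a short differential argument that you are missing entirely. Write $C^m=Q+F$ with $Q\in K[x]$ and $F\in x^{-1}K[[x^{-1}]]$, and form the Wronskian-type expression $G:=mP'Q-nQ'P$. Using $P=C^n$ and $Q=C^m-F$ one computes directly
\[
G=nF'C^n-mnFC^{n-1}C',
\]
so $\deg_x G\le \max(j,1)-2$. On the other hand $P\in x^jK[x]$ forces $G\in x^{\max(j-1,0)}K[x]$; comparing the two gives $G=0$. Then $(P^m/Q^n)'=0$, hence $P^m=Q^n$, and the standard factorisation in $K[x]$ yields a monic $R$ with $P=R^{n/d}$, so $C^d=R\in K[x]$. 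The point is that the finitely many vanishing conditions on $(C^m)_{-k}$ are precisely what makes $\deg_x F$ small enough to kill $G$; once $G=0$ you get the exact identity $P^m=Q^n$, not merely an approximate one, and the rest is elementary. Your purely algebraic bookkeeping cannot bridge the gap from ``approximate'' to ``exact'' without this derivative trick.
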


\begin{proof} Write $C^m=Q+F$ where $Q\in K[x]$ and $F\in x^{-1}K[[x^{-1}]]$. Since $P\in x^j K[x]$, we have
\begin{equation}\label{eq1}
G := mP'Q-nQ'P\in \begin{cases} x^{j-1}K[x] &\text{if $j>0$,}\\ K[x] &\text{if $j=0$.}\end{cases}
\end{equation}
We claim that $G=0$. Since,
$$
G = m n C^{n-1}C'(C^m-F)-n C^n (m C^{m-1}C'-F')= n F' C^n - mnFC^{n-1}C'
$$
and, by hypothesis, $\deg(F)\le \max(j,1)-n-1$, if $G\ne 0$, then $\deg(G) \le \max(j,1)-2$, which is
impossible by equality~\eqref{eq1}. Thus the claim follows. But then
$$
\left(\frac{P^m}{Q^n}\right)'=\frac{mP^{m-1}P'Q^n-nQ^{n-1}Q' P^m}{Q^{2n}} =\frac{P^{m-1}Q^{n-1}}{Q^{2n}}(mP' Q-nQ'P)=0,
$$
which combined with the fact that $P$ and $Q$ are monic, implies that $Q^n = P^m$. Consequently there exists a monic
polynomial $R$ such that $P = R^{n/d}$, and so $C^d = R \in K[x]$, as desired.
\end{proof}

\begin{proposition}\label{le radical} Let $I$ be the ideal of $K[Z_{r-d},\dots, Z_{r-Nd}]$ generated by
$G_1,\dots,G_{N-1},G^{(0)}$, where $G^{(0)}:=(Z^{m/r})_{1-n}$. Then $\sqrt{I}=
\langle Z_{r-d},\dots, Z_{r-Nd} \rangle$.
\end{proposition}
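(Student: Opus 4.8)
The plan is to prove the two inclusions separately. The inclusion $\sqrt I\subseteq\langle Z_{r-d},\dots,Z_{r-Nd}\rangle$ is immediate: evaluating $Z$ at $Z_{r-d}=\dots=Z_{r-Nd}=0$ gives $Z=x^{r}$, so $Z^{n/r}=x^{n}$ and $Z^{m/r}=x^{m}$, whose coefficients in all the negative degrees occurring in $G_1,\dots,G_{N-1},G^{(0)}$ vanish; thus $I\subseteq\langle Z_{r-d},\dots,Z_{r-Nd}\rangle$, and the latter, being prime, is radical.

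For the reverse inclusion it suffices, by the Nullstellensatz, to show that $G_1,\dots,G_{N-1},G^{(0)}$ have no common zero in $\bar K^{N}$ besides the origin. So set $L:=\bar K$ and let $(C_{r-d},\dots,C_{r-Nd})\in L^{N}$ be such a zero. For each $k$, the equation $(Z^{n/r})_{-dk}=0$ involves the coefficient $Z_{r-Md}$, with $M:=n/d+k$, linearly and with invertible coefficient $n/r$, and involves all other $Z$-coefficients only through those of index $<M$ (this is the analogue of the computation in Remark~\ref{re 1.21}). Hence one may solve these equations, for $k>(m-1)/d$ (so $M>N$), recursively for $C_{r-(N+1)d},C_{r-(N+2)d},\dots\in L$, obtaining a series
\[
C:=x^{r}+C_{r-d}x^{r-d}+C_{r-2d}x^{r-2d}+\cdots\in L((x^{-1}))
\]
with $(C^{n/r})_{-dk}=0$ for \emph{every} $k\ge 1$; since all exponents of $C^{n/r}$ are $\equiv 0\pmod d$, this gives $C^{n/r}\in L[x]$. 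Likewise, the equations inherited from $G_{(m-1)/d+1},\dots,G_{N-1},G^{(0)}$ say that $(C^{m/r})_{1-dk}=0$ for $k=1,\dots,n/d$; since all exponents of $C^{m/r}$ are $\equiv 1\pmod d$, this forces $C^{m/r}=q+f$ with $q:=\Pi_+(C^{m/r})\in L[x]$ monic of degree $m$ and $\deg_x f\le -n$.

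Now put $\tilde C:=C^{1/r}$; this root exists in $L((x^{-1}))$ because the characteristic is $0$ and $C=x^{r}(1+w)$ with $w\in x^{-2}L[[x^{-1}]]$ (recall $d>r$, hence $d\ge 2$). Then $\tilde C=x+\tilde C_{-1}x^{-1}+\cdots$, while $\tilde C^n=C^{n/r}=:p$ is a monic polynomial of degree $n$ and $\tilde C^m=C^{m/r}=q+f$. Letting $j$ denote the order of vanishing of $p$ at $x=0$, we have $p\in x^{j}L[x]$ and $(\tilde C^m)_{-k}=f_{-k}=0$ for $k=1,\dots,n-1$, hence a fortiori for $k=1,\dots,n-\max(j,1)$. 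Thus Lemma~\ref{casocero}, applied over $L$ with its $\gcd(n,m)$ equal to our $r$, yields $\tilde C^r\in L[x]$. But $\tilde C^r=C=x^{r}+\sum_{j\ge 1}C_{r-jd}x^{r-jd}$ has a pole at $x=0$ unless all the $C_{r-jd}$ vanish; being a polynomial, it forces $C_{r-d}=\dots=C_{r-Nd}=0$. Hence $V(I)(\bar K)=\{0\}$, and by the Nullstellensatz $\langle Z_{r-d},\dots,Z_{r-Nd}\rangle\subseteq\sqrt I$.

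The step requiring care is this extension-and-root construction: one must verify that the $N$ given equations really do allow solving for all the remaining coefficients so that $C^{n/r}$ becomes an honest polynomial while $C^{m/r}$ acquires the shape $q+f$ with $\deg_x f\le -n$, and that passing to $\tilde C=C^{1/r}$ puts the data into exactly the form required by Lemma~\ref{casocero}. (The Jacobian-criterion route via Theorem~\ref{Jinvertible} is not available here, since for these homogeneous data the relevant Jacobian vanishes identically, so $AP'+BQ'=1$ cannot hold.) Once this is in place the rest is formal.
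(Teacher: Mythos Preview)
Your argument is correct and follows essentially the same route as the paper: reduce to $V(I)=\{0\}$ via the Nullstellensatz, extend a putative zero to an infinite series $C$ with $C^{n/r}\in L[x]$, pass to the $r$-th root $\tilde C$, and invoke Lemma~\ref{casocero} to force $\tilde C^r\in L[x]$, whence all negative-index coefficients vanish since $d>r$. The only cosmetic differences are that you work explicitly over $\bar K$, spell out the easy inclusion, and apply Lemma~\ref{casocero} with $j$ equal to the order of vanishing of $p$ rather than with $j=0$ as the paper does (both choices work, since the lemma only needs $(\tilde C^m)_{-k}=0$ for $k\le n-1$, which you have).
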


\begin{proof} By the Nullstellensatz it suffices to prove that $V(I) = \{(0,\dots,0)\}$, where $V(I)$ denotes the
Zero-locus of the ideal $I$. So take a solution
$$
c:=(C_{r-d},\dots,C_{r-Nd})\in  K^{N}
$$
of $G_1,\dots,G_{N-1},G^{(0)}$, and set
$$
C := x^r+C_{r-d}x^{r-d}+C_{r-2d}x^{r-2d}+\cdots+C_{r-Nd}x^{r-Nd}\in x^rK[[x^{-d}]].
$$
Clearly
\begin{equation}\label{se anulan}
(C^{n/r})_{-k}=0\quad\text{for $k=1,\dots,m-1$}\qquad\text{and}\qquad
(C^{m/r})_{-k}=0\quad\text{for $k=1,\dots,n-1$.}
\end{equation}
Now, by a similar argument as in Remark~\ref{re 1.21}, there exists
$$
C_{r-Nd-d},C_{r-Nd-2d},C_{r-Nd-3d},\dots\in K,
$$
such that the
$$
\ov{C}:= x^r+ \sum_{k=1}^{\infty}  C_{r-kd}x^{r-kd}\in x^rK[[x^{-d}]]
$$
still satisfies~\eqref{se anulan} and such that the monic $r$-root of $C$,
$$
\tilde C:= x + \tilde C_{1-d}x^{1-d}+ \tilde C_{1-2d}x^{1-2d}+ \tilde C_{1-3d}x^{1-3d}+\cdots \in x K[[x^{-d}]]
$$
is a solution of $S(n,m,0)$. Hence $P:=\tilde C^n$ is a monic polynomial of degree~$n$ and we can apply
Lemma~\ref{casocero} with $j=0$. Hence $\ov{C} = \tilde C^r \in K[x]$ and so, $C_{r-d}=0,\dots,C_{r-Nd}=0$ because $d>r$.
This means that $c = (0,\dots,0)$, as desired.
\end{proof}

\begin{corollary}\label{no pertenece ralo} Let $I_1$ be the ideal of $K[Z_{r-d},\dots, Z_{r-Nd}]$ generated
by $G_1,\dots,G_{N-1}$. Then $G^{(0)}\notin \sqrt{I_1}$.
\end{corollary}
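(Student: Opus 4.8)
The plan is to argue by contradiction, deducing everything from Proposition~\ref{le radical} together with a dimension count. First I would suppose $G^{(0)}\in\sqrt{I_1}$. Since $I=I_1+\langle G^{(0)}\rangle$, this forces $\sqrt{I}=\sqrt{I_1}$: from $I_1\subseteq I$ we get $\sqrt{I_1}\subseteq\sqrt{I}$, and from $I\subseteq\sqrt{I_1}$ we get $\sqrt{I}\subseteq\sqrt{\sqrt{I_1}}=\sqrt{I_1}$. Then Proposition~\ref{le radical} gives $\sqrt{I_1}=\langle Z_{r-d},\dots,Z_{r-Nd}\rangle$, the maximal ideal $\mathfrak m$ of $K[Z_{r-d},\dots,Z_{r-Nd}]$ corresponding to the origin, so in particular
$$
\dim\bigl(K[Z_{r-d},\dots,Z_{r-Nd}]/I_1\bigr)=\dim\bigl(K[Z_{r-d},\dots,Z_{r-Nd}]/\mathfrak m\bigr)=0.
$$

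The second step is to contradict this using Krull's generalized principal ideal theorem. The ring $K[Z_{r-d},\dots,Z_{r-Nd}]$ is a polynomial ring in $N$ variables, $I_1$ is generated by the $N-1$ elements $G_1,\dots,G_{N-1}$, and $I_1$ is a proper ideal because $I_1\subseteq I$ and $\sqrt{I}=\mathfrak m\neq(1)$ by Proposition~\ref{le radical}. Hence every minimal prime over $I_1$ has height at most $N-1$, so coheight at least $N-(N-1)=1$, and therefore $\dim\bigl(K[Z_{r-d},\dots,Z_{r-Nd}]/I_1\bigr)\ge 1$, contradicting the previous display. Consequently $G^{(0)}\notin\sqrt{I_1}$.

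Geometrically the argument just says that the $N-1$ hypersurfaces $G_i=0$ cannot already cut out only the origin in $N$-dimensional affine space, while Proposition~\ref{le radical} asserts that adjoining the single further equation $G^{(0)}=0$ does reduce the common zero locus to the origin. I do not expect any genuine obstacle here: the only points requiring a little care are the exact bookkeeping of variables against generators and the (routine) remark that $I_1$ is proper, and all the substantive content has already been absorbed into Proposition~\ref{le radical}.
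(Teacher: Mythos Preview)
Your proof is correct and follows essentially the same approach as the paper's: assume $G^{(0)}\in\sqrt{I_1}$, deduce via Proposition~\ref{le radical} that $\sqrt{I_1}$ is the maximal ideal $\langle Z_{r-d},\dots,Z_{r-Nd}\rangle$ of height $N$, and reach a contradiction with Krull's height theorem since $I_1$ has only $N-1$ generators. The paper's version is simply terser, stating the height contradiction in one sentence without spelling out the dimension formulation or the verification that $I_1$ is proper.
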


\begin{proof} If we assume that $G^{(0)}\in \sqrt{I_1}$, then by Proposition~\ref{le radical} we have $\sqrt{I_1} =
\langle Z_{r-d},\dots, Z_{r-Nd}\rangle$, which is impossible since $I_1$ is generated by $N-1$ elements and
the height of $\langle Z_{r-d},\dots, Z_{r-Nd}\rangle$ is $N$.
\end{proof}

\begin{proof}[Proof of Proposition~\ref{existencia ralo}] By Corollary~\ref{no pertenece ralo} and the Nullstellensatz,
there exists
$$
C=(C_{r-d},\dots,C_{r-Nd})\in K^N
$$
such that $G_i(C) = 0$ for $1\le i< N$, but $G^{(0)}(C)\ne 0$. Let
$$
\tilde C:= x + \tilde C_{1-d}x^{1-d}+ \tilde C_{1-2d}x^{1-2d}+ \tilde C_{1-3d}x^{1-3d}+\cdots \in x K[[x^{-d}]]
$$
be the monic $r$-root in $x K[[x^{-d}]]$ of the Laurent series $\ov{C}$ determined by $C$ as in Remark~\ref{re 1.21}. Then
$$
(\tilde C_{-1},\dots, \tilde C_{-Nd+1})
$$
is a solution of $S_t(n,m,\lambda)$, where $\lambda:=-G^{(0)}(C)$. Let $\alpha\in K$ be such that $\alpha^{N}=1/\lambda$
and set $\hat C_{1-id}:=\alpha^{i} \tilde C_{1-id}$. It is clear that $\hat C:=(\hat C_{-1},\dots,\hat C_{-Nd+1})$ is a solution
of $S_t(n,m,1)$. As in Remark~\ref{re 1.21}, this determines a solution
$\check{C}$ of $S(n,m,1)$. It is easy to check that $\check{C}_{-k}=0$ for $k+1\not\equiv 0 \pmod d$, as desired.
\end{proof}

\section{A modified system and an example}
In this section we modify the system~\eqref{sistema de ecuaciones} in order to verify
one of the 4 exceptional cases found by Moh in~\cite{M}. The case $(m,n)=(48,64)$ has
been already be verified independently in~\cite{H} and~\cite{GGV2}. We will verify the
case $(m,n)=(50,75)$. Doing this directly using~\eqref{sistema de ecuaciones} amounts
to solving a system of $123$ equations and 123 variables. Due to this we take  an
alternative strategy. The first part of this procedure is similar to the one used
in~\cite{GGV1}*{Section~8}, and is inspired by~\cite{M}. We do not provide proofs for
this first part, since it serves only to verify a known case and to show the usefulness
of systems like~\eqref{sistema de ecuaciones}. Let $A_0$ and $\gamma$ be as in the
discussion above \cite{GGV1}*{Proposition~6.2}. Assume there is a counterexample
$(P_0,Q_0)$ to the Jacobian conjecture with $\deg(P_0)=50$ and $\deg(Q_0)=75$.
Then by~\cite{GGV1}*{Remark~7.10}, we know that $A_0=(5,20)$. Futhermore, using
similar computations as in \cite{GGV1}*{Proposition~8.3}, one can check that
necessarily $\gamma=3$ or $\gamma=2$. Proceeding as in~\cite{GGV1}*{Section~8} we
obtain a pair $(P_1,Q_1)\in K[x,y]$, such that
$$
[P_1,Q_1]=x^2,\quad \deg(P_1)=10\quad\text{and}\quad \deg(Q_1)=15.
$$
If $\gamma=3$, then applying to $(P_1,Q_1)$ first the automorphism $x \mapsto xy^3$,
$y \mapsto y^{-2}$ of $K[x,y,y^{-1}]$, and then the automorphism $x\mapsto x-G$, $y\mapsto y$
for some suitable $G\in K[y,y^{-1}]$, we obtain a pair $(P,Q)\in K[x,y,y^{-1}]$ satisfying:

\begin{enumerate}

\smallskip

\item[(a1)] There exist $\lambda\in K$, $\mu\in K^{\times}$ and $C,F\in K[y,y^{-1}]((x^{-1}))$ such that
$$
P=C^2\qquad\text{and}\qquad Q=C^3+\lambda C^{-1}+F,
$$

\smallskip

\item[(a2)] $[P,Q]=\mu  y^6 (x-G)^2$, for some $G\in K[y,y^{-1}]$,

\smallskip

\item[(a3)] there exists $f_2,f_4,f_6,f_8\in K$ such that
$$
F=F_{-1} x^{-1}+F_{-2} x^{-2}+F_{-3} x^{-3}+\cdots,
$$
with $F_{-1}:=y^7$ and $F_{-2} := f_8 y^8 + f_6 y^6 + f_4 y^4 + f_2 y^2$,

\smallskip

\item[(a4)] $C=x^2+C_0+C_{-1}x^{-1}+\cdots$,

\smallskip

\item[(a5)] $\deg_y(C_{-k})\le k+2$ for all $k\ge 0$,

\smallskip

\item[(a6)] $C_0=c_{0,2} y^2+c_{0,0}+c_{0,-2} y^{-2}+\cdots+ c_{0,-10} y^{-10}$, with $c_{0,-10}\ne 0$.

\smallskip

\end{enumerate}
On the other hand, if $\gamma=2$, then applying to $(P_1,Q_1)$ first the automorphism
$x \mapsto xy^2$, $y \mapsto y^{-3}$ of $K[x,y,y^{-1}]$, and then the automorphism
$x\mapsto x-G$, $y\mapsto y$ for some suitable $G\in K[y,y^{-1}]$, we obtain a pair
$(P,Q)\in K[x,y,y^{-1}]$ satisfying:

\begin{enumerate}

\smallskip

\item[(b1)] There exist $\lambda\in K$, $\mu\in K^{\times}$ and $C,F\in K[y,y^{-1}]((x^{-1}))$ such that
$$
P=C^2\qquad\text{and}\qquad Q=C^3+\lambda C^{-1}+F,
$$

\smallskip

\item[(b2)] $[P,Q]=\mu  y^2 (x-G)^2$, where $G:= g_{-2} y^{-2} + g_{-5} y^{-5}$, with $g_{-2},g_{-5}\in K$,

\smallskip

\item[(b3)] there exist  $f_2,f_{-1},f_{-4},f_{-7},b_1,b_{-2}\in K$ such that
$$
F=F_{-3} x^{-3}+F_{-4} x^{-4}+F_{-5} x^{-5}+\cdots,
$$
with $F_{-3}:=y^3$, $F_{-4} := b_1 y + b_{-2} y^{-2}$ and $F_{-5} :=
f_2 y^2 + f_{-1} y^{-1} + f_{-4} y^{-4} + f_{-7} y^{-7}$,

\smallskip

\item[(b4)] $C=x^3+C_1 x+C_0+C_{-1}x^{-1}+\cdots$,

\smallskip

\item[(b5)] $C_{-1}=c_{-1,1} y+ c_{-1,-2} y^{-2}+\cdots+ c_{-1,-17}y^{-17}+ c_{-1,-20}y^{-20}$,
with $c_{-1,1}\ne 0$,

\smallskip

\item[(b6)] $C_1 = e_{-1} y^{-1} + e_{-4} y^{-4} + e_{-7} y^{-7} + e_{-10} y^{-10}$ and
$e_{-10}\ne 0$ if $C_0= 0$.

\smallskip

\end{enumerate}
We first analyze the case $\gamma=3$. Motivated by (a4), we consider the Laurent series
$$
Z := x^2+Z_0+Z_{-1}x^{-1}+Z_{-2}x^{-2}+\cdots\in K[Z_0,Z_{-1},Z_{-2},\dots]((x^{-1})).
$$
We set
\begin{equation}
\begin{aligned}
E_k:=(Z^2)_{-k} & , && \text{for $k=1,\dots, 5$,}\\
E_{5+k}:=\left( Z^3+\lambda Z^{-1}\right)_{-k} & , &&\text{for $k=1,\dots,3$.}
\end{aligned}
\end{equation}
Explicitly, we have
\begin{align*}
E_1 =& 2 Z_0 Z_{-1} + 2 Z_{-3},\\
E_2 =& Z_{-1}^2 + 2 Z_0 Z_{-2} + 2 Z_{-4},\\
E_3 =& 2 Z_{-1} Z_{-2} + 2 Z_0 Z_{-3} + 2 Z_{-5},\\
E_4 =& Z_{-2}^2 + 2 Z_{-1} Z_{-3} + 2 Z_0 Z_{-4} + 2 Z_{-6},\\
E_5 =& 2 Z_{-2} Z_{-3} + 2 Z_{-1} Z_{-4} + 2 Z_0 Z_{-5} + 2 Z_{-7},\\
E_6 =& 3 Z_0^2 Z_{-1} + 6 Z_{-1} Z_{-2} + 6 Z_0 Z_{-3} + 3 Z_{-5},\\
E_7 =& \lambda+3 Z_0 Z_{-1}^2 + 3 Z_0^2 Z_{-2} + 3 Z_{-2}^2 + 6 Z_{-1} Z_{-3} + 6 Z_0 Z_{-4} +3 Z_{-6}, \\
E_8 =& Z_{-1}^3 + 6 Z_0 Z_{-1} Z_{-2} + 3 Z_0^2 Z_{-3} + 6 Z_{-2} Z_{-3} + 6 Z_{-1} Z_{-4} + 6 Z_0 Z_{-5} + 3 Z_{-7}.
\end{align*}
Note that $Z_{-7}$ is the lowest degree coefficient of $Z$ which appears in the $E_i$'s.
It appears in the term $2 Z_{-7}$ of $E_5$ and in the term $3 Z_{-7}$ of  $E_{8}$. If
$C\!\in\! K[y,y^{-1}]((x^{-1}))$ fulfills (a1)--(a6), then the~8 coefficients
$C_1,C_0,C_{-1},\dots,C_{-7}$, of $C$, satisfy the equations
\begin{equation*}
E_1=\dots= E_5=0,\quad E_{6} =-F_{-1},\quad E_{7}=-F_{-2}\quad\text{and}\quad E_{8}=-F_{-3}.
\end{equation*}
From $E_1=0$, $E_3=0$ and $E_6=-F_{-1}$ we obtain $F_{-1}+3 C_{-1}C_{-2}=0$. Setting
$$
F_{-1}:=-3 C_{-1}C_{-2}
$$
and eliminating in the set of equations
$$
E_2=\dots=E_5 = 0,\quad E_6 = -F_{-1}\quad\text{and}\quad E_7 = -F_{-2},
$$
the variables $C_{-3}$, $C_{-4}$, $C_{-5}$, $C_{-6}$ and $C_{-7}$, we obtain
$$
C_0 (3 C_0 C_{-1}^2 - 3 C_{-2}^2 - 2 \lambda) = 2 C_0 F_{-2}.
$$
But using that  $y^7+3 C_{-1}C_{-2}=0$ and that by (a5) we have $\deg_y(C_{-1})\le 3$ and
$\deg_y(C_{-2})\le 4$, we get $C_{-1}=a y^3$ and $C_{-2}=b y^4$ for some $a,b\in K^{\times}$.
Hence, either $C_0= 0$ or
$$
C_0=\frac{3 C_{-2}^2 +2 F_{-2}+ 2 \lambda}{3 C_{-1}^2}=\frac{2 \lambda}{3 a^2 y^6} + \frac{2 f_2}{3 a^2 y^4}
+ \frac{2 f_4}{3 a^2 y^2} + \frac{2 f_6}{3 a^2}  + \frac{b^2 y^2}{a^2} + \frac{2 f_8 y^2}{3 a^2},
$$
which contradicts that by (a6) we have $c_{0,-10}\ne 0$. This rules out the case~$\gamma=3$.

\smallskip

We now analyze the case $\gamma=2$. Motivated by (b4) we consider the Laurent series
$$
Z := x^3+Z_1 x+Z_0+Z_{-1}x^{-1}+Z_{-2}x^{-2}+\cdots\in K[Z_1,Z_0,Z_{-1},Z_{-2},\dots]((x^{-1})).
$$
We set
\begin{equation}
\begin{aligned}
E_k:=(Z^2)_{-k} & , && \text{for $k=1,\dots, 8$,}\\
E_{8+k}:=\left( Z^3+\lambda Z^{-1}\right)_{-k} & , &&\text{for $k=1,\dots,5$.}
\end{aligned}
\end{equation}
Explicitly we have
\allowdisplaybreaks
\begin{align*}
E_1 =& 2 Z_ 0 Z_ {-1} + 2 Z_ 1 Z_ {-2} + 2 Z_ {-4},\\
E_2 =& (Z_ {-1})^2 + 2 Z_ 0 Z_ {-2} + 2 Z_ 1 Z_ {-3} + 2 Z_ {-5},\\
E_3 =& 2 Z_ {-1} Z_ {-2} + 2 Z_ 0 Z_ {-3} + 2 Z_ 1 Z_ {-4} + 2 Z_ {-6},\\
E_4 =& (Z_ {-2})^2 + 2 Z_ {-1} Z_ {-3} + 2 Z_ 0 Z_ {-4} +  2 Z_ 1 Z_ {-5} + 2 Z_ {-7},\\
E_5 =& 2 Z_ {-2} Z_ {-3} + 2 Z_ {-1} Z_ {-4} + 2 Z_ 0 Z_ {-5} + 2 Z_ 1 Z_ {-6} + 2 Z_ {-8},\\
E_6 =& (Z_ {-3})^2 + 2 Z_ {-2} Z_ {-4} + 2 Z_ {-1} Z_ {-5} +  2 Z_ 0 Z_ {-6} + 2 Z_ 1 Z_ {-7} + 2 Z_ {-9},\\
E_7 =&  2 Z_ {-3} Z_ {-4} + 2 Z_ {-2} Z_ {-5} +  2 Z_ {-1} Z_ {-6} + 2 Z_ 0 Z_ {-7}
+ 2 Z_ 1 Z_ {-8}+2 Z_ {-10},\\
E_8 =& (Z_ {-4})^2 + 2 Z_ {-3} Z_ {-5} + 2 Z_ {-2} Z_ {-6} + 2 Z_ {-1} Z_ {-7} + 2 Z_ 0 Z_ {-8}
+ 2 Z_ 1 Z_ {-9}+ 2 Z_ {-11},\\
E_ 9 =& 3 (Z_ 0)^2 Z_ {-1} + 3 Z_ 1 (Z_ {-1})^2 +  6 Z_ 0 Z_ 1 Z_ {-2} + 3 (Z_ {-2})^2 + 3 (Z_ 1)^2 Z_ {-3}
+ 6 Z_{-1} Z_ {-3}\\
& + 6 Z_ 0 Z_ {-4} + 6 Z_ 1 Z_ {-5} + 3 Z_ {-7},\\
E_ {10} =& 3 Z_ 0 (Z_ {-1})^2 + 3 (Z_ 0)^2 Z_ {-2} + 6 Z_ 1 Z_ {-1} Z_ {-2} + 6 Z_ 0 Z_ 1 Z_ {-3}
+ 6 Z_ {-2} Z_ {-3}+ 3 (Z_ 1)^2 Z_ {-4}\\
  &  + 6 Z_ {-1} Z_ {-4} + 6 Z_ 0 Z_ {-5} + 6 Z_ 1 Z_ {-6} + 3 Z_ {-8},\\
E_ {11} =& \lambda + (Z_ {-1})^3 + 6 Z_ 0 Z_ {-1} Z_ {-2} + 3 Z_ 1 (Z_ {-2})^2 + 3 (Z_ 0)^2 Z_ {-3}
+ 6 Z_ 1 Z_ {-1} Z_ {-3} + 3 (Z_ {-3})^2 \\
& + 6 Z_ 0 Z_ 1 Z_ {-4} + 6 Z_ {-2} Z_ {-4} +  3 (Z_ 1)^2 Z_ {-5} + 6 Z_ {-1} Z_ {-5} + 6 Z_ 0 Z_ {-6}
 +6 Z_ 1 Z_ {-7} + 3 Z_ {-9},\\
E_ {12} =& 3 (Z_ {-1})^2 Z_ {-2} + 3 Z_ 0 (Z_ {-2})^2 + 6 Z_ 0 Z_ {-1} Z_ {-3}
+ 6 Z_ 1 Z_ {-2} Z_ {-3} + 3 (Z_ 0)^2 Z_ {-4} + 6 Z_ 1 Z_ {-1} Z_ {-4}\\
& + 6 Z_ {-3} Z_ {-4} +6 Z_ 0 Z_ 1 Z_ {-5}  + 6 Z_ {-2} Z_ {-5}
+ 3 (Z_ 1)^2 Z_ {-6} + 6 Z_ {-1} Z_ {-6} + 6 Z_ 0 Z_ {-7}\\
& +3 Z_ {-10}+ 6 Z_ 1 Z_ {-8},\\
E_ {13} =& -\lambda Z_ 1  + 3 Z_ {-1} (Z_ {-2})^2 + 3 (Z_ {-1})^2 Z_ {-3}
+ 6 Z_ 0 Z_ {-2} Z_ {-3} + 3 Z_ 1 (Z_ {-3})^2 + 6 Z_ 0 Z_ {-1} Z_ {-4}\\
  & + 6 Z_ 1 Z_ {-2} Z_ {-4} + 3 (Z_ {-4})^2
  + 3 (Z_ 0)^2 Z_ {-5} + 6 Z_ 1 Z_ {-1} Z_ {-5} + 6 Z_ {-3} Z_ {-5} + 6 Z_ 0 Z_ 1 Z_ {-6} \\
  & + 6 Z_ {-2} Z_ {-6} + 3 (Z_ 1)^2 Z_ {-7}
  + 6 Z_ {-1} Z_ {-7} + 6 Z_ 0 Z_ {-8} + 6 Z_ 1 Z_ {-9}+ 3 Z_ {-11}.
\end{align*}
Note that $Z_{-11}$ is the lowest degree coefficient of $Z$ which appears in the $E_i$'s.
It appears in the term $2 Z_{-11}$ of $E_8$ and in the term $3 Z_{-11}$ of  $E_{13}$.
If $C\!\in\! K[y,y^{-1}]((x^{-1}))$ fulfills (b1)--(b6), then the~13 coefficients
$C_1,C_0,C_{-1},\dots,C_{-11}$ of $C$, satisfy the equations
\begin{equation}\label{ecuaciones para el caso 13}
E_1=\dots=E_{10} = 0,\quad E_{11}=-y^3,\quad E_{12}=-F_{-4}\quad\text{and}\quad E_{13}=-F_{-5}.
\end{equation}
First we will prove that $F_{-4}=0$. Assume $F_{-4}\ne 0$. Eliminating in the set of equations
$$
E_1=\dots=E_7 = 0,\quad E_9,E_{10} = 0\quad\text{and}\quad E_{12} =-F_{-4}
$$
the variables $C_{0}$, $C_{1}$, $C_{-3}$, $C_{-5}$, $C_{-6}$, $C_{-7}$, $C_{-8}$ and $C_{-9}$, we obtain
$$
C_{-1}^2 C_{-4} = C_{-2}^3\quad\text{and}\quad 2 F_{-4} = 3 C_{-1}^2 C_{-2}.
$$
Since $F_{-4}= b_1 y + b_{-2} y^{-2}$ and $C_{-1}\in yK[y^{-3}]$ by (b5), necessarily
$C_{-1}$ is homogeneous, and so $C_{-1}=c_{-1,1} y$. For the sake of simplicity we
write $a:=c_{-1,1}$. We set $F_{-3}=y^3$,  $C_{-4} := C_{-2}^3/C_{-1}^2$  and
$C_{-2} := 2 F_{-4}/3 C_{-1}^2$, and in the set of equations
$$
E_1=\dots=E_7 = 0,\quad E_9 = 0,\quad E_{10} = 0,\quad E_{11}=-F_{-3}\quad\text{and}\quad E_{12}=-F_{-4}
$$
we eliminate the variables $C_{1}$, $C_{-3}$, $C_{-5}$, $C_{-6}$, $C_{-7}$, $C_{-8}$,
$C_{-9}$ and $C_{-10}$. This yields
$$
864 F_{-4}^2 \lambda = -\frac{256 F_{-4}^6}{a^{10} y^{10}} +
\frac{864 C_0 F_{-4}^3}{a y} - 864 F_{-4}^2 y^3 + 432 a^3 F_{-4}^2 y^3 - 729 a^8 C_0^2 y^8,
$$
from which we deduce
$$
(27 a^9 C_0 y^9 - 16 F_{-4}^3)^2=432 a^{10} F_{-4}^2 y^{10} (-2 \lambda + (-2 + a^3) y^3).
$$
This implies that $-2 \lambda + (-2 + a^3) y^3$ is a square in $K((y^{-1}))$, which is only possible if
\begin{equation}\label{para a3}
a^3=2.
\end{equation}
Now we compute
$$
[P,Q]=[P,F]=[x^6,F_{-3} x^{-3}]+[x^6,F_{-4} x^{-4}]+[x^6,F_{-5} x^{-5}]+[2 C_1 x^4,F_{-3} x^{-3}].
$$
Using this, (b2) and the expressions for $F_{-3}$, $F_{-4}$, $F_{-5}$ $C_1$ and
$G$ given in (b2), (b3) and (b6), we obtain
$$
6 b_1 + 36 g_{-2} - \frac{12 b_{-2}}{y^3} + \frac{36 g_{-5}}{y^3}=0
$$
and
$$
-\frac{18 g_{-5}^2}{y^8} - \frac{36 e_{-10}}{y^8} - \frac{42 f_{-7}}{y^8} -
\frac{36 g_{-2} g_{-5}}{y^5} - \frac{ 18 e_{-7}}{y^5} - \frac{24 f_{-4}}{y^5} -
\frac{18 g_{-2}^2}{y^2} - \frac{6 f_{-1}}{y^2} + 18 e_{-1} y + 12 f_2 y=0.
$$
Hence
\begin{align*}
&f_2 = -\frac{3 e_{-1}}{ 2},\quad f_{-1} = -3 g_{-2}^2,\quad  f_{-4} =
-\frac 34 (2 g_{-2} g_{-5} + e_{-7}),\quad f_{-7} = -\frac 37 (g_{-5}^2 + 2 e_{-10}),\\
&b_1 = -6 g_{-2},\quad b_{-2} = 3 g_{-5}.
\end{align*}
Now eliminating from the system~\eqref{ecuaciones para el caso 13} all variables except $C_{-1}$, we obtain
\begin{align*}
R_0 :=& C_{-1}^{10} (3 C_{-1}^9 - 36 C_{-1}^2 F_{-5}^2 + 18 C_{-1}^6 F_{-3} -
96 F_{-3}^3 - 6 C_{-1}^6 \lambda - 48 C_{-1}^3 F_{-3} \lambda -
      96 F_{-3}^2 \lambda)\\
& - (C_{-1}^6 F_{-5} F_{-4}^2 (-48 C_{-1}^3 - 96 F_{-3}) + F_{-4}^4 (16 C_{-1}^6
+ 64 C_{-1}^3 F_{-3} + 64 F_{-3}^2))=0,
\end{align*}
and eliminating from the same system all variables except $C_{-1}$ and $C_{1}$, we obtain among others
$$
R_1 := 4 F_{-4}^2 - C_{-1}^3 (3 C_1 C_{-1}^3 + 12 F_{-5} + 12 C_1 F_{-3})=0.
$$
Equating to zero the coefficients of $R_0$ and $R_1$, and taking into
account~\eqref{para a3}, we obtain the system of equations:
\allowdisplaybreaks
\begin{align*}
0=& a^3-2\\
0=&-\frac 37 (-12 (7 + a^3) g_{-5}^2 + a^3 (4 + 7 a^3) e_{-10}),\\ 0 =&
-3 (-6 (-8 + a^3) g_{-2} g_{-5} + a^3 (1 + a^3) e_{-7}),\\ 0 =& -3 (4 + a^3) (-12 g_{-2}^2 + a^3 e_{-4}),\\
0 =& -3 a^3 (-2 + a^3) e_{-1},\\
0=& -\frac{324}{49} ((28 + 14 a^3 + a^6) g_{-5}^2 + 2 a^6 e_{-10})^2,\\
0 =& -\frac{162}{7} (2 (-32 - 16 a^3 + a^6) g_{-2} g_{-5} + a^6 e_{-7})
((28 + 14 a^3 + a^6) g_{-5}^2 + 2 a^6 e_{-10}), \\
0 =& -\frac{81}{28}(28 a^6 (-32 - 16 a^3 + a^6) g_{-2} g_{-5} e_{-7} + 7 a^12 e_{-7}^2\\
& +4 g_{-2}^2 (3 (3584 + 3584 a^3 + 864 a^6 - 16 a^9 + 5 a^12) g_{-5}^2 + 16 a^6 (4 + a^3)^2 e_{-10})),\\
0 =& -\frac{162}{7} (14 (4 + a^3)^2 (-32 - 16 a^3 + a^6) g_{-2}^3 g_{-5} + 7 a^6 (4 + a^3)^2 g_{-2}^2 e_{-7} \\
&+2 a^6 e_{-1} ((28 + 14 a^3 + a^6) g_{-5}^2 + 2 a^6 e_{-10})),\\
0 =& -81 (4 (4 + a^3)^4 g_{-2}^4 + 2 a^6 (-32 - 16 a^3 + a^6) g_{-2} g_{-5} e_{-1}
+ a^{12} e_{-1} e_{-7}),\\ 0 =& -324 a^6 (4 + a^3)^2 g_{-2}^2 e_{-1},\\
0 =& -3 a^{10} (27 a^2 e_{-1}^2 + 32 \lambda + 16 a^3 \lambda + 2 a^6 \lambda),\\
0 =& 3 a^{10} (-32 + 6 a^6 + a^9).
\end{align*}
Eliminating in this system the variables $a$, $e_{-10}$, $e_{-7}$, $e_{-4}$, $e_{-1}$
and $\lambda$, we obtain $g_{-2}^5 = 0$ and $g_{-5}^4 = 0$. So, $F_{-4}=
\frac{3 g_{-5}}{y^2} - 6 g_{-2} y=0$, as desired.

Now, eliminating from the set of equations $E_1=\dots=E_{10}=0$, $E_{12}=0$ all variables
except $C_0$ and $C_{-1}$, we obtain $C_0 C_{-1}^4 = 0$  (hence $C_0=0$), and eliminating
from the set of equations $E_1=\dots=E_{10}=0$, $E_{11}+F_{-3}=0$ and $E_{12}=0$ all
variables except $C_1$ and $C_{-1}$, we obtain among others
$$
8 C_{-1}^2 F_{-3} = C_{-1}^2 (-3 C_1^2 C_{-1}^2 + 4 C_{-1}^3 - 8 \lambda),
$$
which implies that
$$
C_{-1}^2(4C_{-1} - 3C_1^2) = 8(F_{-3} + \lambda)=8(y^3 + \lambda),
$$
because $C_{-1}\ne 0$. Hence $C^{-1}$ is homogeneous, since it belongs to $yK[y^{-3}]$. Write $C_{-1}=ay$. Then
$$
3 a^2 C_1^2 y^2 = -8 \lambda - 8 y^3 + 4 a^3 y^3.
$$
But the right hand side can be only a square in $K((y^{-1}))$ if $a^3=2$, and then $C_1$
is homogeneous with $\deg_y(C_1)=-1$, i.e. $e_{-10}=0$, which contradicts (b6), since
$C_0=0$. This rules out the case~$\gamma=2$.

\begin{remark}
  The formulas in~\cite{HLLN}*{Theorem 1.1} could help in order to obtain explicitly system of equations for $C_i$.
  In the language of~\cite{HLLN} our $C$ is $F^{1/d}$, and our equations come from the case $\mu>e$.
\end{remark}

\begin{remark}
While searching for a counterexample to
the Jacobian Conjecture with $\frac{\deg(P)}{\deg(Q)}=\frac 23$, we often encounter a pair $(P,Q)$ such that
  there exist $\lambda\in K$ and $C,F\in K[y,y^{-1}]((x^{-1}))$ with
$$
P=C^2\qquad\text{and}\qquad Q=C^3+\lambda C^{-1}+F.
$$
(See for example conditions (a1) and (b1) above.)

In particular we have $[P,Q]\in K^{\times}$, $\deg_x(P)=2k$, $\deg_x(Q)=3k$ and
\begin{equation}\label{cubo menos cuadrado}
  \deg_x(P^3-Q^2-2\lambda P)=\deg_x(FC^3)=k+1.
\end{equation}
Note that, since $[P,Q]=P_x Q_y-P_y Q_x\in K^{\times}$, we know that $P'=P_x$ and $Q'=Q_x$ are coprime polynomials in $R[x]=(K[y])[x]$.
Hence, it could be interesting to characterize coprime polynomials $P,Q\in R[x]$ with $P',Q'$ also coprime,
such that $\deg(P^3-Q^2-\lambda P)$ is minimal. For $\lambda=0$ we don't need the condition on $P'$, $Q'$, and we recover the notion of
Davenport-Zannier pairs as in~\cite{APZ}. If $\lambda\ne 0$, the condition on $P'$ and $Q'$ is necessary, as the following example shows:
$$
P = x^{2k} + 2c, \  Q = x^{3k} + 3cx^k,
$$
and then $P^3 - Q^2 - 3c^2P = 2c^3\in R^{\times}$ has degree zero (This example was communicated by Leonid Makar-Limanov).

We could call the resulting pairs non-homogeneous Davenport-Zannier pairs, and for $k=2$ we have the following example with $R=K[y]$:
$$
P=y^2 x^4-2y x^3+x^2+y x-\frac 12\quad\text{and}\quad
Q=y^3 x^6-3y^2 x^5+3y x^4+\left( \frac{3y^2}{2}-1\right)x^3-\frac{9y}{4}x^2+\frac 34 x+\frac{3y}{8}.
$$
Then $[P,Q]=\frac{3y}{8}$ and so $P'=P_x$ and $Q'=Q_x$ are coprime. Moreover,
$$
P^3-Q^2-\frac{3}{16} P=\frac{y^3}{8}x^3+\frac{3y^2}{16}x^2-\frac{1}{64}(2+9y^2)
$$
has degree $3=k+1$.

We finish this article by asking the following questions
\begin{enumerate}
  \item If coprime polynomials $P,Q\in R[x]$ have degree $2k$ and $3k$ respectively,
  and $P'$ and $Q'$ are also coprime, what is the minimal degree of $P^3-Q^2-\lambda P$ for $\lambda\ne 0$? In particular, is this minimal degree equal to $k+1$? (Note that
  we can assume $\lambda=1$).
  \item Are there always pairs with $\deg(P^3-Q^2-\lambda P)=k+1$?
  \item Is it possible to characterize all pairs as above, which we call non homogeneous
  Davenport-Zannier pairs, similarly to the characterization in~\cite{APZ}?    
\end{enumerate}
\end{remark}

\paragraph{Acknowledgment}
We wish to thank Leonid Makar-Limanov for pointing out the result of~\cite{Z}, and for the example above.
\begin{bibdiv}
\begin{biblist}

\bib{A}{book}{
   author={Abhyankar, S. S.},
   title={Lectures on expansion techniques in algebraic geometry},
   series={Tata Institute of Fundamental Research Lectures on Mathematics
   and Physics},
   volume={57},
   note={Notes by Balwant Singh},
   publisher={Tata Institute of Fundamental Research},
   place={Bombay},
   date={1977},
   pages={iv+168},
   review={\MR{542446 (80m:14016)}},
}

\bib{APZ}{book}{
   author={Adrianov, Nikolai M.},
   author={Pakovich, Fedor},
   author={Zvonkin, Alexander K.},
   title={Davenport-Zannier polynomials and dessins d'enfants},
   series={Mathematical Surveys and Monographs},
   volume={249},
   publisher={American Mathematical Society, Providence, RI},
   date={2020},
   pages={xi+187},
   isbn={978-1-4704-5634-4},
   review={\MR{4249449}},
}

\bib{F}{article}{
   author={Formanek, Edward},
   title={Theorems of W. W. Stothers and the Jacobian conjecture in two
   variables},
   journal={Proc. Amer. Math. Soc.},
   volume={139},
   date={2011},
   number={4},
   pages={1137--1140},
   issn={0002-9939},
   review={\MR{2748408 (2011m:14104)}},
   doi={10.1090/S0002-9939-2010-10523-3},
}

\bib{GGV1}{article}{
author={Guccione, Jorge Alberto},
author={Guccione, Juan Jos\'e},
author={Valqui, Christian},
   title={On the shape of possible counterexamples to the Jacobian
   Conjecture},
   journal={J. Algebra},
   volume={471},
   date={2017},
   pages={13--74},
   issn={0021-8693},
}

\bib{GGV2}{article}{
author={Guccione, Jorge Alberto},
author={Guccione, Juan Jos\'e},
author={Valqui, Christian},
   title={A Differential Equation for Polynomials Related to the
Jacobian Conjecture},
   journal={Pro-Mathematica},
   volume={27},
   date={2013},
   pages={83-98},
   issn={1012-3938},
}

\bib{H}{article}{
   author={Heitmann, R},
   title={On the Jacobian conjecture},
   journal={Journal of Pure and Applied Algebra},
   volume={64},
   date={1990},
   pages={35--72},
   issn={0022-4049},
   review={\MR{1055020 (91c :14018)}},
}

\bib{HLLN}{article}{
   author={Hurst, William E.},
   author={Lee, Kyungyong},
   author={Li, Li},
   author={Nasr, George D.},
   title={On the two-dimensional Jacobian conjecture: Magnus' formula
   revisited, I},
   journal={Rocky Mountain J. Math.},
   volume={53},
   date={2023},
   number={3},
   pages={791--806},
   issn={0035-7596},
   review={\MR{4617912}},
   doi={10.1216/rmj.2023.53.791},
}

\bib{K}{article}{
   author={Keller, Ott-Heinrich},
   title={Ganze Cremona-Transformationen},
   language={German},
   journal={Monatsh. Math. Phys.},
   volume={47},
   date={1939},
   number={1},
   pages={299--306},
   issn={0026-9255},
   review={\MR{1550818}},
   doi={10.1007/BF01695502},
}

\bib{M}{article}{
   author={Moh, T. T.},
   title={On the Jacobian conjecture and the configurations of roots},
   journal={J. Reine Angew. Math.},
   volume={340},
   date={1983},
   pages={140--212},
   issn={0075-4102},
   review={\MR{691964 (84m:14018)}},
}

\bib{vdE}{book}{
   author={van den Essen, Arno},
   title={Polynomial automorphisms and the Jacobian conjecture},
   series={Progress in Mathematics},
   volume={190},
   publisher={Birkh\"auser Verlag},
   place={Basel},
   date={2000},
   pages={xviii+329},
   isbn={3-7643-6350-9},
   review={\MR{1790619 (2001j:14082)}},
   doi={10.1007/978-3-0348-8440-2},
}

\bib{Z}{article}{
   author={Zannier, Umberto},
   title={On Davenport's bound for the degree of $f^3-g^2$ and
   Riemann's existence theorem},
   journal={Acta Arith.},
   volume={71},
   date={1995},
   number={2},
   pages={107--137},
   issn={0065-1036},
   review={\MR{1339121 (96k:11029a)}},
}

\end{biblist}
\end{bibdiv}

\end{document}